\newtheorem{theorem}{Theorem}[section]
\newtheorem{lemma}[theorem]{Lemma}
\newtheorem{lem}{Lemma}[section]
\newtheorem{prop}[lem]{Proposition}
\newtheorem{thm}[lem]{Theorem}
\newtheorem{remark}[lem]{Remark}
\numberwithin{equation}{section}
\providecommand{\abs}[1]{\left\vert#1\right\vert}
\providecommand{\norm}[1]{\left\Vert#1\right\Vert}
\providecommand{\sd}[1]{\mathcal{D}_{#1}}
\providecommand{\se}[1]{\mathcal{E}_{#1}}
\providecommand{\ns}[1]{\norm{#1}^2}
\DeclareMathOperator{\diverge}{div}
\providecommand{\norm}[1]{\left\Vert#1\right\Vert}
\def\ls{\lesssim}
\def\dt{\partial_t}
\def\pa{\partial}
\def\RRvert2{\right \vert\! \right\vert}
\def\Lvert3{\left \vert\!\left\vert\!\left\vert}
\def\Rvert3{\right \vert\!\right\vert\!\right\vert}
\def\nab{\nabla}
\def\al{\alpha}
\def\dt{\partial_t}
\def\dtt{ \frac{d}{dt}}
\def\hal{\frac{1}{2}}
\def\ls{\lesssim}
\def\p{\partial}
\def\da{\Delta_{\mathcal{A}}}
\def\naba{\nab_{\mathcal{A}}}
\def\diva{\diverge_{\mathcal{A}}}
\def\a{\mathcal{A}}
\def\f{\mathcal{F}_{2N}}
\def\fj1{\mathcal{J}^{-1}}
\def\x{\mathcal{X}}
\def\y{\mathcal{Y}}
\def\q{q}
\title[Viscous non-resistive MHD systems]{Global well-posedness of an initial-boundary value problem for viscous non-resistive MHD systems}
\author{Zhong Tan}
\address{
School of Mathematical Sciences\\
Xiamen University\\
Xiamen, Fujian 361005, China}
\email[Z. Tan]{ztan85@xmu.edu.cn}
\thanks{Z. Tan was supported by the National Natural Science Foundation of China (11531010)}
\author{Yanjin Wang}
\address{
School of Mathematical Sciences\\
Xiamen University\\
Xiamen, Fujian 361005, China}
\email[Y. J. Wang]{yanjin$\_$wang@xmu.edu.cn}
\thanks{Y. J. Wang was supported by the Fujian Province
Natural Science Funds for Distinguished Young Scholar (2015J06001).}
\subjclass[2010]{35Q30, 76D03, 76N10, 76W05}
\keywords{MHD; Global well-posedness; Compressible; Incompressible.}
\begin{document}

\begin{abstract}
This paper concerns the viscous and non-resistive MHD systems which govern the motion of electrically conducting fluids interacting with magnetic fields. We consider an initial-boundary value problem for both compressible and (nonhomogeneous and homogeneous) incompressible fluids in an infinite flat layer. We prove the global well-posedness of the systems around a uniform magnetic field which is vertical to the layer. Moreover, the solution converges to the steady state at an almost exponential rate as time goes to infinity. Our proof relies on a two-tier energy method for the reformulated systems in Lagrangian coordinates.
\end{abstract}

\maketitle


\section{Introduction}

\subsection{Formulation}

The dynamics of electrically conducting fluids interacting with magnetic fields can be described by the equations of magnetohydrodynamics (MHD) \cite{Ca,Co,LL}. In this paper, we are concerned with the global existence of smooth solutions for the MHD systems with taking into account the viscosity and neglecting the resistivity. For the compressible flow, the MHD system takes the following form:
\begin{equation}\label{ns_euler0c}
\begin{cases}
 \dt \tilde{\rho} +\diverge( \tilde{\rho}\tilde{u})=0& \text{in } \Omega\\
\tilde {\rho}   (\partial_t\tilde{u}  +   \tilde{u}\cdot \nabla \tilde{u} ) -\mu\Delta \tilde{u}-(\mu+\mu')\nabla\diverge \tilde u+\nabla \left(    P (\tilde{\rho} )+\frac{\kappa}{2}|\tilde{B}|^2 \right) =\kappa\tilde{B} \cdot \nabla \tilde{B}  & \text{in } \Omega
\\ \partial_t\tilde{B}  +   \tilde{u} \cdot \nabla \tilde{B}  -\tilde{B} \cdot \nabla \tilde{u}+\tilde{B} \diverge \tilde u=0 & \text{in } \Omega
\\\diverge{\tilde B}=0 &\text{in }\Omega
\\ (\tilde{\rho},\tilde u,   \tilde B)\mid_{t=0}=(\tilde{\rho}_0,\tilde u_0,  \tilde B_0).
\end{cases}
\end{equation}
 Here $\tilde{\rho}(t,y), \tilde u(t,y) $ and $\tilde B(t,y)$ denotes the density, velocity and magnetic field functions, respectively, where time $t\in \mathbb{R}^+$ and position $y\in \Omega$  with $\Omega$ the domain occupied by the fluid.
The pressure $\tilde P  =  P (\tilde{\rho} )>0$ is a function of the density, which is assumed to be smooth and strictly increasing. $\mu $  and  $\mu' $ are the viscosity coefficients satisfying the physical conditions
\begin{equation}\label{visco}
 \mu>0 \text{ and }\mu'+ \frac{2}{3}\mu\ge 0,
\end{equation}
and $\kappa>0$ is the permeability coefficient. If the fluid is incompressible, then the velocity is divergence free and the pressure becomes a new unknown $\tilde p(t,y)$; the MHD system takes the following form:
\begin{equation}\label{ns_euler0ic}
\begin{cases}
 \dt \tilde{\rho} +\tilde{u}\cdot\nabla\tilde{\rho}=0& \text{in } \Omega\\
\tilde {\rho}   (\partial_t\tilde{u}  +   \tilde{u}\cdot \nabla \tilde{u} ) -\mu\Delta \tilde{u}+\nabla \left(\tilde p+\frac{\kappa}{2}|\tilde{B}|^2 \right) =\kappa\tilde{B} \cdot \nabla \tilde{B} & \text{in } \Omega
\\ \partial_t\tilde{B}  +   \tilde{u} \cdot \nabla \tilde{B} -\tilde{B} \cdot \nabla \tilde{u}=0 & \text{in } \Omega
\\\diverge{\tilde u}=\diverge{\tilde B}=0 &\text{in }\Omega
\\ (\tilde{\rho},\tilde u,   \tilde B)\mid_{t=0}=(\tilde{\rho}_0,\tilde u_0,  \tilde B_0).
\end{cases}
\end{equation}
If $\tilde\rho\equiv 1$, then the system \eqref{ns_euler0ic} reduces to the homogeneous one:
\begin{equation}\label{ns_euler0ich}
\begin{cases}
 \partial_t\tilde{u}  +   \tilde{u}\cdot \nabla \tilde{u} -\mu\Delta \tilde{u}+\nabla \left(\tilde p+\frac{\kappa}{2}|\tilde{B}|^2 \right) =\kappa\tilde{B} \cdot \nabla \tilde{B} & \text{in } \Omega
\\ \partial_t\tilde{B}  +   \tilde{u} \cdot \nabla \tilde{B} -\tilde{B} \cdot \nabla \tilde{u}=0 & \text{in } \Omega
\\\diverge{\tilde u}=\diverge{\tilde B}=0 &\text{in }\Omega
\\ ( \tilde u,   \tilde B)\mid_{t=0}=( \tilde u_0,  \tilde B_0).
\end{cases}
\end{equation}

The main difficulty of studying these MHD systems lies in the non-resistivity of the magnetic equation. It is classical that the viscous and resistive homogeneous MHD system has a unique global classical solution \cite{DL,ST}, at least for the small initial data. It is extremely interesting that the inviscid and non-resistive homogeneous MHD system also poses a unique global classical solution around a nonzero uniform magnetic field \cite{BSS}. It is then natural to ask whether the MHD systems with only the viscosity or resistivity admit global classical solutions or develop singularity in finite time. The inviscid and resistive homogeneous 2D MHD system has a global weak solution in $H^1$, but the question if such weak solutions are unique or can be improved to be global classical solutions remains open \cite{CW,LZH}. The global existence of classical solutions to the viscous and non-resistive homogeneous MHD system \eqref{ns_euler0ich} is established only recently around a nonzero uniform magnetic field; we refer to \cite{LXZ,RWXZ,Z,HL} for the 2D case and \cite{XZ,AZ} for the 3D case, and also \cite{LZ,LZT} for a 3D MHD-type system. For the viscous and non-resistive compressible MHD system \eqref{ns_euler0c} the global existence of classical solutions is established recently in \cite{H} for the 2D case. We remark that the analysis in \cite{RWXZ,Z,HL,H} for the 2D case exploited greatly the condition $\diverge \tilde B=0$, while \cite{LXZ,XZ} employed the Lagrangian reformulation of the problem and required the initial magnetic field, $\tilde B_0$, satisfy the following admissible condition:
\begin{equation}
\int_{\mathbb{R}}(\tilde B_0 - e_n)(Z(t,\alpha))dt=0\text{ for all }\alpha\in \mathbb{R}^{n-1}\times\{0\},\ n=2,3
\end{equation}
with $Z(t,\alpha)$ being determined by
\begin{equation} 
\begin{cases}
\displaystyle	\frac{dZ(t,\al)}{dt}=\tilde B(Z(t,\al))
	\\ Z(0,\al)=\al,
\end{cases}
\end{equation}
and such condition was removed in \cite{AZ}.

However, all these global well-posedness results of \eqref{ns_euler0ich} only consider the Cauchy problem;  some of the techniques such as the anisotropic Littlewood-Paley analysis and some crucial integration by parts in spatial variables employed in these papers can not be applied directly to the initial-boundary value problem. In this paper, we prove the global existence of smooth solutions to the systems \eqref{ns_euler0c} and \eqref{ns_euler0ic} in the horizontally infinite flat layer $\Omega=\mathbb{R}^2\times(0,1)$. We impose the usual no-slip condition on the boundary:
\begin{equation}\label{bounder}
\tilde u=0\quad \text{on }\p\Omega:=\mathbb{R}^2\times \{0,1\}.
\end{equation}
Note that the continuity equation and the magnetic equation are hyperbolic and characteristic, and hence no boundary condition needs to be imposed for the density and the magnetic field.

As in \cite{LXZ,XZ,W}, it is more convenient for us to reformulate the systems by using Lagrangian coordinates so as to capture the weak dissipation of the magnetic field. To this end, we assume that there exists an invertible mapping $
\eta_{0} :\Omega \rightarrow
\Omega
$
so that $\partial\Omega=\eta_{0}(\partial\Omega)
$. Define the flow map
$\eta$ as the solution to
\begin{equation}
\begin{cases}
\partial_t\eta(t,x)=\tilde u(t,\eta(t,x))
\\
\eta(0,x)=\eta_{0}(x).
\end{cases}
\end{equation}
We think of Eulerian coordinates as $(t,y)\in \mathbb{R}^+\times\Omega$ with $y =
\eta(t,x)$, whereas we think of Lagrangian coordinates as
$(t,x)\in \mathbb{R}^+\times\Omega$. In order to switch back and
forth from Lagrangian to Eulerian coordinates we assume that $\eta(t,\cdot)$ are invertible and that $\partial\Omega=\eta(t,\partial\Omega)
$ (which follows by $\partial\Omega=\eta_{0}(\partial\Omega)
$ and $\tilde u=0$ on $\p\Omega$).

If $\eta-Id $ is sufficiently small (in an appropriate Sobolev space), then the mapping $\eta$ is a diffeomorphism. For the compressible fluid, we define the Lagrangian unknowns:
\begin{equation}
(\rho,u,B)(t,x)=(\tilde \rho,\tilde u,\tilde B)(t,\eta(t,x)),\quad (t,x)\in \mathbb{R}^+\times\Omega.
\end{equation}
Then the system \eqref{ns_euler0c} and \eqref{bounder} becomes the following system for $(\eta,\rho,u,B)$:
\begin{equation}\label{lagrangianc}
\begin{cases}
\partial_t \eta = u  &
 \text{in }
\Omega
\\
\partial_t \rho +{\rho}\diverge_\a    u =0 & \text{in }
\Omega
 \\ \rho  \partial_t    u   -\mu\Delta_\a u-(\mu+\mu')\nabla_\a \diverge_\a u +\nabla_\a   \left(P(\rho)+\frac{\kappa}{2}| {B}|^2 \right)  =\kappa {B} \cdot \nabla_\a  {B}    & \text{in }
\Omega
 \\ \partial_t B-B\cdot\nabla_\a u+B \diverge_\a u=0& \text{in }
\Omega
\\  \diva B=0& \text{in }
\Omega  \\
 {u}=0 &\text{on }\p\Omega
\\ (\eta,\rho,  u,   B)\mid_{t=0}=( \eta_0,\rho_0, u_0,    B_0).
\end{cases}
\end{equation}
Here $\a=((\nabla\eta)^{-1})^T$ and we have written the differential operators $\nabla_\a, \diverge_\a$, and $\Delta_\a$ with their actions given by
$(\naba f)_i := \a_{ij} \p_j f$, $\diva X := \a_{ij}\p_j X_i $ and $\da f := \diva \naba f$
for appropriate $f$ and $X$. For the incompressible fluid, we define the Lagrangian unknowns:
\begin{equation}
(\rho,u,p,B)(t,x)=(\tilde \rho,\tilde u,\tilde p+\frac{\kappa}{2}|\tilde{B}|^2,\tilde B)(t,\eta(t,x)),\quad (t,x)\in \mathbb{R}^+\times\Omega.
\end{equation}
Note that the continuity equation becomes $\dt\rho=0$, i.e., $\rho(t,x)\equiv   \rho_0(x)(=\tilde \rho_0(\eta_0(x)))$, and hence the density can be regarded as a parameter function in Lagrangian coordinates. Then the system \eqref{ns_euler0ic} and \eqref{bounder} becomes the following system for $(\eta,u,p,B)$:
\begin{equation}\label{lagrangianic}
\begin{cases}
\partial_t\eta=u & \text{in }
\Omega
 \\  \rho_0\partial_t u - \mu\da u+\naba p=\kappa B\cdot\nabla_\a B & \text{in }
\Omega
 \\ \partial_t B-B\cdot\nabla_\a u=0& \text{in }
\Omega
\\ \diva u=\diva B=0& \text{in }
\Omega \\
 {u}=0 &\text{on }\p\Omega
\\ (\eta,  u,   B)\mid_{t=0}=( \eta_0, u_0,    B_0).
\end{cases}
\end{equation}

We now turn to study the equivalently reformulated systems \eqref{lagrangianc} and \eqref{lagrangianic} in Lagrangian coordinates. For the uniform vertical magnetic field $\bar B=(0,0,\bar b)$ with $\bar b\neq 0$, we will prove the global existence of smooth solutions of \eqref{lagrangianc} around the steady state $(\eta,\rho,  u,   B)=(Id,\bar\rho, 0,\bar B)$ with the uniform density $\bar\rho>0$  and \eqref{lagrangianic} around the steady state $(\eta,   u,   B)=(Id,  0,\bar B)$. Our results show that under certain necessary conditions on the initial data, these systems admit a global unique smooth solution for the sufficiently small initial perturbation; moreover, the solution converges to the steady state at an almost exponential rate as time goes to infinity. The global well-posedness and decay of the original systems \eqref{ns_euler0c} and \eqref{ns_euler0ic} with the boundary condition \eqref{bounder} follow by the change of variable, correspondingly.

\subsection{Conserved quantities}

In our global well-posedness of \eqref{lagrangianc} and \eqref{lagrangianic}, it is a very key to find out the conserved quantities. These conserved quantities are known to be very important for the global well-posedness results, see for instance \cite{LXZ,XZ,HL,H,AZ}. They indicate the conditions needed to be imposed on the initial data if we want to show the convergence of the solutions towards the steady states as time goes to infinity in our functional framework.

We first deal with the compressible MHD system \eqref{lagrangianc}. We denote $J={\rm det} (\nabla\eta)$, the Jacobian of the coordinate transformation. First, direct computation yields that
\begin{equation}\label{jequ}
\partial_t J =J\diverge_\a    u ,
\end{equation}
which together with the continuity equation implies
\begin{equation}
\dt(\rho J)=0.
 \end{equation}
Second, applying $J\a^T $ to the magnetic equation, by \eqref{jequ}, we obtain
\begin{align} 
J\a_{ji}\partial_tB_j&=J\a_{ji}B_k\a_{kl}\partial_t(\partial_l\eta_j)-J\a_{ji}B_j \diverge_\a u\nonumber
\\&=-J \partial_t\a_{ji}B_k\a_{kl}\partial_l\eta_j-\dt J\a_{ji}B_j=-JB_j\partial_t\a_{ji}-\dt J\a_{ji}B_j. 
\end{align}
This implies that
\begin{equation}
\partial_t (J\a^T B)=0
\end{equation}
 and hence that
\begin{equation}
\partial_t (J\diva B)= \partial_t \diverge(J\a^T B)=0.
\end{equation}
Here we have used the well-known geometric identity $\partial_j(J\a_{ij})=0$.
Finally,
\begin{equation}
\dt \eta =0\quad\text{on } \p\Omega.
\end{equation}
Since we are interested in showing that $(\eta,\rho,u,B)(t)\rightarrow (Id, \bar \rho,0,\bar B)$ as $t\rightarrow\infty$ in a strong sense. Due to these conservations, we may conclude that
\begin{equation}
\rho J =\bar\rho,\ J\a^T B= \bar B \text{ in }\Omega, \text{ and }\eta=Id \text{ on }\p\Omega.
\end{equation}
Note then that $\diva B =J^{-1}\diverge(J\a^T B)=J^{-1}\diverge\bar B=0$. In turn, to have these we need to assume that the initial data satisfy these conditions; such conditions are necessary for our global well-posedness. We may shift $\eta\rightarrow Id+\eta$, and hence $J={\rm det} (I+\nabla\eta)$, $\a=((I+\nabla\eta)^{-1})^T$, and we rerecord these conserved quantities in the following form:
\begin{equation}\label{imp1}
\rho = \bar\rho J^{-1},\ B = J^{-1}(I+\nabla \eta)\bar B=\bar b J^{-1}(e_3+\p_3 \eta) \text{ in }\Omega, \text{ and }\eta=0 \text{ on }\p\Omega.
\end{equation}
We may refer to \cite{H} for the derivation of the first two identities in \eqref{imp1} in Eulerian coordinates.

Now for the incompressible MHD system \eqref{lagrangianic}, the identity \eqref{jequ} together with the incompressible condition implies
\begin{equation}
\partial_t J =0.
\end{equation}
Again, since we are interested in showing that $(\eta,u,B)(t)\rightarrow (0,0,\bar B)$ as $t\rightarrow\infty$ in a strong sense, we conclude that
\begin{equation}\label{imp2}
J={\rm det} (I+\nabla\eta)=1,\ B =  (I+\nabla \eta)\bar B= \bar b(e_3+\p_3 \eta)  \text{ in }\Omega, \text{ and }\eta=0 \text{ on }\p\Omega.
\end{equation}
We may also refer to \cite{W,LXZ,XZ,AZ} for the first two identities in \eqref{imp2} and \cite{HL} for the derivation in Eulerian coordinates.

\subsection{Reformulation}

The conservation analysis in the previous subsection reveals that in order to have our global well-posedness, the density $\rho$ and the magnetic field $B$ of the compressible fluid and the magnetic field $B$ of the incompressible fluid should have certain relations with the flow map $\eta$. In turn, this motivates us to eliminate them from the systems and then reformulate the systems by using the flow map $\eta$.

We start with the reformulation of the compressible system \eqref{lagrangianc}. We first rewrite the Lorentz force term. Indeed,
\begin{align} 
& {B} \cdot \nabla_\a  {B}-\nabla_\a \left(\frac{| {B}|^2}{2} \right) = {B} \cdot \nabla_\a  {B}-\nabla_\a B_j B_j\nonumber
\\ &\quad = \bar B \cdot\nabla  (B-\bar B)+  \bar B \cdot(\nabla_\a-\nabla) (B-\bar B)+ (B -\bar B ) \cdot\nabla_\a (B-\bar B)\nonumber
\\ &\qquad-\nabla  (B_j-\bar B_j) \bar B_j-(\nabla_\a-\nabla)  (B_j-\bar B_j) \bar B_j -\nabla_\a (B_j-\bar B_j) (B_j-\bar B_j). 
\end{align}
By the second identity in \eqref{imp1}, we obtain
\begin{align} 
&\bar B \cdot\nabla  (B-\bar B)-\nabla  (B_j-\bar B_j) \bar B_j=\bar b\p_3(B-\bar B)-\bar b \nabla(B_3-\bar b)\nonumber
\\&\quad=\bar b^2\left(\p_3( (J^{-1}-1)  e_3+J^{-1}\p_3\eta)-\nabla (J^{-1}-1+J^{-1}\p_3\eta_3 )\right). 
\end{align}
Note that $J^{-1}=1-\diverge \eta+O(|\nabla\eta|^2)$. Then we conclude that
\begin{equation}\label{reb}
  {B} \cdot \nabla_\a  {B}-\nabla_\a \left(\frac{| {B}|^2}{2} \right)
 =  \bar b^2\left( \p_3^2\eta- \p_3\diverge \eta e_3+\nabla\diverge \eta- \nabla\p_3\eta_3\right)+ \mathcal{R}_B^\eta,
\end{equation}
where $\mathcal{R}_B^\eta=O(\nabla\eta\nabla^2\eta)$ is the remainder.
We can also rewrite the pressure term by using the first identity in \eqref{imp1}; indeed, by the Taylor expansion,
\begin{equation}\label{R1}
P(\rho)= P(\bar\rho J^{-1})=P (\bar{\rho} )+P '(\bar{\rho} )\bar\rho (J^{-1}-1)+\int_{\bar{\rho} }^{\bar\rho J^{-1}}(\bar\rho J^{-1}-z)  P ^{\prime\prime}(z)\,dz,
\end{equation}
which implies that
\begin{equation}\label{rebp}
\nabla_\a   P(\rho) =-P '(\bar{\rho} )\bar\rho \nabla \diverge \eta+\mathcal{R}_P^\eta,
\end{equation}
where $\mathcal{R}_P^\eta=O(\nabla\eta\nabla^2\eta)$ is the remainder. By \eqref{reb} and \eqref{rebp}, we can rewrite the momentum equation in  \eqref{lagrangianc} as a parabolic system with a force term induced by the flow map $\eta$, and the system \eqref{lagrangianc} reduces to the following:
\begin{equation}\label{reformulationc}
\begin{cases}
\partial_t\eta=u & \text{in }
\Omega
 \\ \bar \rho J^{-1}\partial_t u -\mu\Delta_\a  {u}-(\mu+\mu')\nabla_\a \diverge_\a u- P '(\bar{\rho} )\bar\rho \nabla \diverge \eta
 &\\\qquad=\kappa\bar b^2\left( \p_3^2\eta- \p_3\diverge \eta e_3+\nabla\diverge \eta- \nabla\p_3\eta_3\right)+ \mathcal{R}^\eta & \text{in }
\Omega \\
u= 0 &\text{on }\p\Omega
\\ (\eta,  u)\mid_{t=0}=( \eta_0, u_0),
\end{cases}
\end{equation}
where the remainder $\mathcal{R}^\eta=\kappa\mathcal{R}_B^\eta-\mathcal{R}_P^\eta=O(\nabla\eta\nabla^2\eta)$.

We now reformulate the incompressible system \eqref{lagrangianic}. By the second identity in \eqref{imp2}, we obtain
\begin{equation}
B\cdot\nabla_\a B=B_j\a_{jk}\partial_kB=\bar{B}_k\partial_k(\bar{B}_m(\delta_{m3}+\partial_m\eta))
 =\bar b^2\p_{3}^2\eta.
\end{equation}
Then the system \eqref{lagrangianic} can be reformulated as a Navier-Stokes system with a force term induced by the flow map $\eta$:
\begin{equation}\label{reformulationic}
\begin{cases}
\partial_t\eta=u & \text{in }
\Omega
 \\\rho_0\partial_t u - \mu\da u+\naba p= \kappa\bar b^2\p_{3}^2\eta& \text{in }
\Omega
\\ \diva u=0 & \text{in }
\Omega \\
u= 0 &\text{on }\p\Omega
\\ (\eta,  u)\mid_{t=0}=( \eta_0, u_0).
\end{cases}
\end{equation}
\begin{remark}
In this paper, we will prove the global well-posedness of the reformulated systems \eqref{reformulationc} and \eqref{reformulationic} around the steady state $(\eta,u)=(0,0)$. With the solution $(\eta,u)$ of \eqref{reformulationc}, defining the density $\rho=\bar\rho\, {\rm det} (I+\nabla\eta) ^{-1}$ and the magnetic field $B = \bar b\,{\rm det} (I+\nabla\eta) ^{-1}(e_3+\p_3 \eta)$, then $(\eta,\rho,u,B)$ solves \eqref{lagrangianic} by imposing the initial conditions that $\rho_0=\bar\rho\, {\rm det} (I+\nabla\eta_0) ^{-1}$ and $B_0= \bar b\,{\rm det} (I+\nabla\eta_0) ^{-1}(e_3+\p_3 \eta_0)$. Similar conclusion holds for the incompressible case.
\end{remark}

\section{Main results}

We take $L^p(\Omega),p\ge 1$ and $H^k(\Omega),k\ge 0$ for the usual $L^p$ and Sobolev spaces on $\Omega$ with norms $\norm{\cdot}_{L^p}$ and $\norm{\cdot}_k$, respectively. We will typically write $H^0 = L^2$. We also introduce the following anisotropic Sobolev norm:
\begin{equation}
\norm{f}_{k,l}:=\sum_{ \al_1+\al_2 \le l}\norm{\p_1^{\al_1}\p_2^{\al_2} f}_k.
\end{equation}

We first state our global well-posedness result for the compressible MHD system \eqref{reformulationc}. For this, we define some energy functionals. For a generic integer $n\ge 3$, we define the energy as
\begin{equation}\label{p_energy_defc}
 \se{n} := \sum_{j=0}^{n}  \ns{\dt^j u}_{2n-2j} +\ns{ \eta}_{2n+1}
\end{equation}
and the dissipation as
\begin{equation}\label{p_dissipation_defc}
 \sd{n} :=   \sum_{j=0}^{n}  \ns{\dt^j u}_{2n-2j+1}
+\ns{\diverge  \eta}_{2n }+\ns{\p_3  \eta}_{2n }+\ns{ \eta}_{2n }.
\end{equation}
We will consider both $n=2N$ and $n=N+2$ for the integer $N\ge 4$. Finally, we define
\begin{equation}\label{G_defc}
\mathcal{G}_{2N}(t) :=\sup_{0 \le r \le t} \se{2N} (r) + \int_0^t \sd{2N} (r) dr + \sup_{0 \le r \le t} (1+r)^{2N-4} \se{N+2} (r).
\end{equation}
Our global well-posedness result of \eqref{reformulationc} is stated as follows.
\begin{theorem}\label{thc}
Let $N\ge 4$ be an integer. Assume that $u_0\in H^{4N}(\Omega)$ and $\eta_0\in H^{4N+1}(\Omega)$ satisfy the appropriate compatibility conditions for the local well-posedness of \eqref{reformulationc} and
\begin{equation}
\eta_0=0\quad\text{on }\p\Omega.
\end{equation}
There exists a constant $\varepsilon_0>0$ such that if $\se{2N} (0) \le \varepsilon_0$, then there exists a global unique solution $(\eta,u)$ solving  \eqref{reformulationc} on $[0,\infty)$. The solution obeys the estimate
\begin{equation}
\mathcal{G}_{2N}(\infty) \ls \se{2N} (0).
\end{equation}
\end{theorem}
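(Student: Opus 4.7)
The plan is to reduce Theorem \ref{thc} to a single a priori estimate of the form
\begin{equation*}
\mathcal{G}_{2N}(T)\ls \se{2N}(0)+\mathcal{G}_{2N}(T)^{3/2}
\end{equation*}
valid on any interval $[0,T]$ on which a local-in-time solution with $\sup_{[0,T]}\se{2N}\le\delta$ has been produced. Once this is in hand, the standard bootstrap/continuation argument, starting from the local well-posedness of the parabolic system \eqref{reformulationc} coupled to the transport-type relation $\dt\eta=u$, yields global existence as soon as $\varepsilon_0$ is taken small enough. The a priori bound itself is the heart of the proof and splits into two tiers: a high tier at regularity level $2N$, which is allowed to grow in time, and a low tier at level $N+2$, which must decay at the polynomial rate $(1+t)^{-(2N-4)}$. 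The composite quantity $\mathcal{G}_{2N}$ in \eqref{G_defc} is designed precisely so that an interpolation between the two tiers delivers the integrable-in-time dissipation needed to close the top-order bound.

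At the linear level, testing the momentum equation in \eqref{reformulationc} against $u=\dt\eta$ and integrating by parts, using $u=0$ and $\eta=0$ on $\p\Omega$, converts the pressure force $-P'(\bar\rho)\bar\rho\,\nabla\diverge\eta$ and the magnetic force $\kappa\bar b^2(\p_3^2\eta-\p_3\diverge\eta\,e_3+\nabla\diverge\eta-\nabla\p_3\eta_3)$ into $\tfrac12\dtt$ of a positive magneto-elastic energy quadratic in $\diverge\eta$, $\p_3\eta_1$, $\p_3\eta_2$, and $\p_1\eta_1+\p_2\eta_2$, while the viscous block produces $\mu\ns{\nabla u}+(\mu+\mu')\ns{\diverge u}$. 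The same computation, applied to $\dt^j\p_1^{\al_1}\p_2^{\al_2}$ of the equations for all $2j+\al_1+\al_2\le 2n$, yields simultaneous control of the temporal/horizontal components of $\se{n}$ and of the $\sum_j\ns{\dt^j u}_{2n-2j+1}$ part of $\sd{n}$.

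The central obstacle, and the reason the geometric reformulation is needed, is the recovery of the remaining full norms $\ns{\eta}_{2n+1}$ in \eqref{p_energy_defc} and the missing dissipation $\ns{\eta}_{2n}$ in \eqref{p_dissipation_defc} from only the tangential information $\ns{\diverge\eta}_{2n}+\ns{\p_3\eta}_{2n}$. This is carried out by freezing time in \eqref{reformulationc} and viewing it as a Lam\'e-type elliptic system for $\eta$ with Dirichlet data $\eta|_{\p\Omega}=0$,
\begin{equation*}
-P'(\bar\rho)\bar\rho\nabla\diverge\eta+\kappa\bar b^2\bigl(\p_3^2\eta-\p_3\diverge\eta\,e_3+\nabla\diverge\eta-\nabla\p_3\eta_3\bigr)=\bar\rho J^{-1}\dt u-\mu\da u-(\mu+\mu')\naba\diva u-\mathcal{R}^\eta,
\end{equation*}
whose right-hand side is already controlled by the previous step. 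Standard elliptic regularity combined with Poincar\'e in $x_3$, which is legitimate precisely because of $\eta|_{\p\Omega}=0$, then recovers the normal-derivative components order by order. The nonlinear prefactor $J^{-1}-1$, the commutators $\naba-\nabla$, $\diva-\diverge$, $\da-\Delta$, and the remainder $\mathcal{R}^\eta=O(\nabla\eta\,\nabla^2\eta)$ are quadratic or higher in $\nabla\eta$ and are controlled by products of the form $\sqrt{\se{2N}}\sd{2N}$ through Sobolev embedding on $\Omega=\mathbb{R}^2\times(0,1)$; the smallness of $\se{2N}$ absorbs them into the linear dissipation.

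Running the same scheme at the lower regularity level $N+2$ produces a differential inequality of the form
\begin{equation*}
\dtt \widetilde{\mathcal{E}}_{N+2}+\sd{N+2}\ls \sqrt{\se{2N}}\,\sd{2N}
\end{equation*}
for a suitable positive-definite modification $\widetilde{\mathcal{E}}_{N+2}\approx\se{N+2}$, together with an interpolation inequality of the type $\se{N+2}\ls \sd{N+2}^{1-\theta}\se{2N}^{\theta}$ made possible by the $2N-(N+2)=N-2$ extra derivatives available in the high tier. Under a bounded $\se{2N}$ this yields $\se{N+2}(t)\ls(1+t)^{-(2N-4)}\se{N+2}(0)$, and plugging this decay back into the high-tier estimate renders the otherwise non-integrable nonlinear contributions summable in time, closing the bound on $\mathcal{G}_{2N}$. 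I expect the delicate point to be the simultaneous closure of the two tiers: the derivative counts $2N$ and $N+2$ must be balanced so that the interpolation exponent $\theta$ is compatible both with the dissipation available at level $N+2$ and with the scaling of $\mathcal{R}^\eta$, which is precisely what forces the weight $(1+t)^{2N-4}$ in \eqref{G_defc}.
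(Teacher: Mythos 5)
Your overall architecture (local existence plus a two-tier a priori bound, basic energy identities for temporal and horizontal derivatives, interpolation between the $N+2$ and $2N$ tiers to get the $(1+t)^{-2N+4}$ decay, and feeding that decay back into the top-order estimate) matches the paper. However, the step on which everything hinges — recovering the vertical derivatives of $\eta$, i.e.\ the full norms $\ns{\eta}_{2n+1}$ in the energy and $\ns{\eta}_{2n}$ in the dissipation from the tangential information — is proposed via a route that fails. The operator you freeze in time and call ``Lam\'e-type,''
\begin{equation*}
L\eta:=-P'(\bar\rho)\bar\rho\,\nabla\diverge\eta+\kappa\bar b^2\bigl(\p_3^2\eta-\p_3\diverge\eta\,e_3+\nabla\diverge\eta-\nabla\p_3\eta_3\bigr),
\end{equation*}
is \emph{not} elliptic: for $\eta=(0,\phi(x_1),0)$ one has $\diverge\eta=0$, $\p_3\eta=0$ and $\eta_3=0$, so $L\eta\equiv 0$ for arbitrary $\phi$. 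The principal symbol is degenerate on horizontal, horizontally divergence-free shears, because the magnetic tension acts only along $e_3$ and the pressure acts only on the divergence. No elliptic estimate can therefore produce $\nabla_\ast\eta_\ast$ beyond $\diverge_\ast\eta_\ast$, let alone the full $H^{2n+1}$ norm of $\eta$; this degeneracy is exactly why the paper's dissipation \eqref{p_dissipation_defc} contains only $\ns{\diverge\eta}_{2n}+\ns{\p_3\eta}_{2n}+\ns{\eta}_{2n}$ rather than $\ns{\eta}_{2n+1}$, and why the troublesome term \eqref{trouble} forces the two-tier method at all. If your elliptic step were valid, that main difficulty would disappear, which is a sign something is off.

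What the paper does instead is structural rather than elliptic. Horizontal derivatives of $\eta$ are obtained from the energy identities themselves (they preserve the boundary condition), supplemented by testing the equation against $\p^\al\eta$ to generate the cross term $\int_\Omega\bar\rho\,\p^\al u\cdot\p^\al\eta$ and the dissipation $\bar{\mathcal{D}}^\sharp_n$. Vertical derivatives are then recovered from an ODE-in-time structure: the third component of the momentum equation reads $(2\mu+\mu')\dt\p_3 q+P'(\bar\rho)\bar\rho\,\p_3 q=g_1$ with $q=-\diverge\eta$, and the first two components read $\mu\,\dt\p_3^2\eta_\ast+\kappa\bar b^2\p_3^2\eta_\ast=g_2$ (using $u=\dt\eta$), where $g_1,g_2$ involve only horizontal derivatives and time derivatives already under control. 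Iterating this $\dt f+f=g$ structure in the vertical derivative count (Proposition \ref{qqestprop}) trades the missing spatial ellipticity for a time derivative and yields $\ns{u}_{2n+1}$, $\ns{\diverge\eta}_{2n}$, $\ns{\p_3\eta}_{2n}$ in the dissipation and $\ns{\eta}_{2n+1}$ in the energy. Genuine elliptic regularity is used only for the Lam\'e system in $u$ (with all $\eta$ terms on the right-hand side), to upgrade the time-differentiated norms of $u$. You would need to replace your elliptic step with an argument of this kind for the proof to close.
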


We now state our global well-posedness result for the incompressible MHD system \eqref{reformulationic}. For this, we define some energy functionals. For a generic integer $n\ge 3$, we define the energy as
\begin{equation}\label{p_energy_def}
 \se{n} := \sum_{j=0}^{n}  \ns{\dt^j u}_{2n-2j} + \sum_{j=0}^{n-1}  \ns{\nabla \dt^j p}_{2n-2j-2}
+\ns{ \eta}_{1,2n }+\ns{ \eta}_{2n }
\end{equation}
and the dissipation as
\begin{equation}\label{p_dissipation_def}
\begin{split}
 \sd{n} :=& \ns{ u}_{1,2n }+ \ns{ u}_{2n }+ \sum_{j=1}^{n}  \ns{\dt^j u}_{2n-2j+1}
+\ns{ \nabla p}_{2n-2 }+ \sum_{j=1}^{n-1}  \ns{\nabla \dt^j p}_{2n-2j-1}
\\&+\ns{\p_3  \eta}_{0,2n }+\ns{ \eta}_{2n }.
\end{split}
\end{equation}
We will consider both $n=2N$ and $n=N+2$ for the integer $N\ge 4$.  We also define
\begin{equation}\label{fff}
\f:=  \ns{\eta}_{4N+1}\text{ and }\mathcal{J}_{2N}:=\ns{u}_{4N+1}+\ns{ \nabla p}_{4N-1}.
\end{equation}
Finally, we define
\begin{align}\label{G_def} 
\mathcal{G}_{2N}(t) :=& \sup_{0 \le r \le t} \se{2N} (r) + \int_0^t \sd{2N} (r) dr + \sup_{0 \le r \le t} (1+r)^{2N-4} \se{N+2} (r)\nonumber
\\&+ \sup_{0 \le r \le t} \f(r)  + \int_0^t \frac{  \mathcal{J}_{2N}(r)}{(1+r)^{1+\vartheta}}dr 
\end{align}
for any fixed $0<\vartheta\le N-3$ (this requires that $N\ge 4$). Our global well-posedness result of \eqref{reformulationic} is stated as follows.
\begin{theorem}\label{thic}
Let $N\ge 4$ be an integer. Let the parameter density function $\rho_0$ be so that $\nabla\rho_0\in H^{4N-1}(\Omega)$ and $0<\underline{\rho}\le \rho_0\le \bar{\rho}<\infty$ for two constants $\underline{\rho}, \bar{\rho}$. Assume that $u_0\in H^{4N}(\Omega)$ and $\eta_0\in H^{4N+1}(\Omega)$ satisfy the appropriate compatibility conditions for the local well-posedness of \eqref{reformulationic} and
\begin{equation}
 {\rm det} (I+\nabla\eta_0)=1\quad \text{in }\Omega,\text{ and }
\eta_0=0\quad\text{on }\p\Omega.
\end{equation}
There exists a constant $\varepsilon_0>0$ such that if $\se{2N} (0) + \f(0) \le \varepsilon_0$, then there exists a global unique solution $(\eta,u,p)$ solving  \eqref{reformulationic} on $[0,\infty)$. The solution obeys the estimate
\begin{equation}
\mathcal{G}_{2N}(\infty) \ls \se{2N} (0) + \f(0).
\end{equation}
\end{theorem}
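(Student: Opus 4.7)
My plan is to establish Theorem~\ref{thic} by a continuity (bootstrap) argument built around a two-tier energy method, in the spirit of Guo--Tice and parallel in structure to the argument needed for \eqref{reformulationc} but with the extra tracking of the very-high regularity quantity $\f$. Local existence with the stated compatibility conditions I would take as given from a standard construction; the substantive work is the a priori estimate that if a solution exists on $[0,T]$ with $\mathcal{G}_{2N}(T)\le\delta$ small, then in fact $\mathcal{G}_{2N}(T)\ls\se{2N}(0)+\f(0)$.

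The geometric structure that makes the scheme run is the basic energy identity obtained by pairing the momentum equation of \eqref{reformulationic} with $u$ and using $\dt\eta=u$: after integration by parts, the magnetic tension $\kappa\bar b^2\p_3^2\eta$ generates $\hal\kappa\bar b^2\dtt\pns{\p_3\eta}{2}$ while the viscosity yields $\mu\pns{\naba u}{2}$, producing schematically
\begin{equation*}
\hal\dtt\left(\int_\Omega\rho_0|u|^2+\kappa\bar b^2|\p_3\eta|^2\right)+\mu\int_\Omega|\naba u|^2 = \text{nonlinear terms}.
\end{equation*}
Applying $\dt^j$ and the horizontal derivatives $\p_1^{\al_1}\p_2^{\al_2}$, which are tangential to the flat layer and preserve the no-slip condition, would then deliver the time-derivative and horizontal-derivative components of $\se{n}$ and $\sd{n}$. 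To recover the full normal regularity $\ns{u}_{2n}$ and $\ns{\nabla p}_{2n-2}$ appearing in $\sd{n}$, I would view the momentum equation together with $\diva u=0$ and $u|_{\p\Omega}=0$ as a perturbed stationary Stokes problem at each time and invoke elliptic regularity; smallness of $\a-I$ and $\nabla\eta$ guarantees the perturbation is controllable. The missing $\eta$-component $\ns{\p_3\eta}_{0,2n}$ of $\sd{n}$ follows by integrating $\p_3\dt\eta=\p_3 u$ using the already-derived control of $u$.

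To close the bootstrap at the top level $n=2N$, the nonlinear terms in every identity should be cubic in $\sqrt{\mathcal{G}_{2N}}$, yielding bounds of the form $\sqrt{\se{2N}}\sd{2N}$ (plus cross products controllable by $\sqrt{\se{N+2}}\sd{2N}$), so smallness absorbs them into the dissipation and gives both $\sup_{[0,T]}\se{2N}$ and $\int_0^T\sd{2N}$ bounded by the initial data. For the low level $n=N+2$, the crucial step is to establish a coercivity $\se{N+2}\ls\sd{N+2}$ up to the one horizontal-derivative gap on $\eta$, which can be closed by interpolating between a lower $\se{N+2}$-type quantity and $\f$; combined with $\dtt\se{N+2}\ls(\text{small})\sd{N+2}$, this yields the polynomial decay $\se{N+2}(t)\ls(1+t)^{-(2N-4)}\se{2N}(0)$.

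The remaining piece is the very-high regularity quantity $\f$. Applying $\nabla^{4N}$ to $\dt\eta=u$ gives $\dtt\f\ls\sqrt{\f\,\mathcal{J}_{2N}}$, so uniform boundedness of $\f$ follows once one shows $\int_0^\infty(1+t)^{-(1+\vartheta)}\mathcal{J}_{2N}(t)\,dt<\infty$; I would prove this by interpolating $\mathcal{J}_{2N}$ between $\sd{2N}$ (which, after the Stokes elliptic regularity step, controls nearly all of $\mathcal{J}_{2N}$ except the top derivative) and $\f$, exploiting the decay of $\se{N+2}$ to produce the $(1+t)^{-(1+\vartheta)}$ weight, the constraint $\vartheta\le N-3$ ensuring enough decay margin. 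The main obstacle I anticipate is exactly this balancing act: the magnetic tension dissipates only $\p_3\eta$ and not the horizontal derivatives of $\eta$, while pressure and the divergence constraint entangle the unknowns, so arranging the interpolation so that $\sup\se{2N}$, $\int\sd{2N}$, the weighted $\se{N+2}$, $\sup\f$, and the weighted $\mathcal{J}_{2N}$ integral all close simultaneously requires a delicate and tightly bookkept chain of inequalities rather than any single decisive step.
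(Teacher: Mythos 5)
Your global architecture (two-tier energies, temporal/tangential energy evolution, elliptic recovery of normal regularity, interpolation for the decay of $\se{N+2}$, time-weighted control of $\mathcal{J}_{2N}$) matches the paper, but two of your key steps fail as stated, and both failures trace back to the same missing device. First, you propose to recover the normal regularity by applying Stokes elliptic theory directly to $(u,p)$. At the stage where you need the dissipation of $u$, $\nabla p$ and $\eta$ \emph{without} time derivatives, the forcing term $\kappa\bar b^2\p_3^2\eta$ on the right-hand side is precisely what is not yet controlled: the energy evolutions only dissipate $u$ and (via the cross term $\int_\Omega \p^\al(\rho_0 u)\cdot\p^\al\eta$) horizontal derivatives of $\eta$, never two normal derivatives of $\eta$. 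The paper's resolution is to write $\p_3^2\eta=\Delta\eta-\Delta_\ast\eta$ and apply the Stokes estimates to $(w,p)$ with $w=u+(\kappa\bar b^2/\mu)\,\eta$, so the right-hand side involves only $\Delta_\ast\eta$ (already controlled) while $\norm{w}^2$ expands into $\norm{u}^2+\norm{\eta}^2$ plus a total time derivative of $\norm{\eta}^2$; this single substitution simultaneously delivers the dissipation of $\eta$, $u$ and $\nabla p$ at each order. Your alternative of ``integrating $\p_3\dt\eta=\p_3 u$'' gives at best a pointwise-in-time bound on $\norm{\p_3\eta}$ by $\int\norm{\p_3 u}$, not the time-integrated dissipation $\int_0^t\ns{\p_3\eta}_{0,2n}$ that $\sd{n}$ requires.

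Second, your mechanism for the boundedness of $\f$ does not close. From $\frac{d}{dt}\f\ls\sqrt{\f\,\mathcal{J}_{2N}}$ one gets $\frac{d}{dt}\sqrt{\f}\ls\sqrt{\mathcal{J}_{2N}}$, and finiteness of $\int_0^\infty(1+t)^{-1-\vartheta}\mathcal{J}_{2N}\,dt$ does not imply $\int_0^\infty\sqrt{\mathcal{J}_{2N}}\,dt<\infty$ (Cauchy--Schwarz against the weight produces the divergent factor $\left(\int(1+t)^{1+\vartheta}dt\right)^{1/2}$). Moreover $\mathcal{J}_{2N}$ contains $\ns{u}_{4N+1}$, which cannot be produced by interpolating between $\sd{2N}$ and $\f$, since $\f$ involves only $\eta$. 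The paper instead runs the $w$-Stokes argument one order higher to obtain the genuinely damped inequality $\frac{d}{dt}\tilde{\mathcal{F}}_{2N}+\f+\mathcal{J}_{2N}\ls\se{2N}+\sd{2N}$; the damping term $\f$ on the left gives boundedness of $\f$ by Gronwall (using $\sup\se{2N}<\infty$ and $\int\sd{2N}<\infty$), and the weighted integrability of $\mathcal{J}_{2N}$ then follows from the same inequality after multiplying by $(1+t)^{-1-\vartheta}$. A smaller but still substantive omission: since no estimate of $\dt^j p$ without spatial derivatives is available, the interactions $\int_\Omega\dt^n p\,\diva(\dt^n u)$ and $\int_\Omega p\,\diverge\eta$ require the explicit divergence structures $F^{2,j}=\diverge Q^{2,j}$ and $\diverge\eta=\diverge\Psi$ with $Q^{2,j}=\Psi=0$ on $\p\Omega$, plus an integration by parts in time at the top temporal order; your blanket claim that all nonlinearities are cubic in $\sqrt{\mathcal{G}_{2N}}$ skips over this.
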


\begin{remark}
Note that in Theorem \ref{thic} there is no any smallness assumption for the initial density; this is same as the inhomogeneous incompressible Navier-Stokes equations, see Lady${\rm\check{z}}$henskaya and Solonnikov \cite{LS} for instance.
Note that Theorem \ref{thic} holds also for the homogeneous case $\rho_0\equiv 1$. In both theorems, the bound of $\mathcal{G}_{2N}(\infty)$ implies that $\se{N+2} (t) \ls (1+t)^{-2N+4} $. Since $N$ may be taken to be arbitrarily large, this decay result can be regarded as an ``almost exponential" decay rate. Moreover, $\eta$ is sufficiently small to guarantee that it is a diffeomorphism for each $t\ge 0$. As such, we may change coordinates to $y\in\Omega$ to produce global-in-time, decaying solutions to the original compressible and incompressible MHD systems.
\end{remark}
\begin{remark}
We remark that our global well-posedness heavily relies on that the vertical component of the steady magnetic field is not vanishing; after this paper is completed, we learn that the global well-posedness of the 2D homogeneous incompressible system \eqref{ns_euler0ich} around a horizontal magnetic field is established in \cite{RXZ} very recently. Hence, it would be very interesting to study the viscous non-resistive MHD systems for the remaining cases; furthermore, the final goal of the project would be to study the initial-boundary value problem in a domain with curved boundary, e.g. a smooth bounded domain, and this is still remarkably challenging.
\end{remark}

\begin{remark}
We remark that it is well-known that for the Navier-Stokes equations in Lagrangian coordinates, one can not include the norm of $\eta$ without time derivatives in either the energy or the dissipation and hence one can not expect that $\eta$ is dissipated or decays in time. Such difference between our MHD systems and the Navier-Stokes equations is due to the presence of the second order terms of $\eta$, induced by the magnetic field, in our setting. 
\end{remark}

\begin{remark} 
It is known that the viscous non-resistive MHD systems resemble the viscoelastic systems \cite{Lin,LLZ,LZ1}; roughly speaking, in Lagrangian coordinates, the term $\pa_3^2\eta$ is replaced with $\Delta\eta$. Hence, due to this stronger dissipation, from the proof of our theorems, one may deduce the global well-posedness and exponential decay of the compressible and incompressible viscoelastic systems; moreover, the regularity index $4N$ may  be relaxed to $2$. One may refer to \cite{HX} for a related study of the incompressible homogeneous viscoelastic system.
\end{remark}
\begin{remark}
In a forthcoming paper, we expect to use the two-tier energy method developed in this paper to show the sharp nonlinear stability of the viscous non-resistive MHD Rayleigh-Taylor problem (the linear analysis for the homogeneous incompressible fluids was developed in \cite{W}.); this is our primary motivation to consider the viscous non-resistive MHD systems.
\end{remark}

Recall that the local well-posedness of the original systems \eqref{ns_euler0c} and \eqref{ns_euler0ic} in Sobolev spaces $H^m$ with $m$ sufficiently large is standard \cite{K}; we refer to \cite{F,CMRR} for the local well-posedness of the homogeneous incompressible system \eqref{ns_euler0ich} with the low regularity in $\mathbb{R}^n$ with $n=2,3$. This may produce the local well-posedness of the reformulated systems \eqref{reformulationc} and \eqref{reformulationic}. One can also directly construct the local smooth solutions to the reformulated systems \eqref{reformulationc} and \eqref{reformulationic}; the local well-posedness of \eqref{reformulationc} is again standard, while for \eqref{reformulationic} we can refer to \cite{GT_lwp} for the construction of local solutions. Therefore, by a continuity argument, to prove Theorems \ref{thc} and \ref{thic} it suffices to derive the a priori estimates, namely, Theorems \ref{Apc} and \ref{Ap}, respectively.

We begin with the explanation of the proof of Theorem \ref{Apc} for \eqref{reformulationc}. The basic strategy in the energy method is to use first the basic energy-dissipation structure of the system to get the estimates of $(\eta,u)$ as well its temporal and horizontal spatial derivatives that preserve the boundary conditions. The next step is then to use the structure of the equations to improve the estimates. First, we separate the third component and the first two components of the momentum equation to discover the ODE structure: $\dt f+f=g$ for $f=\pa_3 \diverge\eta$ and $\pa_3^2 \eta_\ast$, respetively; exploring this ODE energy-dissipation structure and interwinding between vertical derivatives and horizontal derivatives, we can improve the dissipation estimates of $(\eta,u)$ without time derivatives and the energy estimates of $\eta$. Then we will employ the elliptic regularity theory of the Lam$\acute{e}$ system for $u$ to improve the energy and dissipations estimates of $ u$. The conclusion is
\begin{equation}
\dtt \se{n}+\sd{n}\le \mathcal{N}_n,
\end{equation}
where $\mathcal{N}_n$ represents the nonlinear estimates. The remaining is to control the nonlinear estimates, and the basic goal is $\mathcal{N}_n\ls \sqrt{\se{n}}\sd{n}$; this would then close the estimates in a small-energy regime. Unfortunately, there is one of nonlinear estimates can not be controlled in this way:
\begin{equation}\label{trouble}
\int_\Omega   \p^\al \eta \cdot  \p^\al G \text{ when }\alpha\in \mathbb{N}^2 \text{ and }|\alpha|=2n,
\end{equation}
where $G$ is the nonlinear term defined by \eqref{317317}.
Indeed, the difficulty lies in that we can only control $\norm{ \eta }_{2n+1}^2$ by $\se{n}$ rather than $\sd{n}$; this would be harmful for the energy method. Our solution to this problem is to implement the two-tier energy method.  The idea is to employ two tiers of energies and dissipations, $\se{N+2}$, $\sd{N+2}$, $\se{2N}$, and $\sd{2N}$.  We then control the troubling terms in \eqref{trouble} by $\sqrt{\se{N+2}}\se{2N}$ when $n=2N$ and by $\sqrt{\se{2N}}\sd{N+2}$ when $n=N+2$. This leads to
\begin{equation}\label{conclu}
 \frac{d}{dt} \se{2N} + \sd{2N} \ls  \sqrt{\se{N+2}}\se{2N} \text{ and } \frac{d}{dt} \se{N+2} + \sd{N+2} \le 0.
\end{equation}
If $\se{N+2}$ decays at a sufficiently fast rate, then the estimates close. This can be achieved by using the second inequality in \eqref{conclu};
although we do not have that $  \se{N+2}\ls \sd{N+2}$, which rules out the exponential decay, we can use an interpolation argument as \cite{RG,GT_per} to bound $\se{N+2} \ls (\se{2N})^{1-\theta} (\sd{N+2})^{\theta}$ for $\theta=(2N-4)/(2N-3)$.  Plugging this in leads to an algebraic decay estimate for $\se{N+2}$ with the rate $(1+t)^{-2N+4}$. Consequently, this scheme of the a priori estimates of Theorem \ref{Apc} closes by setting $N\ge 4$. Full details of the proof will be carried out in Section \ref{sec com}.

We now explain the strategy of the proof of Theorem \ref{Ap} for \eqref{reformulationic}. As already noticed from the definition of the energy functionals, the situation is a bit more complicated than the compressible case. Again, the first step is to use the basic energy-dissipation structure of the system to get the estimates of $(\eta,u)$ as well its temporal and horizontal spatial derivatives. Note that it is essential to employ the structure of the nonlinear terms of $\diverge u$ and $\diverge \eta$ since we can not get any estimates of the pressure $p$ without spatial derivatives. The next step is to use the structure of the equations to improve the estimates. Note that now the pressure $p$ is a new unknown and unlike the Cauchy problem it is not a quadratic term, and so we do not have the ODE structure as the compressible case. Hence, the only way to improve the estimates is to use the elliptic regularity theory of the Stokes system. However, we can not use the Stokes system for $(u,p)$ since we have not controlled $\pa_3^2\eta$ yet. The crucial observation is that we have certain control of the horizontal derivatives of $\eta$; if we write $\pa_3^2\eta=\Delta\eta-\Delta_\ast\eta$ and consider the Stokes system for $(w,p)$ with the quantity $w=  u+  \kappa\bar b^2/\mu \eta$, then we can deduce the desired dissipation estimates of $(\eta,u,p)$ without time derivatives and the energy estimates of $\eta$. Then we will employ the Stokes system for $(u,p)$ to improve the energy and dissipations estimates of $(u,p)$. The conclusion is
\begin{equation}
\dtt \se{n}+\sd{n}\le \mathcal{N}_n,
\end{equation}
and again the difficulty is that $\mathcal{N}_n\ls \sqrt{\se{n}}\sd{n}$ does not hold. We again implement the two-tier energy method to conclude that
\begin{equation}\label{conclu2}
 \frac{d}{dt} \se{2N} + \sd{2N} \ls  \sqrt{\se{N+2}}(\f+\mathcal{J}_{2N}) \text{ and } \frac{d}{dt} \se{N+2} + \sd{N+2} \le 0.
\end{equation}
The second inequality yields the decay estimate for $\se{N+2}$ with the rate $(1+t)^{-2N+4}$.
The control of $\f $ and $\mathcal{J}_{2N} $ is through the following:
\begin{equation}\label{ine}
 \dtt {\mathcal{F}}_{2N}
+  \f +\mathcal{J}_{2N}  \ls    \se{2N}+ \sd{2N}.
\end{equation}
A time weighted analysis on \eqref{ine} leads to the boundedness of $\f$ and $
  \int_0^t \frac{\mathcal{J}_{2N}}{(1+r)^{1+\vartheta}}dr $ for any $\vartheta>0.$
This together with the decay of $\se{N+2}$ then closes the a priori estimates of Theorem \ref{Ap} by choosing $0<\vartheta\le N-3$ for $N\ge 4$.
Full details of the proof will be carried out in Section \ref{sec incom}.
\smallskip\smallskip

\hspace{-12pt}{\bf Notation.}
We now set the conventions for our notation. The Einstein convention of summing over repeated indices is used. Throughout the paper $C>0$ will denote a generic constant that does not depend on the initial data and time, but can depend on $N$, $\Omega$, the steady states, or any of the parameters of the problem.  We refer to such constants as ``universal.''  They are allowed to change from line to line. We employ the notation $A \ls B$ to mean that $A \le C B$ for a universal constant $C>0$, and we write $\dt A+B\ls D$ for $\dt A+CB\le D$. We will write $\mathbb{N} = \{ 0,1,2,\dotsc\}$ for the collection of non-negative integers.  When using space-time differential multi-indices, we will write $\mathbb{N}^{1+m} = \{ \alpha = (\alpha_0,\alpha_1,\dotsc,\alpha_m) \}$ to emphasize that the $0-$index term is related to temporal derivatives.  For just spatial derivatives we write $\mathbb{N}^m$.  For $\alpha \in \mathbb{N}^{1+m}$ we write $\partial^\alpha = \dt^{\alpha_0} \p_1^{\alpha_1}\cdots \p_m^{\alpha_m}.$ We define the parabolic counting of such multi-indices by writing $\abs{\alpha} = 2 \alpha_0 + \alpha_1 + \cdots + \alpha_m.$  We will write $\nabla_\ast$ for the horizontal gradient, $\diverge_\ast$ for the horizontal divergence and $\Delta_\ast$ for the horizontal Laplace operator. For vector $v=(v_1,v_2,v_3)$, we write $v_\ast=(v_1,v_2)$ for the horizontal components. Finally, for a \textit{given norm} $\norm{\cdot}$ and an integer $k\ge 0$, we introduce the following notation for sums of derivatives:
\begin{equation} \nonumber
\norm{\bar{\nab}^k_0 f}^2 := \sum_{\substack{\alpha \in \mathbb{N}^{1+3}, \abs{\alpha}\le k} } \norm{\pa^\al  f}^2.
\end{equation}

\section{Compressible MHD system}\label{sec com}

In this section, we will derive the a priori energy estimates for the smooth solution $(\eta,u)$ to the compressible MHD system \eqref{reformulationc}. We assume throughout the section that the solution obeys the estimate $\mathcal{G}_{2N}(T) \le \delta$ for sufficiently small $\delta>0$.

\subsection{Energy evolution}\label{sec_horizc}

In this subsection we derive energy evolution estimates for temporal and horizontal spatial derivatives by using the energy-dissipation structure of the system.

\subsubsection{Energy evolution of time derivatives}\label{stable2}

For the temporal derivatives, it is a bit more convenient to use the following geometric formulation. Applying $\dt^j$ for $j=0,\dots,n$ to the system \eqref{reformulationc}, we find that
\begin{equation}\label{linear_geometricc}
\begin{cases}
  \dt (\dt^j \eta) =\dt^j u  & \text{in } \Omega
\\  \bar \rho J^{-1}\dt (\dt^j u) -\mu\Delta_\a  (\dt^j u)-(\mu+\mu')\nabla_\a \diverge_\a  (\dt^j u)-P '(\bar{\rho} )\bar\rho \nabla \diverge (\dt^j \eta)
 &\\\quad=\kappa\bar b^2\left( \p_3^2(\dt^j \eta)- \p_3\diverge (\dt^j \eta) e_3+\nabla\diverge (\dt^j \eta)- \nabla\p_3(\dt^j \eta_3)\right)+ F^{j}  & \text{in }
\Omega \\
\dt^j u  =0 & \text{on } \p\Omega,
\end{cases}
\end{equation}
where for $i=1,2,3,$
\begin{align}\label{F_def_startc} 
 F_i^{j}  = &\dt^j(\mathcal{R}_i^\eta)+\sum_{0 < \ell \le j}  C_j^\ell\left\{ (\mu+\mu')
\mathcal{A}_{ik} \p_k ( \dt^\ell  \mathcal{A}_{lm} \dt^{j-\ell}\p_m u_l)
   +   (\mu+\mu')\dt^\ell \mathcal{A}_{ik}\dt^{j-\ell}\p_k ( \mathcal{A}_{lm}\p_m u_l) \right.\nonumber
   \\&  \left.+  \mu\mathcal{A}_{lk} \p_k ( \dt^\ell  \mathcal{A}_{lm} \dt^{j-\ell}\p_m u_i)
   +   \mu\dt^\ell \mathcal{A}_{lk}\dt^{j-\ell}\p_k ( \mathcal{A}_{lm}\p_m u_i) -\bar \rho \dt^\ell(J^{-1})\dt (\dt^{j-\ell} u_i)\right\}. 
\end{align}

We record the estimates of the nonlinear terms $F^j$ in the following lemma.
\begin{lem}\label{p_F_estimatesc}
For $n\ge 3$, it holds that
\begin{equation}\label{p_F_e_001c}
\ns{ F^j}_{0}   \ls  \se{n} \sd{n}.
\end{equation}
\end{lem}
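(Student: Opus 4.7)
The plan is to bound $\|F^j\|_{L^2}^2\ls\se{n}\sd{n}$ by decomposing each summand of \eqref{F_def_startc} into a finite sum of products and estimating each in $L^2$ by $\sqrt{\se{n}\sd{n}}$, via H\"older's inequality and the Sobolev embedding $H^2(\Omega)\hookrightarrow L^\infty(\Omega)$. Under the a priori assumption $\mathcal{G}_{2N}(T)\le\delta$, the Sobolev embedding yields $\|\nabla\eta\|_{L^\infty}\ll 1$, so $\mathcal{A}=((I+\nabla\eta)^{-1})^T$ and $J^{-1}=\det(I+\nabla\eta)^{-1}$ are smooth bounded functions of $\nabla\eta$ with $\mathcal{A}-I$ and $J^{-1}-1$ vanishing at the origin.

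The first step is an algebraic expansion via the Fa\`a di Bruno formula: for $\ell\ge 1$, each of $\dt^\ell\mathcal{A}$ and $\dt^\ell J^{-1}$ is a finite sum of products
\begin{equation*}
\sum_{r\ge 1}\sum_{\substack{\ell_1+\cdots+\ell_r=\ell\\ \ell_k\ge 1}}P_r(\nabla\eta)\prod_{k=1}^{r}\dt^{\ell_k-1}\nabla u,
\end{equation*}
where $P_r$ is a smooth bounded function of $\nabla\eta$ and we have used $\dt\eta=u$. Substituting this into each of the five commutator-type summands in \eqref{F_def_startc} and distributing the outer spatial derivatives by Leibniz produces, modulo $P(\nabla\eta)$, a finite sum of products of the schematic form $\prod_k\dt^{a_k}\nabla^{c_k}u$ with $c_k\in\{1,2\}$ and $\sum_k a_k=j$; additionally, the $\bar\rho\dt^\ell J^{-1}\dt^{j-\ell+1}u_i$ piece contributes products containing a single factor of $\dt^{j-\ell+1}u$. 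The remainder $\dt^j\mathcal{R}^\eta$, with $\mathcal{R}^\eta=O(\nabla\eta\nabla^2\eta)$, decomposes analogously into products of at least two factors of $\dt^a\nabla^b\eta$ of the same schematic type.

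For each such product I apply H\"older's inequality in $L^2$, placing a single factor in $L^2$ and all remaining factors (together with $P(\nabla\eta)$) in $L^\infty$, with Sobolev embedding reducing each $L^\infty$-estimate to an $H^2$-norm. The definitions \eqref{p_energy_defc}--\eqref{p_dissipation_defc} give $\|\dt^a u\|_{b}\le\sqrt{\se{n}}$ whenever $b\le 2n-2a$ and $\|\dt^a u\|_{b}\le\sqrt{\sd{n}}$ whenever $b\le 2n-2a+1$, with analogous bounds $\|\eta\|_{2n+1}\le\sqrt{\se{n}}$ and $\|\eta\|_{2n}\le\sqrt{\sd{n}}$. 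A direct case-by-case check, using $n\ge 3$ to ensure that the Sobolev embedding has enough slack for every intermediate-order factor, shows that one can always choose the placement so that one factor is bounded in $L^2$ by $\sqrt{\sd{n}}$, a second factor in $L^\infty$ by $\sqrt{\se{n}}$, and any remaining $L^\infty$-factors by $\sqrt{\se{n}}\ll 1$ (absorbed by the smallness of $\delta$). This gives each product an $L^2$-bound of $\lesssim\sqrt{\se{n}\sd{n}}$, and summing the squares over the finitely many products yields $\|F^j\|_0^2\ls\se{n}\sd{n}$.

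The main obstacle is the careful derivative bookkeeping at the extremal cases $\ell\in\{1,j\}$ with $j=n$, where the derivative budget is tightest. For instance, for the term $\bar\rho\,\dt J^{-1}\cdot\dt^n u$ (i.e., $\ell=1$, $j=n$), the factor $\dt J^{-1}$ reduces to a smooth bounded function of $\nabla\eta$ times $\nabla u$; one then places $\nabla u\in L^\infty$ bounded by $\|u\|_3\lesssim\sqrt{\sd{n}}$ (using $3\le 2n+1$) and $\dt^n u\in L^2$ bounded by $\sqrt{\se{n}}$. Symmetric arrangements handle $\ell=j=n$ and the endpoint terms of $\dt^j\mathcal{R}^\eta$, in each case exploiting the small gap in regularity between $\se{n}$ and $\sd{n}$.
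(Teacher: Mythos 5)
Your proposal is correct and follows essentially the same route as the paper: the paper's proof is a terse version of exactly this argument, writing each (at least quadratic) term of $F^j$ as a product $XY$, placing the lower-derivative factor in $L^\infty$ (bounded via Sobolev embedding by $\sqrt{\se{n}}$) and the higher-derivative factor in $L^2$ (bounded by $\sqrt{\sd{n}}$). Your additional detail on the Fa\`a di Bruno expansion of $\dt^\ell\mathcal{A}$, $\dt^\ell J^{-1}$ and the endpoint bookkeeping merely makes explicit what the paper leaves implicit; the occasional swap of which factor carries $\se{n}$ versus $\sd{n}$ is harmless since the product is $\se{n}\sd{n}$ either way.
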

\begin{proof}
Note that all terms in the definitions of $F^{j}$ are at least quadratic; each term can be written in the form $X Y$, where $X$ involves fewer derivative counts than $Y$. We may use the usual Sobolev embeddings along with the definitions of $\se{n}$ and $\sd{n} $ to estimate $\norm{X}_{L^\infty}^2\ls  \se{n} $ and $\norm{Y}_{0}^2\ls   \sd{n} $.
Then $\norm{XY}_0^2\le \norm{X}_{L^\infty}^2\norm{Y}_{0}^2\ls   \se{n} \sd{n} $, and the estimate \eqref{p_F_e_001c} follows.
\end{proof}

For a generic integer $n\ge 3$, we define the temporal energy by
\begin{equation}
 \bar{\mathcal{E}}^{t}_{n} =
 \sum_{j=0}^{n} \left(\bar\rho \norm{\dt^j u}_0^2+P'(\bar\rho)\bar\rho \norm{\dt^j \diverge \eta}_0^2 +\kappa\bar b^2 \norm{\dt^j\p_3\eta_\ast}_0^2
  +\kappa\bar b^2 \norm{\dt^j\diverge_\ast\eta_\ast}_0^2
 \right)
\end{equation}
and the temporal dissipation by
\begin{equation}
 \bar{\mathcal{D}}_n^{t} = \sum_{j=0}^{ n}  \norm{\dt^j u}_1^2.
\end{equation}
Then we have the following energy evolution.
\begin{prop}\label{i_temporal_evolution  Nc}
For $n\ge 3$, it holds that
\begin{equation} \label{i_te_0c}
  \frac{d}{dt} \bar{\mathcal{E}}_{n}^{t}
+ \bar{\mathcal{D}}_{n}^{t}
  \ls   \sqrt{\se{n} } \sd{ n}.
\end{equation}
\end{prop}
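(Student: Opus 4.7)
The plan is to test the $j$-th time-differentiated momentum equation in \eqref{linear_geometricc} against $\dt^j u$, integrate over $\Omega$, and sum over $j=0,\dots,n$. Each term on the left-hand side should produce either a time derivative of a piece of $\bar{\mathcal{E}}_n^t$ or a piece of the dissipation $\bar{\mathcal{D}}_n^t$, with everything else bounded by $\sqrt{\se{n}}\sd{n}$.

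First I would handle the inertial term
\[\int_\Omega\bar\rho J^{-1}\dt(\dt^j u)\cdot\dt^j u = \tfrac12\dtt\int_\Omega\bar\rho J^{-1}|\dt^j u|^2 -\tfrac12\int_\Omega\bar\rho\,\dt(J^{-1})|\dt^j u|^2.\]
The weighted $L^2$ norm is equivalent to $\bar\rho\|\dt^j u\|_0^2$ since $J^{-1}=1+O(\na\eta)$ with $\|\na\eta\|_{L^\infty}$ small (Sobolev, $n\ge 3$), and using $\dt J^{-1}=-J^{-1}\diva u$ together with $\|u\|_{W^{1,\infty}}\ls \sqrt{\se{n}}$ the commutator is $\ls\sqrt{\se{n}}\sd{n}$. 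For the viscous terms I would integrate by parts (boundary terms vanish by the no-slip condition on $\dt^j u$); the geometric identity $\p_j(J\a_{ij})=0$ reduces them to $\mu\ns{\na_\a \dt^j u}_0+(\mu+\mu')\ns{\diva \dt^j u}_0$ modulo cubic commutators. Expanding $\a=I+O(\na\eta)$ replaces $\na_\a,\diva$ by $\na,\diverge$ at the cost of $\sqrt{\se{n}}\sd{n}$, and Korn--Poincar\'e then delivers the full dissipation $\ns{\dt^j u}_1$.

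The pressure term $-\int P'(\bar\rho)\bar\rho\,\na\diverge(\dt^j\eta)\cdot\dt^j u$ integrates by parts to $P'(\bar\rho)\bar\rho\int\diverge(\dt^j\eta)\diverge(\dt^j u)$, which by $\dt(\dt^j\eta)=\dt^j u$ is exactly $\tfrac12\dtt P'(\bar\rho)\bar\rho\ns{\diverge\dt^j\eta}_0$. The four magnetic contributions are the heart of the computation: setting $v=\dt^j\eta$ and $\phi=\dt^j u=\dt v$, with $v=0$ and $\phi=0$ on $\p\Omega$, integration by parts gives
\begin{equation*}
\int_\Omega\bigl(\p_3^2 v - \p_3\diverge v\,e_3 + \na\diverge v - \na\p_3 v_3\bigr)\cdot\phi
= -\int\p_3 v\cdot\p_3\phi + \int\diverge v\,\p_3\phi_3 - \int\diverge v\,\diverge\phi + \int \p_3 v_3\,\diverge\phi.
\end{equation*}
Decomposing $v=(v_\ast,v_3)$ and $\diverge\phi=\diverge_\ast\phi_\ast+\p_3\phi_3$ and cancelling, the right-hand side collapses to $-\int\p_3 v_\ast\cdot\p_3\phi_\ast-\int\diverge_\ast v_\ast\,\diverge_\ast\phi_\ast$, which by $\phi_\ast=\dt v_\ast$ equals $-\tfrac12\dtt\bigl(\ns{\p_3\dt^j\eta_\ast}_0+\ns{\diverge_\ast\dt^j\eta_\ast}_0\bigr)$. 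This cancellation --- the main technical obstacle --- is precisely what explains why $\bar{\mathcal{E}}_n^t$ records only the \emph{horizontal} components $\eta_\ast$: the uniform vertical field $\bar B$ degenerates on $\eta_3$.

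Finally, the forcing is handled by Cauchy--Schwarz and Lemma \ref{p_F_estimatesc}: $\int F^j\cdot\dt^j u\le\|F^j\|_0\|\dt^j u\|_0\ls\sqrt{\se{n}\sd{n}}\cdot\sqrt{\sd{n}}=\sqrt{\se{n}}\sd{n}$. Summing over $j=0,\dots,n$ and collecting the above estimates produces \eqref{i_te_0c}. All residual geometric errors coming from $\a-I$ and $J^{-1}-1$ factors are of the structural form $X\cdot Y$ with $X$ at lower derivative order and $Y$ at higher order, so the same product-type bound that underlies Lemma \ref{p_F_estimatesc} disposes of them uniformly.
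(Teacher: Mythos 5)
Your proposal follows the same overall strategy as the paper: test the $\dt^j$-differentiated geometric formulation \eqref{linear_geometricc} against the velocity, exploit the exact cancellation in the magnetic terms to produce the time derivative of $\ns{\p_3\dt^j\eta_\ast}_0+\ns{\diverge_\ast\dt^j\eta_\ast}_0$, and absorb $F^j$ via Lemma \ref{p_F_estimatesc}. Your computation of the magnetic block is correct and is exactly the paper's identity \eqref{i_te_2c1}, and the treatment of the pressure, viscous and forcing terms is sound. The one substantive difference is the choice of test function: the paper multiplies by $J\dt^j u$, while you multiply by $\dt^j u$. The paper's choice makes the inertial term produce \emph{exactly} $\frac{1}{2}\dtt\int_\Omega\bar\rho\abs{\dt^j u}^2$ (the factors $J^{-1}$ and $J$ cancel) at the price of $(J-1)$ errors in the pressure/magnetic pairings, which are benign since they involve no extra time derivatives; your choice makes those pairings exact but leaves the inertial term as $\frac{1}{2}\dtt\int_\Omega\bar\rho J^{-1}\abs{\dt^j u}^2$ plus a commutator.

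This is where the small gap sits. Your commutator $\int_\Omega\bar\rho\,\dt(J^{-1})\abs{\dt^j u}^2$ is indeed $\ls\sqrt{\se{n}}\sd{n}$, but what you then control is $\dtt$ of the $J^{-1}$-weighted energy, not of the stated $\bar{\mathcal{E}}_n^t$. Converting to the literal statement requires bounding $\dtt\int_\Omega\bar\rho(J^{-1}-1)\abs{\dt^j u}^2$, which produces the term $2\int_\Omega\bar\rho(J^{-1}-1)\,\dt^{j+1}u\cdot\dt^j u$; for $j=n$ this involves $\dt^{n+1}u$, which is controlled by neither $\se{n}$ nor $\sd{n}$. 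So as written your argument proves the inequality for an energy that is merely \emph{equivalent} to $\bar{\mathcal{E}}_n^t$ (uniformly, since $\norm{J^{-1}-1}_{L^\infty}$ is small). That is enough for every downstream use in the synthesis section, but to obtain the proposition verbatim you should either adopt the paper's test function $J\dt^j u$ or restate the conclusion for the equivalent weighted energy. Two further cosmetic points: when testing against $\dt^j u$ the identity $\p_k(J\a_{lk})=0$ does not eliminate the $\p_k\a_{lk}$ term in the viscous integration by parts — that term survives as a cubic error (which you correctly anticipate can be absorbed) — and Korn's inequality is not needed since the full gradient is already present; Poincar\'e together with the coercivity $\mu\ns{\nabla v}_0+(\mu+\mu')\ns{\diverge v}_0\gss\ns{\nabla v}_0$ guaranteed by \eqref{visco} suffices.
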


\begin{proof}
Taking the dot product of the second equation of $\eqref{linear_geometricc}$ with $J\dt^j u$, $j=0,\dots,n,$ and then integrating by parts, using the boundary condition, we obtain
\begin{align}\label{i_ge_ev_0c} \nonumber
&\hal  \frac{d}{dt} \int_\Omega \bar \rho \abs{\dt^j u}^2
+ \int_\Omega  \mu J\abs{ \naba\dt^j u}^2 +(\mu+\mu') J\abs{ \diva\dt^j u}^2+\int_\Omega P '(\bar{\rho} )\bar\rho  \diverge (\dt^j \eta) \diverge(\dt^j u)\nonumber
\\&\quad+\int_\Omega\kappa\bar b^2\left(
  \p_3 (\dt^j \eta)\cdot \p_3(\dt^j u)-\diverge (\dt^j \eta) \p_3(\dt^j u_3)+ \diverge (\dt^j \eta)\diverge(\dt^j u)- \p_3(\dt^j \eta_3)\diverge (\dt^j u)\right)\nonumber
\\&\quad =\int_\Omega J\dt^j u\cdot F^{j}-\int_\Omega P '(\bar{\rho} )\bar\rho  \diverge (\dt^j \eta) \diverge((J-1)\dt^j u)\nonumber
\\&\qquad- \int_\Omega
\kappa\bar b^2\left( \p_3 (\dt^j \eta)\cdot \p_3((J-1)\dt^j u)-\diverge (\dt^j \eta) \p_3((J-1)\dt^j u_3) \right.\nonumber
\\&\qquad\qquad\qquad \left.  +\diverge (\dt^j \eta)\diverge((J-1)\dt^j u)- \p_3(\dt^j \eta_3)\diverge ((J-1)\dt^j u)\right). 
\end{align}
By the first equation, we get
\begin{equation}\label{i_te_2c}
\int_\Omega   \diverge (\dt^j \eta) \diverge(\dt^j u) = \int_\Omega   \diverge (\dt^j \eta) \diverge(\dt^{j+1} \eta)
=\hal\dtt \int_\Omega   \abs{ \diverge (\dt^j \eta) }^2
\end{equation}
and
\begin{align}\label{i_te_2c1} 
&\int_\Omega
 \left( \p_3 (\dt^j \eta)\cdot \p_3(\dt^j u)-\diverge (\dt^j \eta) \p_3(\dt^j u_3)+ \diverge (\dt^j \eta)\diverge(\dt^j u)- \p_3(\dt^j \eta_3)\diverge (\dt^j u)\right)\nonumber
\\&\quad=\hal\dtt \int_\Omega
 \left( \abs{\dt^j \p_3\eta}^2 -2\diverge (\dt^j \eta) \p_3(\dt^j \eta_3)+ \abs{\diverge (\dt^j \eta)}^2\right)\nonumber
\\&\quad=\hal\dtt \int_\Omega
 \left( \abs{\dt^j \p_3\eta_\ast}^2  + \abs{\dt^j \diverge_\ast  \eta_\ast}^2\right). 
\end{align}

We now estimate the right hand side of \eqref{i_ge_ev_0c}. For the $F^{j}$ term, by \eqref{p_F_e_001c}, we may bound
\begin{equation}
\int_\Omega  J\dt^j u\cdot F^{j} \ls   \norm{\dt^j u}_{0}   \norm{F^{j}}_0 \ls  \sqrt{\sd{n} } \sqrt{\se{n} \sd{n}}
 .
\end{equation}
The remaining terms can be bounded by
\begin{equation}\label{i_te_5c}
 \norm{\dt^j \eta}_{1}  \norm{ \eta}_{3} \norm{ \dt^ju }_{1}\ls   \sqrt{\sd{n} } \sqrt{\se{n} } \sqrt{\sd{n} }  .
\end{equation}

Now we combine \eqref{i_te_2c}--\eqref{i_te_5c} to deduce from \eqref{i_ge_ev_0c} that, summing over $j$,
\begin{equation} \label{iiiic}
 \hal \frac{d}{dt}\bar{\mathcal{E}}_{n}^{t}
+  \sum_{j=0}^{ n}  \int_\Omega  \mu J\abs{ \naba\dt^j u}^2 +(\mu+\mu') J\abs{ \diva\dt^j u}^2
  \ls   \sqrt{\se{n} } \sd{ n}.
\end{equation}
We then seek to replace $ J\abs{ \nabla_{\mathcal{A}} \dt^{j} u}^2$ with $\abs{\nabla \dt^{j} u}^2$ and $ J\abs{ \diverge_{\mathcal{A}} \dt^{j} u}^2$ with $\abs{\diverge \dt^{j} u}^2$
in \eqref{iiiic}.  To this end we write
\begin{equation}\label{i_te_8c}
 J\abs{ \naba \dt^{j} u}^2 = \abs{\nabla \dt^{j} u}^2+(J-1)\abs{\nabla \dt^{j} u}^2  +   J\left(\naba \dt^{j} u + \nabla\dt^{j} u\right): \left(\naba \dt^{j} u - \nabla \dt^{j} u\right)
\end{equation}
and
\begin{equation}\label{i_te_8c2}
 J\abs{ \diverge_\a \dt^{j} u}^2 = \abs{\diverge \dt^{j} u}^2+(J-1)\abs{\diverge \dt^{j} u}^2  +   J\left(\diva \dt^{j} u + \diverge\dt^{j} u\right) \left(\diva \dt^{j} u - \diverge \dt^{j} u\right).
\end{equation}
Hence,
\begin{equation}\label{i_te_9c}
 \sum_{j=0}^{ n}  \int_\Omega  \mu J\abs{ \naba\dt^j u}^2 +(\mu+\mu') J\abs{ \diva\dt^j u}^2
 \ge \sum_{j=0}^{ n}  \int_\Omega  \mu  \abs{ \nabla\dt^j u}^2 +(\mu+\mu') \abs{ \diverge\dt^j u}^2
 -C  \sqrt{\se{n}} \sd{n}.
\end{equation}
We may then use \eqref{i_te_9c} to replace in \eqref{iiiic} and derive \eqref{i_te_0c}, by \eqref{visco} and using Poincar\'e's inequality.
\end{proof}

\subsubsection{Energy evolution of horizontal derivatives}

For the horizontal spatial derivatives, we shall use the following linear perturbed formulation
\begin{equation}\label{perturbc}
\begin{cases}
\partial_t\eta=u & \text{in }
\Omega
 \\ \bar\rho\partial_t u -\mu\Delta  {u}-(\mu+\mu')\nabla \diverge u -P '(\bar{\rho} )\bar\rho \nabla \diverge \eta
 &\\\quad=\kappa\bar b^2\left( \p_3^2\eta- \p_3\diverge \eta e_3+\nabla\diverge \eta- \nabla\p_3\eta_3\right)+ G & \text{in }
\Omega \\
u= 0 &\text{on }\p\Omega,
\end{cases}
\end{equation}
where for $i=1,2,3,$
\begin{align}\label{317317} 
G_i=&\mathcal{R}_i^\eta+(\mu+\mu') \a_{ij}\p_j\a_{kl}\p_l u_k+(\mu+\mu')(\a_{ij}\a_{kl}-\delta_{ij}\delta_{kl})\p_j\p_l u_k\nonumber
\\&+\mu \a_{jk}\p_k\a_{jl}\p_l u_i+\mu(\a_{jk}\a_{jl}-\delta_{jk}\delta_{jl})\p_k\p_l u_i-\bar \rho (J^{-1}-1)\partial_t u_i. 
\end{align}

We record the estimates of the nonlinear term $G$ in the following lemma.
\begin{lem}\label{p_G_estimatesc}
For $n\ge 3$, it holds that
 \begin{equation}\label{p_G_e_0c}
  \ns{ \bar{\nab}_0^{2n-2} G}_{0}   \ls  (\se{n})^2
 \end{equation}
and
\begin{equation}\label{p_G_e_001c}
\ns{ \bar{\nab}_0^{2n-1} G}_{0}   \ls  \se{n}\sd{n}.
\end{equation}
\end{lem}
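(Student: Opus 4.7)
The plan is to imitate the $L^\infty\times L^2$ splitting used for Lemma~\ref{p_F_estimatesc}, now carrying along the extra derivatives. Every summand in $G$ is at least quadratic, of product shape $X\cdot Y$, where $X$ is a smooth function of $\nabla\eta$ vanishing at $\nabla\eta=0$ (such as $\a-I$, $J^{-1}-1$, $\a\,\pa\a$, or the prefactors inside $\mathcal{R}^\eta$), and $Y$ is a derivative of the other unknown: $\nabla u$, $\nabla^2 u$, $\dt u$, or (inside $\mathcal{R}^\eta$) a second derivative of $\eta$ paired with $\nabla\eta$. I apply $\pa^\alpha$ with $|\alpha|\le 2n-2$ (respectively $|\alpha|\le 2n-1$) and expand by the Leibniz rule into a finite sum of terms $\pa^\beta X\cdot\pa^\gamma Y$ with $|\beta|+|\gamma|=|\alpha|$.

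For each such term, I would place the factor with smaller parabolic index in $L^\infty(\Omega)$ via the Sobolev embedding $H^2(\Omega)\hookrightarrow L^\infty(\Omega)$. By the pigeonhole principle that index is at most $\lfloor|\alpha|/2\rfloor\le n-1$, so the $L^\infty$ factor requires at most $n+1$ parabolic derivatives of control, well within $\se{n}$ (using $\dt\eta=u$ to convert temporal $\eta$-derivatives into $u$-derivatives, and the smallness $\mathcal{G}_{2N}(T)\le\delta$ to bound smooth analytic functions of $\nabla\eta$ by $1+O(\sqrt{\se{n}})$). The remaining factor goes in $L^2(\Omega)$.

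For the first inequality ($|\alpha|\le 2n-2$), the $L^2$ factor carries at most $2n-2$ parabolic derivatives: $\pa^\gamma\nabla^2 u$ is bounded by $\norm{\dt^{\gamma_0}u}_{2n-2\gamma_0}$, and $\pa^\gamma\nabla^2\eta$ by $\norm{\eta}_{2n+1}$ (or by a $u$-norm when $\gamma_0\ge 1$, via $\dt\eta=u$), both inside $\se{n}$. Squaring, each summand contributes $(\se{n})^2$. For the second inequality ($|\alpha|\le 2n-1$), whenever $Y$ is a $u$-factor the $L^2$ norm is controlled by $\sqrt{\sd{n}}$ via $\norm{\dt^j u}_{2n-2j+1}$, giving the per-term bound $\sqrt{\se{n}\sd{n}}$ whose square is $\se{n}\sd{n}$.

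The main obstacle is the $\mathcal{R}^\eta$ contribution in the second inequality, because a naive estimate would demand $\norm{\eta}_{2n+1}$, which is in $\se{n}$ but not in $\sd{n}$. To handle it I would exploit the specific derivation of $\mathcal{R}^\eta$: since $B=\bar b J^{-1}(e_3+\p_3\eta)$ and $\rho=\bar\rho J^{-1}$, every second-order $\eta$-derivative appearing in $B\cdot\nabla_\a B-\nabla_\a(|B|^2/2)$ or in $\nabla_\a P(\rho)$ is, after subtracting the linear-in-$\eta$ part, organized so as to carry either a $\p_3$ or a $\diverge$ on one of the two differentiations; schematically the top-order part of $\mathcal{R}^\eta$ takes the form $(\text{smooth of }\nabla\eta)\cdot\nabla\p_3\eta$ or $(\text{smooth of }\nabla\eta)\cdot\nabla\diverge\eta$. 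Accordingly, $\norm{\pa^\gamma\nabla\p_3\eta}_0\le\norm{\p_3\eta}_{|\gamma|+1}$ and $\norm{\pa^\gamma\nabla\diverge\eta}_0\le\norm{\diverge\eta}_{|\gamma|+1}$, both bounded by $\sqrt{\sd{n}}$ for $|\gamma|\le 2n-1$ thanks to $\norm{\p_3\eta}_{2n}^2$ and $\norm{\diverge\eta}_{2n}^2$ in $\sd{n}$. This closes the second inequality with the claimed $\se{n}\sd{n}$ bound.
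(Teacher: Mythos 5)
Your framework for the routine terms (Leibniz expansion, $L^\infty\times L^2$ splitting with the low-index factor in $L^\infty$ via Sobolev embedding) is the same as the paper's and is fine, and you correctly isolated the genuinely problematic term in \eqref{p_G_e_001c}: the contribution where all $2n-1$ derivatives are spatial and land on the $\nabla^2\eta$ factor of $\mathcal{R}^\eta$, producing $\nabla^{2n+1}\eta$, which $\sd{n}$ does not control. However, your fix rests on a structural claim that is false. The quadratic part of $\mathcal{R}^\eta$ is \emph{not} of the form $(\text{smooth of }\nabla\eta)\cdot\nabla\p_3\eta$ or $(\text{smooth of }\nabla\eta)\cdot\nabla\diverge\eta$. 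For instance, expanding $\nabla_\a\bigl(|B|^2/2\bigr)$ with $B=\bar b J^{-1}(e_3+\p_3\eta)$, the factor $\p_j J = J\,\a_{kl}\p_j\p_l\eta_k$ contributes, at quadratic order, terms such as $(\a_{kl}-\delta_{kl})\,\p_i\p_l\eta_k \sim \p_k\eta_l\,\p_i\p_l\eta_k$; taking $k=1$, $l=2$, $i\in\{1,2\}$ gives $\p_1\eta_2\,\p_i\p_2\eta_1$, whose second factor is neither a derivative of $\p_3\eta$ nor of $\diverge\eta$. The same happens in $\mathcal{R}_P^\eta$, where $\nabla_\a P(\bar\rho J^{-1})$ produces $O(\nabla\eta)\cdot\p_j\p_l\eta_k$ with general indices. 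So after $2n-1$ horizontal derivatives you genuinely face $\norm{\eta}_{2n+1}$, and your proposed reduction to $\norm{\p_3\eta}_{2n}$ and $\norm{\diverge\eta}_{2n}$ does not go through.

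The repair is simpler than the structure you tried to invent, and it is what the paper does: for exactly these terms, reverse the roles of the two factors. Put the top-order factor $\nabla^{2n+1}\eta$ in $L^2$ and bound $\norm{\nabla^{2n+1}\eta}_0^2\le\norm{\eta}_{2n+1}^2\ls\se{n}$ using the \emph{energy}, and put the low-order factor (which is $O(\nabla\eta)$ or a low derivative of $u$) in $L^\infty$ and bound it by the \emph{dissipation}, e.g. $\norm{\nabla\eta}_{L^\infty}^2\ls\norm{\eta}_{3}^2\ls\norm{\eta}_{2n}^2\ls\sd{n}$ since $2n\ge 6$. The product is then $\ls\se{n}\sd{n}$, which is exactly the claimed bound in \eqref{p_G_e_001c}; all other terms are handled as you describe, and \eqref{p_G_e_0c} needs no such swap since only $\norm{\eta}_{2n}\le\norm{\eta}_{2n+1}\ls\sqrt{\se{n}}$ is ever required there.
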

\begin{proof}
Note that all terms in the definitions of $G$ are at least quadratic. We apply these
space-time differential operators to $G$ and then expand using the Leibniz rule; each product in
the resulting sum is also at least quadratic. We then write each term in the form $X Y$, where $X$ involves fewer derivative counts than $Y$. Then the estimate \eqref{p_G_e_001c} follows similarly as Lemma \ref{p_F_estimatesc} with a slight modification when $X=\nabla^{2n+1} \eta$; in such cases, we estimate $\norm{\nabla^{2n+1}\eta}_{0}^2\ls \se{n}$ and $\norm{Y}_{L^\infty}^2\ls \sd{n}$. The estimate \eqref{p_G_e_0c} follows more easily.
\end{proof}

For a generic integer $n\ge 3$, we define the  horizontal energy by
\begin{equation}
 \bar{\mathcal{E}}^{\ast}_{n} =  \bar\rho \norm{\nabla_\ast u}_{0,2n-1}^2+P'(\bar\rho)\bar\rho \norm{ \nabla_\ast \diverge \eta}_{0,2n-1}^2 +\kappa\bar b^2 \norm{ \nabla_\ast \p_3\eta_\ast}_{0,2n-1}^2
  +\kappa\bar b^2 \norm{ \nabla_\ast\diverge_\ast\eta_\ast}_{0,2n-1}^2
\end{equation}
and the horizontal dissipation by
\begin{equation}
 \bar{\mathcal{D}}_n^{\ast} = \norm{ \nabla_\ast  u }_{1,2n-1}^2 .
\end{equation}
Then we have the following energy evolution.

\begin{prop}\label{p_upper_evolution  N12c}
For $n\ge 3$, it holds that
\begin{equation}\label{p_u_e_00c}
\frac{d}{dt}\bar{\mathcal{E}}^{\ast}_{n}+\bar{\mathcal{D}}_{n}^{\ast}\ls\sqrt{ \se{n}  } \sd{n} .
\end{equation}
\end{prop}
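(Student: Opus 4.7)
The plan is to mirror Proposition \ref{i_temporal_evolution  Nc}, but differentiating the \emph{constant-coefficient} perturbed system \eqref{perturbc} in horizontal spatial directions rather than in time. The decisive advantage is that purely horizontal derivatives $\p^\alpha$ commute with the linear part of \eqref{perturbc} and preserve the Dirichlet boundary condition $u = 0$ on $\p\Omega$; accordingly, no commutator error analogous to $F^j$ in \eqref{F_def_startc} arises, and the linear energy identity is clean.

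Concretely, for $\alpha \in \mathbb{N}^2$ horizontal with $1 \le |\alpha| \le 2n$, I would apply $\p^\alpha$ to the momentum equation of \eqref{perturbc}, take the $L^2$ inner product with $\p^\alpha u$, and integrate by parts in space. Using $\p^\alpha \dt \eta = \p^\alpha u$ together with the algebraic manipulations behind \eqref{i_te_2c}--\eqref{i_te_2c1} applied to $\p^\alpha \eta$, the pressure and magnetic stress terms collapse into a total time derivative, the viscous terms produce $\mu \norm{\nabla \p^\alpha u}_0^2 + (\mu+\mu') \norm{\diverge \p^\alpha u}_0^2$, and the inertia yields $\hal \bar\rho \dtt \norm{\p^\alpha u}_0^2$. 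Summing over all admissible $\alpha$ assembles $\hal \dtt \bar{\mathcal{E}}_n^{\ast}$ on the left, plus the viscous contributions $\mu \norm{\nabla \nabla_\ast u}_{0,2n-1}^2 + (\mu+\mu') \norm{\diverge \nabla_\ast u}_{0,2n-1}^2$. Since each $\p^\alpha u$ with horizontal $\alpha$ inherits the vanishing boundary trace of $u$, Poincar\'e's inequality upgrades these viscous quantities into the full dissipation $\bar{\mathcal{D}}_n^{\ast} = \norm{\nabla_\ast u}_{1,2n-1}^2$.

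The main obstacle is the forcing term $\int_\Omega \p^\alpha u \cdot \p^\alpha G$ at the top order $|\alpha| = 2n$: a direct estimate would demand $\norm{\p^\alpha G}_0$ with $|\alpha| = 2n$, one order beyond the range supplied by Lemma \ref{p_G_estimatesc}. I would remedy this by transferring one horizontal derivative off of $\p^\alpha u$: picking $i \in \{1,2\}$ so that $\alpha = \alpha' + e_i$ with $|\alpha'| = 2n - 1$, horizontal integration by parts (whose boundary contributions vanish) gives
\begin{equation*}
\int_\Omega \p^\alpha u \cdot \p^\alpha G = -\int_\Omega \p^{\alpha + e_i} u \cdot \p^{\alpha'} G.
\end{equation*}
Now $\norm{\p^{\alpha + e_i} u}_0 \ls \sqrt{\bar{\mathcal{D}}_n^{\ast}}$ because $\p^{\alpha+e_i}u$ carries $2n+1$ horizontal derivatives that are captured by the $H^1$-component of $\norm{\nabla_\ast u}_{1,2n-1}$, while $\norm{\p^{\alpha'} G}_0 \le \norm{\bar{\nab}_0^{2n-1} G}_0 \ls \sqrt{\se{n} \sd{n}}$ by \eqref{p_G_e_001c}. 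Cauchy--Schwarz then bounds the top-order contribution by $\sqrt{\se{n}}\,\sd{n}$; for $|\alpha| \le 2n - 1$ no transfer is required, since Poincar\'e applied to $\p^\alpha u$ together with \eqref{p_G_e_001c} directly yield the same bound. Absorbing the viscous quantities and collecting terms yields \eqref{p_u_e_00c}.
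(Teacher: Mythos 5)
Your proposal is correct and follows essentially the same route as the paper: apply purely horizontal $\p^\alpha$ to the linear perturbed formulation \eqref{perturbc}, exploit that such derivatives commute with the constant-coefficient operator and preserve the boundary condition, reduce the pressure/magnetic terms to total time derivatives as in \eqref{i_te_2c}--\eqref{i_te_2c1}, and handle $\int_\Omega \p^\al u\cdot\p^\al G$ by moving one horizontal derivative onto $u$ so that only $\norm{G}_{2n-1}$ from \eqref{p_G_e_001c} is needed. The only cosmetic difference is that the paper performs this integration by parts uniformly for all $1\le|\al|\le 2n$ rather than only at top order, and your phrase ``off of $\p^\alpha u$'' should read ``off of $\p^\alpha G$,'' though your displayed identity is the correct one.
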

\begin{proof}
We take $\alpha\in \mathbb{N}^{1+2}$ so that $1\le |\al|\le 2n$. Applying $\p^\al $ to the second equation of \eqref{perturbc} and then taking the dot product with $\p^\al u$, as in Proposition \ref{i_temporal_evolution  Nc}, we find that
\begin{align} \label{p_u_e_1110c} 
 &\hal \frac{d}{dt}\left(  \int_\Omega \bar \rho \abs{\p^\al   u }^2  +P'(\bar\rho)\bar\rho \abs{ \p^\al \diverge  \eta }^2+\kappa\bar b^2 \abs{\p^\al\p_3   \eta_\ast }^2 +\kappa\bar b^2 \abs{ \p^\al \diverge_\ast  \eta_\ast }^2 \right)\nonumber
  \\&\quad+ \int_\Omega  \mu  \abs{ \nabla\p^\al u}^2 +(\mu+\mu') \abs{ \diverge\p^\al u}^2
= \int_\Omega   \p^\al  u  \cdot  \p^\al  G  . 
\end{align}

We now estimate the right side of \eqref{p_u_e_1110c}. Since $\abs{\al}\ge 1$, we may write $\alpha = \gamma +(\alpha-\gamma)$ for some $\gamma \in \mathbb{N}^2$ with $\abs{\gamma}=1$. We can then integrate by parts and use \eqref{p_G_e_001c} to have
\begin{align}\label{i_de_4c} 
  \int_\Omega     \p^\al   u \cdot   \p^\al  G   &=- \int_\Omega     \p^{\alpha+\gamma}   u \cdot   \p^{\alpha-\gamma}  G
 \le \norm{\p^{\alpha+\gamma}   u}_{0}  \norm{ \p^{\alpha-\gamma}   G  }_{0}\nonumber \\&
\le \norm{\p^{\alpha}   u}_{1}  \norm{    G  }_{2n-1 }
\ls \sqrt{ \sd{n} } \sqrt{ \se{n} \sd{n}} . 
\end{align}
The estimate \eqref{p_u_e_00c} then follows from \eqref{p_u_e_1110c} by summing over such $\alpha$ and using Poincar\'e's inequality.
\end{proof}

\subsubsection{Energy evolution controlling $\eta$}

Note that the dissipation estimates in $\bar{\mathcal{D}}_n^t$ of Proposition \ref{i_temporal_evolution  Nc} and $\bar{\mathcal{D}}_n^\ast$ of Proposition \ref{p_upper_evolution  N12c} only contain $u$, in this subsubsection we will recover certain dissipation estimates of $\eta$. Since $\dt \eta =u$, we then only need to estimate for $\eta$ without time derivatives. The estimates result from testing the linear perturbed formulation \eqref{perturbc} by $\eta$. Note that the boundary condition that $\eta=0$ on $\p\Omega$ guarantees this.

We will need a technical lemma to estimate the term $\int_\Omega\p^\al \eta \cdot  \p^\al G$ when $\alpha$ is the highest horizontal derivative.
\begin{lem}\label{p_G_estimatesc22}
It holds that
\begin{equation}\label{pgn1}
\norm{  G }_{4N-1}^2     \ls (\sd{2N})^2+  \se{N+2}    \se{2N}
\end{equation}
and
\begin{equation}\label{pgn2}
\norm{  G }_{2(N+2)}^2   \ls\se{2N}   \sd{N+2}.
\end{equation}
\end{lem}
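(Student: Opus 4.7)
The plan is to mimic the proofs of Lemmas \ref{p_F_estimatesc} and \ref{p_G_estimatesc}: expand $\partial^\alpha G$ via Leibniz (and Fa\`a di Bruno for the smooth nonlinearities $\mathcal{A}(\nabla\eta)$, $J^{-1}(\nabla\eta)$, and $\mathcal{R}^\eta$, each of which vanishes to first order at $\nabla\eta=0$), then bound every resulting product $\partial^\beta X\cdot\partial^\gamma Y$ in $L^2$ by placing the factor with fewer derivatives in $L^\infty$ through the Sobolev embedding $H^2(\Omega)\hookrightarrow L^\infty(\Omega)$. The refinement over Lemma \ref{p_G_estimatesc} is that we must carefully track which of the four quantities $\se{2N},\sd{2N},\se{N+2},\sd{N+2}$ each factor lives in, so that the product matches the right-hand sides of \eqref{pgn1} and \eqref{pgn2}.

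For \eqref{pgn1}, fix a spatial multi-index $\alpha$ with $|\alpha|\le 4N-1$ and write each term of $G$ schematically as $X\cdot Y$ where $X$ is an $\eta$-coefficient (arising from $\mathcal{A}-I$, $\mathcal{A}\mathcal{A}-I$, $\nabla\mathcal{A}$, or $J^{-1}-1$) and $Y\in\{\nabla u,\nabla^2 u,\p_t u,\nabla^2\eta\}$. Two extreme Leibniz distributions need to be handled. When all derivatives fall on $X$, the heavy factor is at worst $\nabla^{4N}\eta$, with $\norm{\nabla^{4N}\eta}_0^2\le\norm{\eta}_{4N+1}^2\ls\se{2N}$, while the light factor satisfies $\norm{Y}_{L^\infty}^2\ls\norm{Y}_2^2\ls\se{N+2}$, yielding $\se{N+2}\se{2N}$. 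When all derivatives fall on $Y$: if $Y=\nabla u$ or $Y=\nabla^2\eta$ the heavy $L^2$ norm is $\norm{u}_{4N}^2\ls\se{2N}$ or $\norm{\eta}_{4N+1}^2\ls\se{2N}$, giving directly $\se{N+2}\se{2N}$; if instead $Y=\nabla^2 u$ or $Y=\p_t u$ the heavy norm is $\norm{u}_{4N+1}^2\ls\sd{2N}$ or $\norm{\p_t u}_{4N-1}^2\ls\sd{2N}$, so the natural bound is $\se{N+2}\sd{2N}$, which by AM--GM is absorbed as
\[
\se{N+2}\sd{2N}\le \tfrac12(\sd{2N})^2+\tfrac12(\se{N+2})^2\le \tfrac12(\sd{2N})^2+\tfrac12\se{N+2}\se{2N},
\]
using $\se{N+2}\le\se{2N}$. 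All intermediate Leibniz splits are strictly easier, since then both factors carry more room from the Sobolev embedding.

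For \eqref{pgn2} we repeat the procedure with $|\alpha|\le 2N+4$. The decisive observation is that with only $2N+4$ derivatives to distribute, the low-derivative factor can always be estimated in $L^\infty$ with its norm bounded by $\sd{N+2}$: for $N\ge 4$ one has $\norm{\nabla\eta}_{L^\infty}^2\ls\norm{\eta}_3^2\le\norm{\eta}_{2N+4}^2\ls\sd{N+2}$ and, analogously, $\norm{\nabla^2\eta}_{L^\infty}^2$, $\norm{\nabla u}_{L^\infty}^2$, $\norm{\nabla^2 u}_{L^\infty}^2$, and $\norm{\p_t u}_{L^\infty}^2$ are all $\ls\sd{N+2}$ directly from the definition of $\sd{N+2}$. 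The high-derivative $L^2$ factor never exceeds $\nabla^{2N+6}$ applied to $\eta$ or $u$, and since $2N+6\le 4N\le 4N+1$ whenever $N\ge 3$, such factors are bounded by $\norm{\eta}_{4N+1}^2\ls\se{2N}$ or $\norm{u}_{4N}^2\ls\se{2N}$. Every product is then at most $\sd{N+2}\se{2N}$, giving \eqref{pgn2}.

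The main obstacle is the accounting in the $Y\in\{\nabla^2 u,\p_t u\}$ sub-case of \eqref{pgn1}: the natural Leibniz bound $\se{N+2}\sd{2N}$ must be absorbed into the asymmetric right-hand side $(\sd{2N})^2+\se{N+2}\se{2N}$, which is achieved by AM--GM together with the trivial inequality $\se{N+2}\le\se{2N}$. Once this is in hand, the remaining derivative-count bookkeeping is routine, and the assumption $N\ge 4$ provides the necessary margin in every Sobolev embedding and index inequality encountered.
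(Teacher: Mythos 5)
Your proof is correct and follows essentially the same route as the paper: Leibniz expansion of each quadratic term of $G$ together with $L^\infty$--$L^2$ product estimates via Sobolev embedding, placing the top-order factor in $L^2$ and measuring each factor against the appropriate one of $\se{2N},\sd{2N},\se{N+2},\sd{N+2}$. The only differences are cosmetic and harmless: where the top factor is $\nabla^{4N+1}u$ or $\p^{4N-1}\p_t u$ the paper pairs $\sd{2N}$ with $\sd{2N}$ directly rather than using your AM--GM absorption of $\se{N+2}\sd{2N}$, and the worst $\eta$-factor when all derivatives land on the coefficient is actually $\nabla^{4N+1}\eta$ (coming from $\nabla\mathcal{A}$ or $\mathcal{R}^\eta$, not $\nabla^{4N}\eta$), though your bound $\norm{\eta}_{4N+1}^2\ls\se{2N}$ covers it all the same.
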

\begin{proof}
We again write each term of $\p^\alpha G$ in the form $X Y$, where $X$ involves fewer derivative counts than $Y$.
To derive \eqref{pgn1}, we estimate both $\norm{X}_{0}^2\ls \sd{2N}$ and $\norm{Y}_{L^\infty}^2\ls \sd{2N}$ except the cases when $X=\nabla^{2n+1}\eta$; in such cases, we estimate $\norm{\nabla^{2n+1}\eta}_{0}^2\ls \se{2N}$ and $\norm{Y}_{L^\infty}^2\ls \se{N+2}$. Then the estimate \eqref{pgn1} follows.

The estimate \eqref{pgn2}
 follows by estimating $\norm{X}_{L^\infty}^2\ls \se{2N}$ and $\norm{Y}_{0}^2\ls \sd{N+2}$.
 \end{proof}

For a generic integer $n\ge 3$, we define the recovering energy by
\begin{equation}
 \bar{\mathcal{E}}^\sharp_{n} =    \mu  \norm{ \nabla  \eta}_{0,2n}^2 +(\mu+\mu')\norm{ \diverge \eta}_{0,2n}^2
\end{equation}
and the corresponding dissipation by
\begin{equation}
 \bar{\mathcal{D}}_n^\sharp =  \norm{   \diverge \eta}_{0,2n }^2 +  \norm{ \p_3\eta}_{0,2n }^2
  +  \norm{  \eta }_{0,2n}^2.
\end{equation}
Then we have the following energy evolution.

\begin{prop}\label{p_upper_evolution  N'132c}
It holds that
\begin{equation}\label{p_u_e_00'132c}
\frac{d}{dt}\left(\bar{\mathcal{E}}^\sharp_{2N}+2\sum_{\substack{\al\in \mathbb{N}^2\\  |\al|\le 4N}}\int_\Omega \bar \rho  \partial^\alpha    u \cdot \partial^\alpha \eta\right)+\bar{\mathcal{D}}_{2N}^\sharp\ls\sqrt{ \se{2N}   } \sd{2N} +\sqrt{ \se{N+2} }\se{2N}+\bar{\mathcal{D}}_{2N}^t+\bar{\mathcal{D}}_{2N}^{\ast}
\end{equation}
and
\begin{equation}\label{p_u_e_00'132c2}
\frac{d}{dt}\left(\bar{\mathcal{E}}^\sharp_{N+2}+2\sum_{\substack{\al\in \mathbb{N}^2\\  |\al|\le 2(N+2)}}\int_\Omega \bar \rho  \partial^\alpha    u \cdot \partial^\alpha \eta\right)+\bar{\mathcal{D}}_{N+2}^\sharp\ls\sqrt{ \se{2N}   } \sd{N+2}+\bar{\mathcal{D}}_{N+2}^t +\bar{\mathcal{D}}_{N+2}^{\ast}.
\end{equation}
\end{prop}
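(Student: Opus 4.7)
The plan is to apply $\partial^\alpha$ to \eqref{perturbc} and test against $\partial^\alpha\eta$ for horizontal multi-indices $\alpha\in\mathbb{N}^2$ with $|\alpha|\le 2n$, exploiting the fact that $\eta|_{\partial\Omega}=0$ is preserved by horizontal differentiation so that all boundary contributions vanish. The $\partial_t u$ term produces $\frac{d}{dt}\int_\Omega \bar\rho\,\partial^\alpha u\cdot\partial^\alpha\eta-\int_\Omega \bar\rho|\partial^\alpha u|^2$ via $\partial_t\eta=u$; the viscosity terms yield $\tfrac{1}{2}\tfrac{d}{dt}\bigl(\mu\|\nabla\partial^\alpha\eta\|_0^2+(\mu+\mu')\|\diverge\partial^\alpha\eta\|_0^2\bigr)$ after integration by parts and again using $u=\partial_t\eta$; the pressure term contributes the coercive quantity $P'(\bar\rho)\bar\rho\|\diverge\partial^\alpha\eta\|_0^2$; and a short calculation based on $\partial_3\eta_3-\diverge\eta=-\diverge_\ast\eta_\ast$ reduces the magnetic contribution to $\kappa\bar b^2\bigl(\|\partial_3\partial^\alpha\eta_\ast\|_0^2+\|\diverge_\ast\partial^\alpha\eta_\ast\|_0^2\bigr)$. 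Multiplying this identity by $2$ and summing over $|\alpha|\le 2n$ exhibits the time derivative displayed in the proposition once $\int\bar\rho|\partial^\alpha u|^2\ls\bar{\mathcal{D}}_n^t+\bar{\mathcal{D}}_n^\ast$ is absorbed on the right.

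To upgrade the resulting coercive quantity to the full $\bar{\mathcal{D}}_n^\sharp$, I use $\partial_3\eta_3=\diverge\eta-\diverge_\ast\eta_\ast$ to get $\|\partial_3\eta_3\|_{0,2n}^2\ls\|\diverge\eta\|_{0,2n}^2+\|\diverge_\ast\eta_\ast\|_{0,2n}^2$, which combined with the already-controlled $\|\partial_3\eta_\ast\|_{0,2n}^2$ recovers $\|\partial_3\eta\|_{0,2n}^2$; Poincar\'e's inequality on $\Omega=\mathbb{R}^2\times(0,1)$ with $\eta|_{\partial\Omega}=0$ then gives $\|\eta\|_{0,2n}^2\ls\|\partial_3\eta\|_{0,2n}^2$, so that all three summands of $\bar{\mathcal{D}}_n^\sharp$ are controlled.

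The principal difficulty is estimating $\sum_{|\alpha|\le 2n}\int_\Omega\partial^\alpha\eta\cdot\partial^\alpha G$ by the RHS of the proposition. For $\alpha=0$, direct Cauchy-Schwarz $\|\eta\|_0\|G\|_0$ together with \eqref{p_G_e_001c} and Young's inequality absorbs $\|\eta\|_0^2$ into $\bar{\mathcal{D}}_n^\sharp$ and leaves $\se{n}\sd{n}\ls\sqrt{\se{2N}}\sd{n}$ by the smallness hypothesis $\se{2N}(t)\le\delta$. For $1\le|\alpha|\le 2n-1$, I integrate by parts in one horizontal direction to rewrite the integral as $-\int \partial^{\alpha+\gamma}\eta\cdot\partial^{\alpha-\gamma}G$ with $|\gamma|=1$, bound by $\|\eta\|_{0,2n}\|G\|_{2n-1}$, apply \eqref{p_G_e_001c} again, and proceed as before.

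The genuinely top-level case $|\alpha|=2n$ requires the sharper Lemma \ref{p_G_estimatesc22} and is the crux of the proof. For $n=2N$, the norm $\|G\|_{4N}$ is unavailable, so I integrate by parts once to obtain $\|\eta\|_{4N+1}\|G\|_{4N-1}$; then $\|\eta\|_{4N+1}^2\le\se{2N}$ combined with \eqref{pgn1} and $\sqrt{a+b}\le\sqrt{a}+\sqrt{b}$ produces exactly $\sqrt{\se{2N}}\sd{2N}+\sqrt{\se{N+2}}\se{2N}$. For $n=N+2$, estimate \eqref{pgn2} now supplies $\|G\|_{2N+4}$ directly, so plain Cauchy-Schwarz, Young's inequality, and the smallness of $\se{2N}$ yield $\epsilon\bar{\mathcal{D}}_{N+2}^\sharp+\sqrt{\se{2N}}\sd{N+2}$. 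Absorbing all small multiples of $\bar{\mathcal{D}}_n^\sharp$ into the LHS produces \eqref{p_u_e_00'132c} and \eqref{p_u_e_00'132c2}.
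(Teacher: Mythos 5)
Your proposal is correct and follows essentially the same route as the paper: testing $\partial^\alpha$ of the perturbed momentum equation against $\partial^\alpha\eta$, integrating by parts in time on the $\bar\rho\,\partial_t\partial^\alpha u\cdot\partial^\alpha\eta$ term, reducing the magnetic contribution to $\|\partial_3\partial^\alpha\eta_\ast\|_0^2+\|\diverge_\ast\partial^\alpha\eta_\ast\|_0^2$, and handling the top-order case $|\alpha|=2n$ exactly as the paper does via Lemma \ref{p_G_estimatesc22} (one horizontal integration by parts against $\|\eta\|_{4N+1}\ls\sqrt{\se{2N}}$ when $n=2N$, and direct Cauchy--Schwarz with \eqref{pgn2} when $n=N+2$). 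The only deviations are cosmetic (Young's inequality plus absorption for the low-order $G$ terms instead of the paper's direct bound $\|\partial^\alpha\eta\|_0\ls\sqrt{\sd{n}}$), so no further comment is needed.
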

\begin{proof}
We let $n$ denote either $2N$ or $N+2$ throughout the proof. Applying $\partial^\alpha$ with $\al\in \mathbb{N}^{2}$ so that $ |\al|\le 2n$ to the second equation of \eqref{perturbc} and then taking the dot product with $\p^\al \eta$, since $\eta=0$ on $\p\Omega$,  we find that
\begin{align} \label{p_u_e_111'c} 
 &\int_\Omega \bar \rho \dt (\partial^\alpha    u) \cdot \partial^\alpha \eta
  + \hal\dtt\int_\Omega  \mu  \abs{ \nabla\p^\al \eta}^2 +(\mu+\mu') \abs{ \diverge\p^\al \eta}^2\nonumber
\\&\quad+  \int_\Omega  P'(\bar\rho)\bar\rho \abs{ \p^\al \diverge  \eta }^2+\kappa\bar b^2\abs{\p^\al\p_3   \eta_\ast }^2 +  \kappa\bar b^2\abs{ \p^\al \diverge_\ast  \eta_\ast }^2
 =  \int_\Omega   \p^\al \eta  \cdot  \p^\al G . 
\end{align}

For the first term on the left hand side of \eqref{p_u_e_111'c}, we integrate by parts in time to obtain
\begin{equation}
\int_\Omega \bar \rho \dt (\partial^\alpha    u) \cdot \partial^\alpha \eta
=\dtt\int_\Omega \bar \rho  \partial^\alpha    u \cdot \partial^\alpha \eta- \int_\Omega \bar \rho   \partial^\alpha  u\partial^\alpha  \dt\eta=\dtt\int_\Omega \bar \rho  \partial^\alpha    u \cdot \partial^\alpha \eta- \int_\Omega \bar \rho  \abs{\partial^\alpha u }^2.
\end{equation}
For the right hand side of \eqref{p_u_e_111'c}, we first consider the case $n=2N$. If $\abs{\al}\le 4N-1$, then we use \eqref{p_G_e_001c} to have
\begin{equation}
\int_\Omega   \p^\al \eta \cdot  \p^\al G     \ls  \norm{\p^{\alpha}  \eta}_{0}  \norm{   G }_{4N-1}
 \ls \sqrt{ \sd{2N}}\sqrt{ \se{2N} \sd{2N}}.
\end{equation}
If $\abs{\al}= 4N$, we may write $\alpha = \gamma +(\alpha-\gamma)$ for some $\gamma \in \mathbb{N}^2$ with $\abs{\gamma}=1$. We can then integrate by parts and use \eqref{pgn1} to have
\begin{align} 
  \int_\Omega     \p^\al   \eta \cdot   \p^\al  G   &=- \int_\Omega     \p^{\alpha+\gamma}   \eta \cdot   \p^{\alpha-\gamma}  G
 \le \norm{ \eta}_{4N+1}  \norm{   G  }_{4N-1}\nonumber \\&
\ls \sqrt{ \se{2N} }\left(\sd{2N}+ \sqrt{ \se{N+2} \se{2N}} \right). 
\end{align}
Now for the case $n=N+2$, we use \eqref{pgn2} to estimate
\begin{equation}
\int_\Omega   \p^\al \eta \cdot  \p^\al G
 \ls \norm{\p^{\alpha}  \eta}_{0}  \norm{   G }_{2(N+2)}\ls \sqrt{ \sd{N+2} }\sqrt{\se{2N}\sd{N+2}}.
\end{equation}

Consequently, the estimates \eqref{p_u_e_00'132c} and \eqref{p_u_e_00'132c2} follow by collecting the estimates, summing over such $\alpha$ and using Poincar\'e's inequality.
\end{proof}

\subsection{Improved Estimates}

We now explore the structure of the linear perturbed formulation \eqref{perturbc} to improve the energy-dissipation estimate with the energy evolution in hand.

\subsubsection{ODE regularity}

There are some ODE structures that allow us to get the estimates of vertical derivatives of the solution. Taking the third component of the momentum equation in \eqref{perturbc}, we obtain
\begin{equation}\label{qode}
 (2\mu+\mu')\dt \p_3 q +P '(\bar{\rho} )\bar\rho \p_3 q
 =  - \bar \rho\partial_t u_3 +\mu\Delta_\ast \tilde{u}_3-\mu\p_3\diverge_\ast u_\ast+G_3.
\end{equation}
Here, for simplification of presentation, we have introduced the quantity $q:=-\diverge \eta$ for the ``density perturbation" (and hence $\dt q=-\diverge u$). Such structure was first exploited by \cite{MN83} in the study of the initial boundary value problem for the compressible Navier-Stokes equations. On the other hand, taking the first two components of the momentum equation in \eqref{perturbc}, we have
\begin{equation}\label{etaode}
 -\mu\p_3^2  {u}_\ast-\kappa\bar b^2 \p_3^2  \eta_\ast
= -\bar \rho\partial_t u_\ast +\mu\Delta_\ast u_\ast  -(\mu+\mu')\nabla_\ast \dt q - P '(\bar{\rho} )\bar\rho \nabla_\ast q-\kappa\bar b^2\left( \nabla_\ast q+ \nabla_\ast \p_3\eta_3\right)+G_\ast .
\end{equation}
Note that $u=\dt \eta$, and these two equations resemble the ODE $\dt f + f= g$, up to some errors, and this ODE displays natural energy-dissipation structure.

\begin{prop}\label{qqestprop}
For $n\ge 3$, there exists an energy $\mathfrak{E}_n$ which is equivalent to the sum $ \norm{ \pa_3    \q }_{2n-1}^2+ \norm{  \p_3^2   \eta_\ast }_{2n-1}^2$ such that
 \begin{equation}\label{qqest}
 \dtt\mathfrak{E}_n  +  \norm{   u  }_{2n+1}^2+ \norm{ \diverge \eta }_{2n}^2
+ \norm{  \p_3   \eta  }_{2n}^2+ \norm{     \eta  }_{2n}^2
 \ls { \se{n}  }\sd{n}
+\bar{\mathcal{D}}_n^t+\bar{\mathcal{D}}_n^\ast+\bar{\mathcal{D}}_n^\sharp+\norm{\pa_t  u  }_{2n-1}^2.
\end{equation}
\end{prop}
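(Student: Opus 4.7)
The plan is to exploit the first-order ODE-in-time structure $\dt f + c f = g$ present in \eqref{qode} (with $f = \p_3 q$ and $c = P'(\bar\rho)\bar\rho/(2\mu+\mu')$) and in \eqref{etaode}, rewritten via $u = \dt \eta$ as $\mu \dt (\p_3^2 \eta_\ast) + \kappa\bar b^2 \p_3^2 \eta_\ast = \mathrm{RHS}_2$. I will combine the resulting ODE dissipation of $\p_3 q$ and $\p_3^2 \eta_\ast$ with Lam\'e elliptic regularity for $u$ and an algebraic decomposition that trades vertical derivatives of $\diverge\eta$, $\p_3\eta$, and $\eta$ for derivatives of $\p_3 q$ and $\p_3^2 \eta_\ast$. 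The energy is taken to be
\[
\mathfrak{E}_n := \frac{2\mu+\mu'}{2}\sum_{|\alpha|\le 2n-1}\|\p^\alpha \p_3 q\|_0^2 + \frac{\mu}{2}\sum_{|\alpha|\le 2n-1}\|\p^\alpha \p_3^2 \eta_\ast\|_0^2,
\]
with $\alpha$ ranging over spatial multi-indices in $\mathbb{N}^3$, which is manifestly equivalent to $\|\p_3 q\|_{2n-1}^2 + \|\p_3^2 \eta_\ast\|_{2n-1}^2$.

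First I apply $\p^\alpha$ with $|\alpha|\le 2n-1$ to each ODE, take the $L^2$ inner product with $\p^\alpha f$, and sum, obtaining
\[
\dtt \mathfrak{E}_n + c_1\bigl(\|\p_3 q\|_{2n-1}^2 + \|\p_3^2\eta_\ast\|_{2n-1}^2\bigr) \ls \sum_{|\alpha|\le 2n-1}\Bigl|\int_\Omega \p^\alpha(\mathrm{RHS})\cdot \p^\alpha f\,dx\Bigr|.
\]
Each summand I bound by Cauchy--Schwarz with a small parameter, sorting terms as follows: pieces involving $\pa_t u$ enter $\|\pa_t u\|_{2n-1}^2$ on the RHS of the statement; the $G$ pieces contribute $\se{n}\sd{n}$ via Lemma \ref{p_G_estimatesc}; the pressure-like and magnetic-error pieces $\nabla_\ast q$, $\nabla_\ast \p_3\eta_3$, $\nabla_\ast \dt q$ are controlled by $\bar{\mathcal{D}}_n^\sharp + \bar{\mathcal{D}}_n^t$; and the viscous pieces $\Delta_\ast u$ and $\p_3\diverge_\ast u_\ast$ are split as $\varepsilon\|u\|_{2n+1}^2 + C_\varepsilon \bar{\mathcal{D}}_n^\ast$, reserving the $\varepsilon\|u\|_{2n+1}^2$ remainder for later absorption. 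The other factor $\p^\alpha f$ is always absorbed into the ODE dissipation with another small constant.

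Next I view the momentum equation in \eqref{perturbc} as a stationary Lam\'e system for $u$ with zero Dirichlet data and apply elliptic regularity to obtain $\|u\|_{2n+1}^2 \ls \|\mathrm{RHS}\|_{2n-1}^2 + \bar{\mathcal{D}}_n^t$. Using the identities $\p_3\diverge\eta = -\p_3 q$ and $\p_3^2\eta_3 = -\p_3 q - \diverge_\ast \p_3 \eta_\ast$ to extract vertical derivatives of $\nabla\diverge\eta$ and $\p_3^2\eta$, together with Lemma \ref{p_G_estimatesc} for $G$, this gives
\[
\|u\|_{2n+1}^2 \ls \|\pa_t u\|_{2n-1}^2 + \|\p_3 q\|_{2n-1}^2 + \|\p_3^2\eta_\ast\|_{2n-1}^2 + \bar{\mathcal{D}}_n^\sharp + \bar{\mathcal{D}}_n^t + \se{n}\sd{n}.
\]
The same algebraic decomposition, applied to any spatial multi-index $\gamma$ with $|\gamma|\le 2n$ by splitting into the $\gamma_3=0$ (horizontal, controlled by $\bar{\mathcal{D}}_n^\sharp$) and $\gamma_3\ge 1$ cases (one $\p_3$ factor is turned into $\p_3\diverge\eta = -\p_3 q$ or, using $\p_3\eta_3 = \diverge\eta - \diverge_\ast\eta_\ast$, into $\p_3^2\eta_\ast$-type derivatives), also yields
\[
\|\diverge\eta\|_{2n}^2 + \|\p_3\eta\|_{2n}^2 + \|\eta\|_{2n}^2 \ls \bar{\mathcal{D}}_n^\sharp + \|\p_3 q\|_{2n-1}^2 + \|\p_3^2\eta_\ast\|_{2n-1}^2.
\]

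Finally I add the ODE estimate to sufficiently small multiples of the elliptic and algebraic inequalities so that the $\varepsilon\|u\|_{2n+1}^2$ remainder from the ODE step is absorbed by the newly produced $\|u\|_{2n+1}^2$ on the LHS, and the reappearing $\|\p_3 q\|_{2n-1}^2 + \|\p_3^2\eta_\ast\|_{2n-1}^2$ on the right is absorbed by the ODE dissipation on the LHS; this yields \eqref{qqest}. The main obstacle I expect is precisely the bound on $\p^\alpha \Delta_\ast u$ in the ODE estimate when $\alpha$ carries many vertical derivatives, because $\bar{\mathcal{D}}_n^\ast$ only provides derivatives of $u$ with at most one vertical index; the small-$\varepsilon$ device paired with the Lam\'e upgrade to $\|u\|_{2n+1}^2$ is the mechanism that closes this gap. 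A secondary care point is to check that any integration-by-parts boundary contributions vanish, which follows from $u|_{\p\Omega}=0$ (so $\Delta_\ast u_3|_{\p\Omega}=0$) and from horizontal and time derivatives preserving the Dirichlet condition.
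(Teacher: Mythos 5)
Your overall architecture is the right one --- the two ODE structures \eqref{qode} and \eqref{etaode} are exactly what the paper exploits --- but the way you extract dissipation from them and the way you propose to close the absorption both differ from the paper, and the second difference is a genuine gap.

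First, the paper does not pair the ODEs with $\p^\al f$; it takes the square of the $L^2$ (anisotropic) norm of both sides. Expanding $\norm{(2\mu+\mu')\dt\p_3 q+P'(\bar\rho)\bar\rho\,\p_3 q}^2$ produces not only $\dtt\ns{\p_3 q}$ and the dissipation of $\p_3 q$, but also the dissipation of $\dt\p_3 q=-\p_3\diverge u$; likewise \eqref{etaode} yields dissipation of $\p_3^2 u_\ast$. These two extra dissipative terms are the actual source of vertical regularity for $u$ in the paper: together with $\p_3^2u_3=\p_3(\dt q+\diverge_\ast u_\ast)$ they let the left-hand side at the level of $k$ vertical derivatives control the right-hand side at level $k+1$, and an induction on $k$ (with the anisotropic norms $\norm{\cdot}_{k,2n-k-1}$ and geometrically chosen weights $\lambda_k$) builds up $\ns{u}_{2n+1}$ entirely from the ODE structure, with the base case $k=0$ absorbed by $\bar{\mathcal{D}}_n^t+\bar{\mathcal{D}}_n^\ast+\bar{\mathcal{D}}_n^\sharp$. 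Your inner-product version discards this information, so you have no intrinsic source of $\ns{u}_{2n+1}$ and must import it from the Lam\'e estimate.

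That is where the argument breaks. The claimed splitting $\ns{\p^\al\Delta_\ast u}_0\ls\varepsilon\ns{u}_{2n+1}+C_\varepsilon\bar{\mathcal{D}}_n^\ast$ is false at top order: for $\al=(0,0,2n-1)$ the term $\p_3^{2n-1}\Delta_\ast u$ is itself a derivative of exact order $2n+1$, and since $\sd{n}$ contains nothing above $H^{2n+1}$ there is no interpolation room to place a small constant in front of $\ns{u}_{2n+1}$ while pushing the remainder into $\bar{\mathcal{D}}_n^\ast$ (which carries only one unrestricted derivative of $u$). The alternative of applying Young's inequality directly forces a fixed $O(1)$ constant $C$ in front of $\ns{u}_{2n+1}$ on the right of the ODE step. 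Closing then requires adding $\lambda$ times the Lam\'e estimate $\ns{u}_{2n+1}\le C'(\ns{\p_3 q}_{2n-1}+\ns{\p_3^2\eta_\ast}_{2n-1}+\cdots)$ with $\lambda>C$ to absorb that term, while simultaneously requiring $\lambda C'$ to be smaller than the ODE dissipation constant $c_1$ so that the reappearing $\ns{\p_3 q}_{2n-1}+\ns{\p_3^2\eta_\ast}_{2n-1}$ can be absorbed. These two conditions demand $C<c_1/C'$ up to factors, a relation between independent universal constants that you cannot arrange. The same obstruction affects $\p^\al\p_3\diverge_\ast u_\ast$ and $\p^\al\nabla_\ast\dt q$. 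The missing idea is precisely the paper's level-by-level bootstrap in the number of vertical derivatives, which replaces one global absorption by $2n$ absorptions each of which only trades a term against the previous level's left-hand side. Your algebraic reductions of $\norm{\diverge\eta}_{2n}$, $\norm{\p_3\eta}_{2n}$, $\norm{\eta}_{2n}$ to $\bar{\mathcal{D}}_n^\sharp$, $\norm{\p_3 q}_{2n-1}$, $\norm{\p_3^2\eta_\ast}_{2n-1}$ are correct and consistent with what the paper does implicitly.
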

\begin{proof}
We fix $0\le k\le 2n-1$. We first take the norm $\norm{\cdot}_{k,2n-k-1}^2$ of the equation \eqref{qode} to have
\begin{align}\label{q1} 
&\frac{(2\mu+\mu') P '(\bar{\rho} )\bar\rho}{2}  \dtt \norm{ \pa_3    \q }_{k,2n-k-1}^2  +(P '(\bar{\rho} )\bar\rho)^2 \norm{ \pa_3   \q }_{k,2n-k-1}^2
 +(2\mu+\mu')^2 \norm{ \pa_3  \dt  \q }_{k,2n-k-1}^2\nonumber
\\&\quad =\norm{- \bar \rho\partial_t u_3 +\mu\Delta_\ast \tilde{u}_3-\mu\p_3\diverge_\ast u_\ast+G_3}_{k,2n-k-1}^2\nonumber
\\&\quad\ls   \norm{\pa_t  u_3 }_{2n-1}^2
+  \norm{    u }_{k+1,2n-k}^2+\norm{G_3}_{2n-1}^2 . 
\end{align}
Since
\begin{equation}\label{q11}
 \norm{   q}_{ 0,2n}^2 = \norm{\diverge \eta}_{ 0,2n}^2 \ls \bar{\mathcal{D}}_n^\sharp
\end{equation}
and
\begin{equation}\label{q12}
 \norm{\dt q}_{ 0,2n}^2 = \norm{\diverge u}_{ 0,2n}^2 \le \norm{   u}_{1,2n}^2\ls \bar{\mathcal{D}}_n^t+ \bar{\mathcal{D}}_n^\ast,
\end{equation}
and note that
 \begin{equation}
 \norm{\p_3^2 \eta_3  }_{k,2n-k-1}^2 = \norm{\p_3 (q+ \diverge_\ast \eta_\ast ) }_{k,2n-k-1}^2 \le \norm{ q}_{k+1,2n-k-1}^2 + \norm{\p_3   \eta_\ast }_{k,2n-k}^2
\end{equation}
and
 \begin{equation}
 \norm{\p_3^2 u_3  }_{k,2n-k-1}^2 = \norm{\p_3 (\dt q+ \diverge_\ast u_\ast ) }_{k,2n-k-1}^2 \le\norm{ \dt q}_{k+1,2n-k-1}^2 + \norm{  u_\ast }_{k+1,2n-k}^2,
\end{equation}
we deduce from \eqref{q1} that
\begin{align}\label{q2} 
&  \dtt  \norm{ \pa_3    \q }_{k,2n-k-1}^2  +  \norm{   \q }_{k+1,2n-k-1}^2
 +  \norm{  \dt  \q }_{k+1,2n-k-1}^2+ \norm{\p_3^2 u_3  }_{k,2n-k-1}^2+ \norm{\p_3^2 \eta_3  }_{k,2n-k-1}^2\nonumber
\\&\quad\ls   \norm{\pa_t  u_3 }_{2n-1}^2
+  \norm{    u }_{k+1,2n-k}^2+ \norm{\p_3   \eta_\ast }_{k,2n-k}^2+\norm{G_3}_{2n-1}^2 +\bar{\mathcal{D}}_n^\sharp. 
\end{align}
Next, we take the norm $\norm{\cdot}_{k,2n-k-1}^2$ of \eqref{etaode} to obtain
\begin{align}\label{q3} 
&\frac{\mu\kappa\bar b^2}{2}\frac{d}{dt}   \norm{  \p_3^2   \eta_\ast }_{k,2n-k-1}^2  + \kappa^2\bar b^4 \norm{  \p_3^2   \eta_\ast }_{k,2n-k-1}^2
 + \mu^2\norm{ \p_3^2  u_\ast }_{k,2n-k-1}^2\nonumber
  \\&\quad=\norm{-\bar \rho\partial_t u_\ast +\mu\Delta_\ast u_\ast  -(\mu+\mu')\nabla_\ast \dt q - P '(\bar{\rho} )\bar\rho \nabla_\ast q-\kappa\bar b^2\left( \nabla_\ast q+ \nabla_\ast \p_3\eta_3\right)+G_\ast}_{k,2n-k-1}^2\nonumber
\\
&\quad\ls  \norm{\pa_t  u_\ast  }_{2n-1}^2
+  \norm{   u_\ast }_{k,2n-k+1}^2+\norm{ \dt q }_{k,2n-k}^2+\norm{  q }_{k,2n-k}^2+\norm{   \p_3\eta_3 }_{k,2n-k}^2 +\norm{G_\ast}_{2n-1}^2.
\end{align}

Now combining \eqref{q2} and \eqref{q3} yields that for $k=0,\dots, 2n-1$,
\begin{align} 
& \frac{d}{dt} \left(  \norm{ \pa_3    \q }_{k,2n-k-1}^2+ \norm{  \p_3^2   \eta_\ast }_{k,2n-k-1}^2 \right)\nonumber
\\&\quad+\norm{ \dt q }_{k+1,2n-k-1}^2+\norm{  q }_{k+1,2n-k-1}^2+  \norm{  \p_3   \eta  }_{k+1,2n-k-1}^2
 + \norm{   u  }_{k+2,2n-k-1}^2
\\&\quad\ls  \norm{ \dt q }_{k,2n-k}^2+\norm{  q }_{k,2n-k}^2+  \norm{   u  }_{k+1,2n-k}^2+\norm{   \p_3\eta  }_{k,2n-k}^2 +\norm{\pa_t  u  }_{2n-1}^2
+\norm{G}_{2n-1}^2+ \bar{\mathcal{D}}_n^\sharp.\nonumber
\end{align}
By this recursive inequality on $k$, we conclude that there exist constants $\lambda_k>0,\ k=0,\dots, 2n-1$ such that, by \eqref{q11} and \eqref{q12} again,
 \begin{align}\label{q13} 
& \frac{d}{dt} \sum_{k=0}^{2n-1}\lambda_k\left(   \norm{ \pa_3    \q }_{k,2n-k-1}^2+ \norm{  \p_3^2   \eta_\ast }_{k,2n-k-1}^2 \right)\nonumber
\\&\quad+\sum_{k=0}^{2n-1}\left(\norm{ \dt q }_{k+1,2n-k-1}^2+ \norm{  q }_{k+1,2n-k-1}^2
  +   \norm{  \p_3   \eta  }_{k+1,2n-k-1}^2+  \norm{   u  }_{k+2,2n-k-1}^2\right)\nonumber
\\&\quad\ls  \norm{ \dt q }_{0,2n}^2+\norm{  q }_{0,2n}^2+  \norm{   u  }_{1,2n}^2+\norm{   \p_3\eta  }_{0,2n}^2 +\norm{\pa_t  u  }_{2n-1}^2
+\norm{G}_{2n-1}^2+\bar{\mathcal{D}}_n^\sharp\nonumber
\\&\quad\ls  \norm{\pa_t  u  }_{2n-1}^2
+\norm{G}_{2n-1}^2+\bar{\mathcal{D}}_n^t+\bar{\mathcal{D}}_n^\ast+\bar{\mathcal{D}}_n^\sharp. 
\end{align}
Hence if we define
\begin{equation}
\mathfrak{E}_n:=\sum_{k=0}^{2n-1}\lambda_k\left( \norm{ \pa_3    \q }_{k,2n-k-1}^2+ \norm{  \p_3^2   \eta_\ast }_{k,2n-k-1}^2 \right),
\end{equation}
then $\mathfrak{E}_n$ is equivalent to the sum $\norm{ \pa_3    \q }_{2n-1}^2+ \norm{  \p_3^2   \eta_\ast }_{2n-1}^2 $. \eqref{q13} implies in particular that
 \begin{align} 
&  \dtt\mathfrak{E}_n + \norm{ \dt q }_{2n}^2+ \norm{  q }_{2n}^2
+ \norm{  \p_3   \eta  }_{2n}^2+  \norm{   u  }_{2n+1}^2\nonumber
\\&\quad\ls  \norm{\pa_t  u  }_{2n-1}^2
+\norm{G}_{2n-1}^2+\bar{\mathcal{D}}_n^t+\bar{\mathcal{D}}_n^\ast+\bar{\mathcal{D}}_n^\sharp. 
\end{align}
Using \eqref{p_G_e_001c} to estimate $\norm{G}_{2n-1}^2\ls { \se{n}  }\sd{n}$  we conclude the estimate \eqref{qqest} by recalling that $q=-\diverge\eta$ and using Poincar\'e's inequality.
\end{proof}

\subsubsection{Elliptic regularity}

We now explore the elliptic regularity of the following Lam$\acute{e}$ system:
\begin{equation}\label{stokesp1c}
\begin{cases}
-\mu\Delta  {u}-(\mu+\mu')\nabla\diverge u= -\bar \rho\partial_t  u + P '(\bar{\rho} )\bar\rho \nabla \diverge  \eta
 &\\\quad+\kappa\bar b^2\left( \p_3^2 \eta- \p_3\diverge  \eta e_3+\nabla\diverge  \eta- \nabla\p_3 \eta_3\right) +  G& \text{in }
\Omega
\\   u=0 &\text{on }\p\Omega.
\end{cases}
\end{equation}

Note that the dissipation estimates of the solution without time derivatives have been already controlled in the estimate \eqref{qqest} of Proposition \ref{qqestprop}. We then improve the dissipation estimates of time derivatives.
\begin{prop}\label{p_upper_evolution  N'c}
For $n\ge 3$, it holds that
\begin{equation}\label{d2nc}
\sum_{j=1}^{n}\norm{ \dt^j u  }_{2n-2j+1}^2\ls  { \se{n}  }\sd{n}+\bar{\mathcal{D}}_{n}^t+\norm{ u   }_{2n-1 }^2 .
\end{equation}
\end{prop}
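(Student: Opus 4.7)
The plan is to derive the dissipation estimate for each $\dt^j u$ by applying $\dt^j$ to the Lam\'e problem \eqref{stokesp1c} and invoking the standard elliptic regularity for the Lam\'e operator $\mathcal{L}v := -\mu\Delta v - (\mu+\mu')\nabla\diverge v$ with Dirichlet boundary data. Using $\dt\eta = u$, so that $\dt^j\eta = \dt^{j-1}u$ for $j\ge 1$, this yields a Dirichlet problem for $\dt^j u$ with forcing of the schematic form $-\bar\rho\,\dt^{j+1}u + \mathcal{T}(\nabla^2 \dt^{j-1}u) + \dt^j G$, where $\mathcal{T}$ is a fixed second-order differential operator with coefficients $P'(\bar\rho)\bar\rho$ and $\kappa\bar b^2$ read off from \eqref{stokesp1c}.

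For $j=n$ the target $\norm{\dt^n u}_1^2$ sits directly inside $\bar{\mathcal{D}}_n^t$ and no further argument is needed. For $1\le j\le n-1$ the elliptic estimate $\norm{v}_{k+2}\ls\norm{\mathcal{L}v}_{k}$ with $k = 2n-2j-1\ge 0$ gives
\begin{equation*}
\norm{\dt^j u}_{2n-2j+1}^2 \ls \norm{\dt^{j+1}u}_{2n-2(j+1)+1}^2 + \norm{\dt^{j-1}u}_{2n-2j+1}^2 + \norm{\dt^j G}_{2n-2j-1}^2.
\end{equation*}
The $G$-contribution is absorbed via $\norm{\dt^j G}_{2n-2j-1}^2 \le \norm{\bar{\nab}_0^{2n-1}G}_0^2 \ls \se{n}\sd{n}$ using Lemma \ref{p_G_estimatesc}, and the $\dt^{j+1}u$-term is precisely the next summand in the LHS of \eqref{d2nc}.

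The crux is the middle term $\norm{\dt^{j-1}u}_{2n-2j+1}^2$. For $j=1$ it equals $\norm{u}_{2n-1}^2$, which is permitted on the RHS of \eqref{d2nc}. For $j\ge 2$ the regularity index $2n-2j+1$ is strictly below the natural dissipative index $2n-2(j-1)+1$, and I would exploit this slack by Sobolev interpolation between $\norm{\dt^{j-1}u}_1^2$ (contained in $\bar{\mathcal{D}}_n^t$) and $\norm{\dt^{j-1}u}_{2n-2(j-1)+1}^2$ with exponent $\theta_j = 1/(n-j+1)\in(0,1)$, followed by Young's inequality with a small parameter $\varepsilon>0$, to obtain $\norm{\dt^{j-1}u}_{2n-2j+1}^2 \le \varepsilon\norm{\dt^{j-1}u}_{2n-2(j-1)+1}^2 + C_\varepsilon \bar{\mathcal{D}}_n^t$.

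Combining these ingredients produces a tridiagonal recursion of the form $A_j^2 \le C_1 A_{j+1}^2 + C_1\varepsilon A_{j-1}^2 + R$, where $A_j := \norm{\dt^j u}_{2n-2j+1}$ and $R$ collects terms allowed on the RHS of \eqref{d2nc}. I would close this by forming the weighted sum $\sum_{j=1}^n \lambda^j A_j^2$: the shift $A_{j+1}\to A_j$ costs a factor $1/\lambda$ while the shift $A_{j-1}\to A_j$ costs $\lambda\varepsilon$, so fixing $\lambda$ large and then $\varepsilon$ so small that $C_1/\lambda + C_1\lambda\varepsilon < 1/2$ allows absorption of both coupling terms into the LHS, which together with $A_n^2\le\bar{\mathcal{D}}_n^t$ yields \eqref{d2nc}. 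The main obstacle is exactly this two-sided coupling generated by the $\nabla^2\dt^{j-1}u$ forcing, since neither direct upward nor downward induction closes on its own; the interpolation-Young-weighted-summation combination is what breaks the tie.
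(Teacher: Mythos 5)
Your proposal is correct and follows essentially the same route as the paper: apply $\dt^j$ to the Lam\'e problem \eqref{stokesp1c}, invoke the elliptic estimate of Lemma \ref{i_linear_lame}, bound the $\dt^j G$ forcing by $\se{n}\sd{n}$ via Lemma \ref{p_G_estimatesc}, and absorb the lower-order $\nabla^2\dt^{j}\eta=\nabla^2\dt^{j-1}u$ contribution through Sobolev interpolation and Young's inequality. The paper merely organizes the absorption differently --- it first sums the recursion over $j$ and then interpolates each resulting $\norm{\dt^{j}u}_{2n-2j-1}^2$ between $\norm{\dt^{j}u}_0^2$ and the matching left-hand term, rather than closing a weighted tridiagonal sum --- but this is the same argument.
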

\begin{proof}
We compactly write
\begin{equation}\label{nota1}
 \mathcal{Y}_n =  \ns{ \bar{\nab}_0^{2n-1} G}_{0}.
\end{equation}

Applying the time derivatives $\dt^{j},\ j=1,\dots,n-1$ to the problem \eqref{stokesp1c} and then employing the elliptic estimates \eqref{i_linear_lame} of Lemma \ref{lame eq} with $r=2n-2j+1\ge 3$ to the resulting problems, using the notation \eqref{nota1}, we obtain
\begin{align} \label{fes1}
\norm{\dt^j  u  }_{2n-2j+1}^2
 &\ls
\norm{ \dt^{j+1} u   }_{2n-2j-1 }^2+\norm{ \nabla^2 \dt^{j }\eta}_{2n-2j-1 }^2+ \norm{ \dt^j G   }_{2n-2j-1}^2\nonumber
\\&\ls   \norm{\dt^{j+1} u   }_{2n-2(j+1)+1 }^2 + \norm{\dt^{j }\eta  }_{2n-2 j +1 }^2 +  \mathcal{Y}_n. 
\end{align}
A simple induction on \eqref{fes1} yields, since $\dt \eta=u$
\begin{align}\label{eses11c} 
  \sum_{j=1}^{n}\norm{ \dt^j u  }_{2n-2j+1}^2
 & \ls \ns{\dt^{n} u}_{1}+ \sum_{j=1}^{n-1}\norm{\dt^{j }\eta  }_{2n-2 j +1 }^2   +  \mathcal{Y}_n\nonumber
\\&   \ls   \sum_{j=1}^{n-1}\norm{\dt^{j-1} u   }_{2n-2(j-1)-1 }^2 + \bar{\mathcal{D}}_{n}^t+\mathcal{Y}_n\nonumber
\\&  = \sum_{j=0}^{n-2}\norm{\dt^{j } u   }_{2n-2j-1 }^2   + \bar{\mathcal{D}}_{n}^t+\mathcal{Y}_n. 
\end{align}
Using the Sobolev interpolation and Young's inequality, we can improve \eqref{eses11c} to be
\begin{equation}
  \sum_{j=1}^{n}\norm{ \dt^j u  }_{2n-2j+1}^2
  \ls   \norm{  u   }_{2n-1}^2   +  \sum_{j=1}^{n-2}\norm{\dt^{j } u   }_{0}^2   + \bar{\mathcal{D}}_{n}^t+\mathcal{Y}_n\ls \norm{  u   }_{2n-1}^2   + \bar{\mathcal{D}}_{n}^t+\mathcal{Y}_n.
\end{equation}
Using \eqref{p_G_e_001c} to estimate $\mathcal{Y}_n\ls { \se{n}  }\sd{n}$, we then conclude \eqref{d2nc}.
\end{proof}

Now we improve the energy estimates.
\begin{prop}\label{e2ncprop}
For $n\ge 3$, it holds that
\begin{equation}\label{e2nc}
{\mathcal{E}}_{n}  \lesssim
\bar{\mathcal{E}}_n^t  +\bar{\mathcal{E}}_n^\sharp  +\mathfrak{E}_n + (\mathcal{E}_{n} )^{2}.
\end{equation}
\end{prop}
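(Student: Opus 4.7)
The plan is to decompose $\mathcal{E}_n=\|\eta\|_{2n+1}^2+\sum_{j=0}^n\|\partial_t^j u\|_{2n-2j}^2$ and bound (i) the $\eta$-piece via the horizontal energy $\bar{\mathcal{E}}_n^\sharp$ combined with the ODE energy $\mathfrak{E}_n$, and (ii) the $u$-pieces by elliptic regularity for the Lam\'e system \eqref{stokesp1c}, seeded by the temporal energy $\bar{\mathcal{E}}_n^t$. The nonlinear remainder $(\mathcal{E}_n)^2$ will arise solely from the forcing $G$.

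For the $\eta$-piece, I would split spatial multi-indices $\alpha\in\mathbb{N}^3$ with $|\alpha|\le 2n+1$ by the vertical order $\alpha_3$. When $\alpha_3\le 1$ the derivative $\partial^\alpha\eta$ sits inside $\|\nabla\eta\|_{0,2n}^2\lesssim\bar{\mathcal{E}}_n^\sharp$, with Poincar\'e handling the zero-order term via $\eta|_{\partial\Omega}=0$. When $\alpha_3\ge 2$ on the horizontal components, one writes $\partial^\alpha\eta_\ast=\partial^\beta(\partial_3^2\eta_\ast)$ with $|\beta|=|\alpha|-2\le 2n-1$, which is dominated by $\|\partial_3^2\eta_\ast\|_{2n-1}^2\lesssim\mathfrak{E}_n$. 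For $\eta_3$ with $\alpha_3\ge 2$ I would exploit the algebraic identity $\partial_3\eta_3=-q-\diverge_\ast\eta_\ast$ coming from $q:=-\diverge\eta$: this reduces $\partial^\alpha\eta_3$ to a derivative of $\partial_3 q$ (handled by $\|\partial_3 q\|_{2n-1}^2\lesssim\mathfrak{E}_n$, since $\alpha_3-2\ge 0$ lets us pull out one $\partial_3$), plus horizontal derivatives of $\partial_3^{\alpha_3-1}\eta_\ast$ that fall back into the previous $\eta_\ast$ case when $\alpha_3\ge 3$ or into the $\alpha_3\le 1$ case when $\alpha_3=2$. Summing over $\alpha$ yields $\|\eta\|_{2n+1}^2\lesssim\bar{\mathcal{E}}_n^\sharp+\mathfrak{E}_n$.

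For the $u$-pieces, set $X_j:=\|\partial_t^j u\|_{2n-2j}^2$; the top slot $X_n\subset\bar{\mathcal{E}}_n^t$ is immediate. For $0\le j\le n-1$, I would apply $\partial_t^j$ to \eqref{stokesp1c} and invoke the Lam\'e elliptic estimate at regularity index $r=2n-2j-2\ge 0$, using Lemma \ref{p_G_estimatesc} to bound $\|\partial_t^j G\|_r^2\le\|G\|_{2n-2}^2\lesssim(\mathcal{E}_n)^2$, to obtain
\begin{equation*}
X_j\lesssim X_{j+1}+\|\nabla^2\partial_t^j\eta\|_{2n-2j-2}^2+(\mathcal{E}_n)^2= X_{j+1}+\|\partial_t^j\eta\|_{2n-2j}^2+(\mathcal{E}_n)^2.
\end{equation*}
A finite downward induction from $j=n-1$ then yields $\sum_{j=0}^n X_j\lesssim X_n+\sum_{k=0}^{n-1}\|\partial_t^k\eta\|_{2n-2k}^2+(\mathcal{E}_n)^2$. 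The $k=0$ entry is $\|\eta\|_{2n}^2\le\|\eta\|_{2n+1}^2$, already handled. For $k\ge 1$, $\partial_t^k\eta=\partial_t^{k-1}u$ and Sobolev interpolation with Young's inequality gives $\|\partial_t^{k-1}u\|_{2n-2k}^2\le\varepsilon X_{k-1}+C_\varepsilon\|\partial_t^{k-1}u\|_0^2$, whose last piece lies in $\bar{\mathcal{E}}_n^t$. Choosing $\varepsilon$ small enough to absorb $\varepsilon\sum_j X_j$ into the left-hand side, and combining with the $\eta$-bound, delivers \eqref{e2nc}.

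The main obstacle is the apparent circularity of the Lam\'e bootstrap: the elliptic forcing naturally produces $\|\partial_t^{k-1}u\|_{2n-2k}^2$, only two spatial derivatives short of $X_{k-1}$ and hence not dominated directly by any lower-order energy. The interpolation trick trades a smallness factor $\varepsilon$ against an $L^2$ norm that lives in the base temporal energy $\bar{\mathcal{E}}_n^t$, and this is precisely what makes the finite downward induction close.
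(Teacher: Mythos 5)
Your proposal is correct and follows essentially the same route as the paper: Lam\'e elliptic regularity applied to $\partial_t^j$ of \eqref{stokesp1c}, a downward induction in $j$ closed by Sobolev interpolation against the $L^2$ pieces in $\bar{\mathcal{E}}_n^t$, and the bound $\ns{\eta}_{2n+1}\lesssim\bar{\mathcal{E}}_n^\sharp+\mathfrak{E}_n$. The only difference is that you spell out the case analysis in $\alpha_3$ behind that last bound (using $\partial_3\eta_3=-q-\diverge_\ast\eta_\ast$), which the paper simply asserts via Poincar\'e's inequality.
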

\begin{proof}
We compactly write
\begin{equation}\label{nota2}
 \mathcal{X}_n =   \ns{ \bar{\nab}^{2n-2}_0  G}_{0} .
\end{equation}

Applying the time derivatives $\dt^{j},\ j=0,\dots,n-1$ to the problem \eqref{stokesp1c} and then employing the elliptic estimates \eqref{i_linear_lame} of Lemma \ref{lame eq} with $r=2n-2j\ge 2$ to the resulting problems, using the notation \eqref{nota2}, we obtain
\begin{align}\label{fes3} 
 \norm{\dt^j  u  }_{2n-2j}^2 & \ls
\norm{ \dt^{j+1} u   }_{2n-2j-2 }^2+\norm{ \nabla^2 \dt^{j} \eta}_{2n-2j-2 }^2+ \norm{ \dt^j G   }_{2n-2j-2}^2\nonumber
  \\&\ls   \norm{\dt^{j+1} u   }_{2n-2(j+1) }^2 +\norm{   \dt^{j} \eta}_{2n-2j  }^2  + \mathcal{X}_n. 
\end{align}
A simple induction on \eqref{fes3} yields, since $\dt\eta=u$,
\begin{align}\label{fes4} 
\sum_{j=0}^{n}\norm{ \dt^j u  }_{2n-2j }^2
 &  \ls \ns{\dt^{n} u}_{0} +\sum_{j=0}^{n-1}\norm{ \dt^j \eta  }_{2n-2j }^2  +\mathcal{X}_n\nonumber
\\&   \le \bar{\mathcal{E}}_n^t +\norm{  \eta  }_{2n  }^2 +\sum_{j=1}^{n-1}\norm{ \dt^{j-1} u  }_{2n-2j }^2    +\mathcal{X}_n\nonumber
\\&  \ls  \sum_{j=0}^{n-2}\norm{ \dt^{j} u  }_{2n-2j-2}^2     +\bar{\mathcal{E}}_n^t  + \bar{\mathcal{E}}_n^\sharp  +\mathfrak{E}_n  + \mathcal{X}_n. 
\end{align}
Here we have used the fact $\norm{  \eta  }_{2n+1}^2\ls \bar{\mathcal{E}}_n^\sharp  +\mathfrak{E}_n$ by Poincar\'e's inequality.
Using the Sobolev interpolation and Young's inequality, we can improve \eqref{fes4} to be
\begin{equation}
  \sum_{j=0}^{n}\norm{ \dt^j u  }_{2n-2j }^2
 \ls  \sum_{j=0}^{n-2}\norm{ \dt^{j} u  }_{0}^2   + \bar{\mathcal{E}}_n^t +\mathfrak{E}_n + \bar{\mathcal{E}}_n^\sharp  + \mathcal{X}_n
\ls   \bar{\mathcal{E}}_n^t +\bar{\mathcal{E}}_n^\sharp  +\mathfrak{E}_n +  \mathcal{X}_n.
 \end{equation}
Using \eqref{p_G_e_0c} to estimate $\mathcal{X}_n\ls ({ \se{n}  })^2$, we then conclude \eqref{e2nc}.
\end{proof}

\subsubsection{Synthesis}

We now chain all the estimates derived previously to conclude the following.
\begin{prop}
For $n=N+2$ or $2N$, there exists an energy $\tilde{ \mathcal{E}}_{n}$ which is equivalent to $\mathcal{E}_{n}$ such that
\begin{equation}\label{sys2nc}
\frac{d}{dt}\tilde{ \mathcal{E}}_{2N}+ {\mathcal{D}}_{2N}  \ls     \sqrt{ \se{N+2}  }{\mathcal{E}}_{2N}
\end{equation}
and
\begin{equation}\label{sysn+2c}
\frac{d}{dt}\tilde{ \mathcal{E}}_{N+2}+ {\mathcal{D}}_{N+2}  \le 0.
\end{equation}
\end{prop}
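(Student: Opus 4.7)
The plan is to build $\tilde{\mathcal{E}}_n$ as a weighted combination of the four component energies appearing in Propositions \ref{i_temporal_evolution  Nc}, \ref{p_upper_evolution  N12c}, \ref{p_upper_evolution  N'132c}, and \ref{qqestprop}, and then invoke Proposition \ref{p_upper_evolution  N'c} to upgrade the dissipation and Proposition \ref{e2ncprop} to transfer the energy equivalence. Concretely, for a small parameter $\lambda\in(0,1)$ to be chosen, I would set
\begin{equation*}
\tilde{\mathcal{E}}_n := \bar{\mathcal{E}}_n^{t} + \bar{\mathcal{E}}_n^{\ast}
+ \lambda\Bigl(\bar{\mathcal{E}}_n^{\sharp} + 2\sum_{\substack{\al\in\mathbb{N}^2\\ |\al|\le 2n}}\int_\Omega \bar\rho\, \pa^\al u\cdot\pa^\al \eta\Bigr) + \lambda^2\mathfrak{E}_n,
\end{equation*}
and add Proposition \ref{i_temporal_evolution  Nc}, Proposition \ref{p_upper_evolution  N12c}, $\lambda$ times Proposition \ref{p_upper_evolution  N'132c}, and $\lambda^2$ times Proposition \ref{qqestprop}. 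The resulting left-hand side carries a positive dissipation of order $\bar{\mathcal{D}}_n^t+\bar{\mathcal{D}}_n^\ast+\lambda\bar{\mathcal{D}}_n^\sharp+\lambda^2(\norm{u}_{2n+1}^2+\norm{\diverge\eta}_{2n}^2+\norm{\p_3\eta}_{2n}^2+\norm{\eta}_{2n}^2)$, while the cross terms $\bar{\mathcal{D}}_n^t,\bar{\mathcal{D}}_n^\ast,\bar{\mathcal{D}}_n^\sharp$ that reappear on the right of Propositions \ref{p_upper_evolution  N'132c} and \ref{qqestprop} come with coefficients $O(\lambda)$ and $O(\lambda^2)$, so they are absorbed into the LHS once $\lambda$ is small.

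The remaining undesirable term on the right is $\lambda^2\norm{\dt u}_{2n-1}^2$ inherited from \eqref{qqest}, which is not directly controlled by $\bar{\mathcal{D}}_n^t$ (which only sees $H^1$ in space). To handle it I would add a small multiple $\ep$ of Proposition \ref{p_upper_evolution  N'c}, whose static estimate $\sum_{j=1}^n\norm{\dt^j u}_{2n-2j+1}^2\ls\se{n}\sd{n}+\bar{\mathcal{D}}_n^t+\norm{u}_{2n-1}^2$ supplies the missing higher-order time-derivative dissipation on the LHS while producing only $\ep\bar{\mathcal{D}}_n^t$ and $\ep\norm{u}_{2n-1}^2\le\ep\norm{u}_{2n+1}^2$ on the right, both absorbed by the same mechanism. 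At this stage the LHS dominates the full $\mathcal{D}_n$, and the net differential inequality reads
\begin{equation*}
\dtt\tilde{\mathcal{E}}_n+\mathcal{D}_n\ls \sqrt{\se{n}}\,\sd{n}+\mathcal{R}_n,
\end{equation*}
with remainder $\mathcal{R}_n=\sqrt{\se{N+2}}\se{2N}$ when $n=2N$ and $\mathcal{R}_n=\sqrt{\se{2N}}\sd{N+2}$ when $n=N+2$. The a priori smallness $\mathcal{G}_{2N}(T)\le\delta$ then absorbs $\sqrt{\se{n}}\sd{n}$ into $\mathcal{D}_n$, and for $n=N+2$ also absorbs $\sqrt{\se{2N}}\sd{N+2}$, yielding \eqref{sysn+2c}; for $n=2N$ the term $\sqrt{\se{N+2}}\se{2N}$ cannot be absorbed (since $\se{2N}$ is not controlled by $\sd{2N}$) and survives on the right to give \eqref{sys2nc}.

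The equivalence $\tilde{\mathcal{E}}_n\sim\mathcal{E}_n$ comes from Proposition \ref{e2ncprop}: in the small-energy regime the quadratic remainder $(\mathcal{E}_n)^2\le\delta\mathcal{E}_n$ absorbs into the left, giving $\mathcal{E}_n\ls\bar{\mathcal{E}}_n^t+\bar{\mathcal{E}}_n^\sharp+\mathfrak{E}_n$, while the cross integral $\int\bar\rho\,\pa^\al u\cdot\pa^\al\eta$ in $\tilde{\mathcal{E}}_n$ is estimated via Cauchy--Schwarz and Poincar\'e against $\bar{\mathcal{E}}_n^t+\bar{\mathcal{E}}_n^\sharp$, so $\tilde{\mathcal{E}}_n\sim \bar{\mathcal{E}}_n^t+\bar{\mathcal{E}}_n^\ast+\lambda\bar{\mathcal{E}}_n^\sharp+\lambda^2\mathfrak{E}_n\sim \mathcal{E}_n$ up to constants depending on $\lambda$. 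The main technical obstacle is calibrating the hierarchy $\ep\ll\lambda^2\ll\lambda\ll 1$, with $\delta$ then taken small relative to these, so that every cross-dissipation term is absorbed with a strictly positive residual on the LHS; once this hierarchy is in place the argument is routine bookkeeping.
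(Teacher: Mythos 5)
Your construction of $\tilde{\mathcal{E}}_n$ and the weighting scheme are essentially identical to the paper's: the paper forms the same combination $\bar{\mathcal{E}}_n^t+\bar{\mathcal{E}}_n^\ast+\epsilon(\bar{\mathcal{E}}_n^\sharp+2\sum\int\bar\rho\,\p^\al u\cdot\p^\al\eta)+\epsilon^2\mathfrak{E}_n$, absorbs the cross-dissipation terms by taking $\epsilon$ small, uses Sobolev interpolation plus Young to dispose of $\norm{u}_{2n-1}^2$, and invokes Proposition \ref{e2ncprop} with the smallness of $\mathcal{E}_{2N}$ for the equivalence. The treatment of the nonlinearities and the splitting into $\mathcal{Z}_{2N}$ versus $\mathcal{Z}_{N+2}$ also match.

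There is, however, one concrete miscalibration: your stated hierarchy $\ep\ll\lambda^2\ll\lambda\ll 1$ is incompatible with the single absorption that genuinely constrains the ordering of the weights. The term $\lambda^2\norm{\dt u}_{2n-1}^2$ appearing on the right from $\lambda^2$ times \eqref{qqest} must be absorbed by the contribution $\ep\norm{\dt u}_{2n-1}^2$ that your $\ep$ multiple of Proposition \ref{p_upper_evolution  N'c} places on the left (the $j=1$ term of $\sum_{j=1}^n\norm{\dt^j u}_{2n-2j+1}^2$). This forces $\lambda^2\ls\ep$, the opposite of what you wrote; and unlike the term $\norm{u}_{2n-1}^2$, there is no interpolation escape here, because $\norm{\dt u}_{2n-1}$ is already the top-order norm of $\dt u$ available in $\sd{n}$, so it cannot be traded against a higher norm via Young's inequality. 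The fix is immediate and costs nothing elsewhere: either take $\ep$ to be a suitably large fixed multiple of $\lambda^2$ (the absorptions of $\ep\,\bar{\mathcal{D}}_n^t$ and $\ep\norm{u}_{2n-1}^2$ on the right still go through, the latter by interpolating between $\norm{u}_0^2\ls\bar{\mathcal{D}}_n^t$ and $\lambda^2\norm{u}_{2n+1}^2$ with a Young parameter depending on $\lambda$), or do as the paper does and first combine \eqref{qqest} with \eqref{d2nc} at a fixed relative constant to produce a single inequality $\dtt\mathfrak{E}_n+\mathcal{D}_n\ls\se{n}\sd{n}+\bar{\mathcal{D}}_n^t+\bar{\mathcal{D}}_n^\ast+\bar{\mathcal{D}}_n^\sharp$ before attaching the outer weight $\epsilon^2$. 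With that correction your argument reproduces the paper's proof.
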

\begin{proof}
We first deduce from \eqref{qqest} of Proposition \ref{qqestprop} and \eqref{d2nc} of Proposition \ref{p_upper_evolution  N'c} that
 \begin{equation}\label{fes5}
 \dtt\mathfrak{E}_n  + \mathcal{D}_n
 \ls { \se{n}  }\sd{n}+\norm{ u  }_{2n-1}^2
+\bar{\mathcal{D}}_n^t+\bar{\mathcal{D}}_n^\ast+\bar{\mathcal{D}}_n^\sharp.
\end{equation}
We may use the Sobolev interpolation and Young's inequality to improve \eqref{fes5} to be
 \begin{equation}\label{fes6}
 \dtt\mathfrak{E}_n  + \mathcal{D}_n
 \ls { \se{n}  }\sd{n}+\norm{ u  }_{0}^2
+\bar{\mathcal{D}}_n^t+\bar{\mathcal{D}}_n^\ast+\bar{\mathcal{D}}_n^\sharp \ls { \se{n}  }\sd{n}
+\bar{\mathcal{D}}_n^t+\bar{\mathcal{D}}_n^\ast+\bar{\mathcal{D}}_n^\sharp.
\end{equation}

We now let $n$ denote either $2N$ or $N+2$ through the proof, and we use the compact notation
\begin{equation}
\mathcal{Z}_n\text{ with }\mathcal{Z}_{2N}:=\sqrt{ \se{2N}  }\sd{ 2N}+\sqrt{ \se{N+2}  }\se{ 2N} \text{ and }\mathcal{Z}_{N+2}:=\sqrt{ \se{2N}  }\sd{ N+2} .
\end{equation}
We then deduce from Propositions \ref{i_temporal_evolution  Nc}, \ref{p_upper_evolution  N12c}, \ref{p_upper_evolution  N'132c} and \eqref{fes6} that for $0<\epsilon\ll 1$,
\begin{align} \label{i_te_02nc} 
 & \frac{d}{dt}\left(\bar{\mathcal{E}}_{n}^t+ \bar{\mathcal{E}}_{n}^*+\epsilon\left( \bar{\mathcal{E}}_{n}^\sharp+2\sum_{\substack{\al\in \mathbb{N}^2\\  |\al|\le 2n}}\int_\Omega \bar\rho \partial^\alpha     u  \cdot \partial^\alpha  \eta\right)+\epsilon^2 \mathfrak{E}_{n} \right)
+ \bar{\mathcal{D}}_{n}^t+\bar{\mathcal{D}}_{n}^*+\epsilon \bar{\mathcal{D}}_{n}^\sharp+\epsilon^2  {\mathcal{D}}_{n}\nonumber
 \\&\quad \ls   \mathcal{Z}_n+ \epsilon(\bar{\mathcal{D}}_{n}^t+\bar{\mathcal{D}}_{n}^{\ast} )+\epsilon^2(\bar{\mathcal{D}}_{n}^t+\bar{\mathcal{D}}_{n}^*+ \bar{\mathcal{D}}_{n}^\sharp). 
\end{align}
Taking $\epsilon>0$ sufficiently small, we obtain
\begin{equation} \label{i_te_02n1c}
\begin{split}
 \frac{d}{dt}\left(\bar{\mathcal{E}}_{n}^t+ \bar{\mathcal{E}}_{n}^*+\epsilon\left( \bar{\mathcal{E}}_{n}^\sharp+2\sum_{\substack{\al\in \mathbb{N}^2\\  |\al|\le 2n}}\int_\Omega \bar\rho \partial^\alpha     u  \cdot \partial^\alpha  \eta\right)+\epsilon^2 \mathfrak{E}_{n} \right)
+ \bar{\mathcal{D}}_{n}^t+\bar{\mathcal{D}}_{n}^*+\epsilon \bar{\mathcal{D}}_{n}^\sharp+\epsilon^2  {\mathcal{D}}_{n}
 \ls   \mathcal{Z}_n .
  \end{split}
\end{equation}

We now define
\begin{equation}
\tilde{ \mathcal{E}}_{n}:=\bar{\mathcal{E}}_{n}^t+ \bar{\mathcal{E}}_{n}^*+\epsilon\left( \bar{\mathcal{E}}_{n}^\sharp+2\sum_{\substack{\al\in \mathbb{N}^2\\  |\al|\le 2n}}\int_\Omega \bar\rho \partial^\alpha     u  \cdot \partial^\alpha  \eta\right)+\epsilon^2 \mathfrak{E}_{n}.
\end{equation}
By Proposition \ref{e2ncprop}, we know that for fixed sufficiently small $\epsilon>0$,
\begin{equation}
\mathcal{E}_{n}\ls \tilde{ \mathcal{E}}_{n}+(\mathcal{E}_{n} )^{2},
\end{equation}
which implies that $\tilde{ \mathcal{E}}_{n}$ is equivalent to $ \mathcal{E}_{n}$ since ${\mathcal{E}}_{2N} (T)\le \delta$ is small. We thus deduce \eqref{sys2nc} and \eqref{sysn+2c} from \eqref{i_te_02n1c} by recalling the notation $\mathcal{Z}_n$ and using again that ${\mathcal{E}}_{2N} (T)\le \delta$ is small.
\end{proof}

\subsection{Global energy estimates}

In this subsection, we shall conclude our global energy estimates of the solution to \eqref{reformulationc}.

 We first show the boundedness of $\mathcal{E}_{2N} +\int_0^t\mathcal{D}_{2N} $.

\begin{prop} \label{Dglec}
There exists a universal constant $0<\delta<1$ so that if $\mathcal{G}_{2N}(T)\le\delta$, then
\begin{equation}\label{Dgc}
\mathcal{E}_{2N} (t)+\int_0^t\mathcal{D}_{2N} \lesssim
\mathcal{E}_{2N} (0)    \text{ for all
}0\le t\le T.
\end{equation}
\end{prop}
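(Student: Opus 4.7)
The plan is to integrate the first differential inequality \eqref{sys2nc} in time and then close the estimate by a Gronwall-type argument, using the assumption $\mathcal{G}_{2N}(T)\le \delta$ to control the nonlinear source. Since $\tilde{\mathcal{E}}_{2N}$ is equivalent to $\mathcal{E}_{2N}$, it suffices to prove the analogous bound for $\tilde{\mathcal{E}}_{2N}$.

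First, I would integrate \eqref{sys2nc} from $0$ to $t$ to obtain
\begin{equation}\nonumber
\tilde{\mathcal{E}}_{2N}(t) + \int_0^t \mathcal{D}_{2N}(r)\,dr \lesssim \tilde{\mathcal{E}}_{2N}(0) + \int_0^t \sqrt{\mathcal{E}_{N+2}(r)}\,\mathcal{E}_{2N}(r)\,dr.
\end{equation}
Next, I would extract decay information on $\mathcal{E}_{N+2}$ directly from the hypothesis. By the definition \eqref{G_defc} of $\mathcal{G}_{2N}$, the assumption $\mathcal{G}_{2N}(T)\le\delta$ implies $(1+r)^{2N-4}\mathcal{E}_{N+2}(r)\le\delta$ for all $0\le r\le T$, hence
\begin{equation}\nonumber
\sqrt{\mathcal{E}_{N+2}(r)} \le \sqrt{\delta}\,(1+r)^{-(N-2)}.
\end{equation}
Since $N\ge 4$ gives $N-2\ge 2$, the function $r\mapsto (1+r)^{-(N-2)}$ is integrable on $[0,\infty)$ with $\int_0^\infty (1+r)^{-(N-2)}\,dr\le 1$, so $\int_0^t \sqrt{\mathcal{E}_{N+2}(r)}\,dr \le C\sqrt{\delta}$ uniformly in $t$.

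Using the equivalence $\mathcal{E}_{2N}\sim \tilde{\mathcal{E}}_{2N}$, the displayed integral inequality then has the form
\begin{equation}\nonumber
\tilde{\mathcal{E}}_{2N}(t) \lesssim \tilde{\mathcal{E}}_{2N}(0) + \int_0^t h(r)\,\tilde{\mathcal{E}}_{2N}(r)\,dr, \qquad h(r):=C\sqrt{\mathcal{E}_{N+2}(r)},
\end{equation}
with $\int_0^\infty h(r)\,dr\le C\sqrt{\delta}$. Gronwall's inequality then yields
\begin{equation}\nonumber
\tilde{\mathcal{E}}_{2N}(t) \lesssim \tilde{\mathcal{E}}_{2N}(0)\exp\!\Big(C\sqrt{\delta}\Big)\lesssim \tilde{\mathcal{E}}_{2N}(0)\lesssim \mathcal{E}_{2N}(0),
\end{equation}
after choosing $\delta$ small. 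Plugging this pointwise bound back into the original integrated inequality controls the dissipation integral,
\begin{equation}\nonumber
\int_0^t \mathcal{D}_{2N}(r)\,dr \lesssim \mathcal{E}_{2N}(0) + \sqrt{\delta}\sup_{0\le r\le t}\mathcal{E}_{2N}(r)\cdot \int_0^t(1+r)^{-(N-2)}\,dr \lesssim \mathcal{E}_{2N}(0),
\end{equation}
which combined with the previous bound yields \eqref{Dgc}.

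There is no real obstacle here: the hard analytic work has been done in building the two-tier structure \eqref{sys2nc}--\eqref{sysn+2c} and in establishing the equivalence $\tilde{\mathcal{E}}_{2N}\sim \mathcal{E}_{2N}$. The only thing to verify carefully is that $N\ge 4$ is exactly what is needed to make $(1+r)^{-(N-2)}$ integrable so that the Gronwall exponential is finite; this is the point at which the choice of the decay rate $2N-4$ in the definition of $\mathcal{G}_{2N}$ pays off. The smallness of $\delta$ absorbs the constant in the exponential and guarantees that $\tilde{\mathcal{E}}_{2N}$ stays comparable to $\mathcal{E}_{2N}$ throughout.
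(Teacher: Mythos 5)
Your argument is correct and follows essentially the same route as the paper: integrate \eqref{sys2nc} in time and exploit the integrable decay $\sqrt{\mathcal{E}_{N+2}(r)}\le\sqrt{\delta}(1+r)^{-(N-2)}$ that the hypothesis $\mathcal{G}_{2N}(T)\le\delta$ encodes, which is exactly where $N\ge 4$ enters. The only immaterial difference is that you close via Gronwall's inequality, whereas the paper simply bounds $\mathcal{E}_{2N}(r)$ by $\sup_{0\le r\le t}\mathcal{E}_{2N}(r)$ inside the time integral and absorbs the resulting $\sqrt{\delta}\sup_{0\le r\le t}\mathcal{E}_{2N}(r)$ term into the left-hand side using the smallness of $\delta$.
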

\begin{proof}
Integrating \eqref{sys2nc} directly in time, we find that
\begin{align} 
    {\mathcal{E}}_{2N}
+ \int_0^t{\mathcal{D}}_{2N}
&\ls {\mathcal{E}}_{2N}(0)+\int_0^t \sqrt{ \se{N+2}  }  \mathcal{E}_{2N}\nonumber
\\& \ls {\mathcal{E}}_{2N}(0)+\sup_{0\le r\le t}\mathcal{E}_{2N}(r)
 \int_0^t \sqrt{\delta}(1+r)^{-N+2}dr\nonumber
 \\& \ls {\mathcal{E}}_{2N}(0)+\sqrt{\delta}\sup_{0\le r\le t}\mathcal{E}_{2N}(r).
\end{align}
Here we have used the fact that $N\ge 4$. This proves the estimate \eqref{Dgc} since $\delta$ is small.
\end{proof}

It remains to show the decay estimates of
$\mathcal{E}_{N+2} $.

\begin{prop} \label{decaylmc}
There exists a universal constant $0<\delta<1$ so that if $\mathcal{G}_{2N}(T)\le\delta$, then
\begin{equation}\label{n+2c}
(1+t)^{2N-4} \mathcal{E}_{N+2} (t)\lesssim
\mathcal{E}_{2N} (0)  \ \text{for all
}0\le t\le T.
\end{equation}
\end{prop}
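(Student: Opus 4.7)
The plan is to combine the differential inequality \eqref{sysn+2c}, $\frac{d}{dt}\tilde{\mathcal{E}}_{N+2} + \sd{N+2} \le 0$ (with $\tilde{\mathcal{E}}_{N+2}$ equivalent to $\se{N+2}$), together with a Sobolev interpolation that expresses $\se{N+2}$ as a superlinear power of $\sd{N+2}$ at the cost of a factor of $\se{2N}$. Proposition \ref{Dglec} then absorbs $\se{2N}(t)$ into the constant $\se{2N}(0)$, and the resulting Bernoulli-type ODE yields the algebraic decay at the claimed rate $(1+t)^{-(2N-4)}$.

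First I would compare $\se{N+2}$ and $\sd{N+2}$ term by term. All the $\dt^j u$ contributions to $\se{N+2}$ are directly dominated by the corresponding terms in $\sd{N+2}$, since the dissipation carries one extra spatial derivative. The only delicate contribution is $\ns{\eta}_{2(N+2)+1}=\ns{\eta}_{2N+5}$, which lies in $\se{N+2}$ but not in $\sd{N+2}$; the dissipation only provides $\ns{\eta}_{2N+4}$. Interpolating against the top-order norm $\ns{\eta}_{4N+1}$ appearing in $\se{2N}$, the standard Sobolev interpolation inequality with exponent $\theta=1/(2N-3)$ (determined by $2N+5=(1-\theta)(2N+4)+\theta(4N+1)$) gives
\begin{equation*}
\ns{\eta}_{2N+5} \lesssim \ns{\eta}_{2N+4}^{(2N-4)/(2N-3)}\, \ns{\eta}_{4N+1}^{1/(2N-3)} \lesssim \sd{N+2}^{(2N-4)/(2N-3)}\, \se{2N}^{1/(2N-3)}.
\end{equation*}
Since $\sd{N+2}\lesssim \se{2N}$ for $N\ge 3$ (every norm appearing in $\sd{N+2}$ is dominated by one in $\se{2N}$), the direct $\dt^j u$ bounds are absorbed by the same interpolation, yielding the key estimate
\begin{equation*}
\se{N+2} \lesssim \sd{N+2}^{(2N-4)/(2N-3)}\, \se{2N}^{1/(2N-3)}.
\end{equation*}

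Next I would invoke Proposition \ref{Dglec}, which gives $\se{2N}(t)\lesssim \se{2N}(0)$, and rearrange to obtain
\begin{equation*}
\sd{N+2} \gtrsim \tilde{\mathcal{E}}_{N+2}^{(2N-3)/(2N-4)}\, \se{2N}(0)^{-1/(2N-4)}.
\end{equation*}
Substituting into \eqref{sysn+2c} produces the closed Bernoulli-type ODE
\begin{equation*}
\tfrac{d}{dt}\tilde{\mathcal{E}}_{N+2} + c\, \se{2N}(0)^{-1/(2N-4)}\, \tilde{\mathcal{E}}_{N+2}^{1+1/(2N-4)} \le 0.
\end{equation*}
Dividing by $\tilde{\mathcal{E}}_{N+2}^{1+1/(2N-4)}$ and integrating in time,
\begin{equation*}
\tilde{\mathcal{E}}_{N+2}(t)^{-1/(2N-4)} \ge \tilde{\mathcal{E}}_{N+2}(0)^{-1/(2N-4)} + c'\, t\, \se{2N}(0)^{-1/(2N-4)} \gtrsim (1+t)\, \se{2N}(0)^{-1/(2N-4)},
\end{equation*}
where the last inequality uses $\tilde{\mathcal{E}}_{N+2}(0)\lesssim \se{N+2}(0)\le \se{2N}(0)$. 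Raising to the power $-(2N-4)$ yields $\se{N+2}(t)\sim\tilde{\mathcal{E}}_{N+2}(t)\lesssim (1+t)^{-(2N-4)}\se{2N}(0)$, which is \eqref{n+2c}.

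The main technical step is the interpolation for the $\eta$ term; everything else is either immediate comparison of norms or a routine ODE integration. The choice $N\ge 4$ is precisely what makes the interpolation $2N+5\le 4N+1$ nondegenerate with $\theta\in(0,1)$, and what makes the resulting decay exponent $2N-4\ge 4$ large enough to sustain the bootstrap already used in Proposition \ref{Dglec}.
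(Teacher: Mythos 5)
Your proof is correct and follows essentially the same route as the paper: the same Sobolev interpolation $\se{N+2}\lesssim (\sd{N+2})^{(2N-4)/(2N-3)}(\se{2N})^{1/(2N-3)}$ (the paper isolates the purely horizontal top derivative $\nabla_\ast^{2(N+2)+1}\eta$ where you interpolate the full norm $\norm{\eta}_{2N+5}$, but the exponent and conclusion are identical), followed by the same Bernoulli-type ODE argument with $s=1/(2N-4)$ and the bound $\sup_t\se{2N}\lesssim\se{2N}(0)$ from Proposition \ref{Dglec}.
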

\begin{proof}
We will use \eqref{sysn+2c} to derive the decay estimates. For this, we shall estimate $\mathcal{E}_{N+2} $ in terms of $\mathcal{D}_{N+2} $. Notice that ${\mathcal{D}}_{N+2} $ can control every term in ${\mathcal{E}}_{N+2} $ except $ \ns{\nabla_\ast^{2(N+2)+1}\eta}_{0}$. The key point is to use the Sobolev interpolation as \cite{RG,GT_per}. Indeed, we first have that
\begin{align} \label{intep0c} 
\ns{\nabla_\ast^{2(N+2)+1}\eta}_{0}&\le  \norm{\nabla_\ast^{2(N+2)}\eta}_{0} ^{2\theta} \norm{\nabla_\ast^{4N+1}\eta}_{0}^{2(1-\theta)}\nonumber
\\&\le ( {\mathcal{D}_{N+2} })^\theta({\mathcal{E}_{2N} })^{1-\theta},\text{
where }\theta=\frac{2N-4}{2N-3}. 
\end{align}
Hence, we may deduce
\begin{equation} \label{intepc}
{\mathcal{E}}_{N+2} \le({\mathcal{D}}_{N+2} )^\theta({\mathcal{E}}_{2N} )^{1-\theta}.
\end{equation}

Now since by Proposition \ref{Dglec},
\begin{equation}
\sup_{0\le r\le t}\mathcal{E}_{2N} (r)\lesssim
\mathcal{E}_{2N} (0) :=\mathcal{M}_0,
\end{equation}
we obtain from  \eqref{intepc} that
\begin{equation} \label{u2c}
\tilde{\mathcal{E}}_{N+2} \lesssim\mathcal{E}_{N+2} \lesssim\mathcal{M}_0^{1-\theta}
(\mathcal{D}_{N+2} )^\theta.
\end{equation}
Hence by \eqref{sysn+2c} and \eqref{u2c}, there exists some constant $C>0$
such that
\begin{equation}
\frac{d}{dt} \tilde{\mathcal{E}}_{N+2} +\frac{C}{\mathcal{M}_0^s}
(\tilde{\mathcal{E}}_{N+2} )^{1+s}\le 0,\ \text{ where } s =
\frac{1}{\theta}-1 = \frac{1}{2N-4}.
\end{equation}
Solving this differential inequality directly, we obtain
\begin{equation} \label{u3c}
\mathcal{E}_{N+2} (t)\ls \tilde{\mathcal{E}}_{N+2} (t)\ls \frac{\mathcal{M}_0}{(\mathcal{M}_0^s
+ s C( \mathcal{E}_{N+2} (0))^s t)^{1/s} }
{\mathcal{E}}_{N+2} (0).
\end{equation}
Using that ${\mathcal{E}}_{N+2} (0)\lesssim\mathcal{M}_0 $ and
the fact $1/s=2N-4>1$, we obtain from \eqref{u3c} that
\begin{equation}
{\mathcal{E}}_{N+2} (t)\lesssim
\frac{\mathcal{M}_0}{(1+sCt)^{1/s} }\lesssim
\frac{\mathcal{M}_0}{(1+t^{1/s}) } =
\frac{\mathcal{M}_0}{(1+t^{2N-4}) }.
\end{equation}
This directly implies \eqref{n+2c}.
\end{proof}

Now we can arrive at our ultimate energy estimates for
$\mathcal{G}_{2N} $.
\begin{thm}\label{Apc}
There exists a universal $0 < \delta < 1$ so that if $
\mathcal{G}_{2N} (T) \le \delta$, then
\begin{equation}\label{Aprioric}
\mathcal{G}_{2N} (t) \ls\mathcal{E}_{2N} (0) \text{ for all }0 \le t \le
 T.
\end{equation}
\end{thm}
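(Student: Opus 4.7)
The plan is to obtain Theorem \ref{Apc} as an immediate consequence of the two preceding Propositions \ref{Dglec} and \ref{decaylmc}, which between them already control every summand in $\mathcal{G}_{2N}(t)$. Recall
\begin{equation*}
\mathcal{G}_{2N}(t) = \sup_{0 \le r \le t}\se{2N}(r) + \int_0^t \sd{2N}(r)\,dr + \sup_{0 \le r \le t}(1+r)^{2N-4}\se{N+2}(r),
\end{equation*}
so it suffices to bound each of these three pieces by a multiple of $\se{2N}(0)$.

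First, I would invoke Proposition \ref{Dglec}: under the standing hypothesis $\mathcal{G}_{2N}(T)\le\delta$, we have
\begin{equation*}
\se{2N}(t) + \int_0^t \sd{2N}(r)\,dr \ls \se{2N}(0) \quad \text{for all } 0\le t\le T.
\end{equation*}
The right-hand side is independent of $t$, so taking the supremum over $r\in[0,t]$ on the first summand and monotonicity of the time integral immediately produces the desired bounds on the first two pieces of $\mathcal{G}_{2N}(t)$.

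Next, Proposition \ref{decaylmc} gives $(1+t)^{2N-4}\se{N+2}(t)\ls \se{2N}(0)$, and taking the supremum over $r\in[0,t]$ controls the third piece of $\mathcal{G}_{2N}(t)$ by $\se{2N}(0)$. Summing the three contributions yields $\mathcal{G}_{2N}(t)\ls \se{2N}(0)$ uniformly on $[0,T]$. Both propositions required only the same smallness hypothesis $\mathcal{G}_{2N}(T)\le\delta$, so we may take $\delta$ to be the minimum of the two universal constants appearing there; no further smallness adjustment is required.

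There is no genuine obstacle at this stage, since the real work has already been carried out upstream: the boundedness of $\se{2N}$ in Proposition \ref{Dglec} uses the decay bound $\sqrt{\se{N+2}(r)}\ls (1+r)^{-(N-2)}$ coming from Proposition \ref{decaylmc} to make the forcing term in \eqref{sys2nc} time-integrable (which is precisely why $N\ge 4$ is needed), and the decay in Proposition \ref{decaylmc} relies on the interpolation $\se{N+2}\ls (\sd{N+2})^\theta(\se{2N})^{1-\theta}$ together with the differential inequality \eqref{sysn+2c}. The only thing to verify carefully is that this is not circular: Proposition \ref{decaylmc} uses only the bound $\sup_{0\le r\le t}\se{2N}(r)\ls \se{2N}(0)$ from Proposition \ref{Dglec}, and the latter does not refer back to Proposition \ref{decaylmc}, so the two propositions can be applied in sequence under the same $\delta$ to conclude \eqref{Aprioric}.
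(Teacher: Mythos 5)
Your proof is correct and takes the same route as the paper, whose entire argument is to combine Propositions \ref{Dglec} and \ref{decaylmc} with the definition \eqref{G_defc} of $\mathcal{G}_{2N}$. One small clarification on your circularity discussion: the decay factor $(1+r)^{-N+2}$ invoked inside the proof of Proposition \ref{Dglec} comes directly from the standing bootstrap hypothesis $\mathcal{G}_{2N}(T)\le\delta$ (whose third summand already encodes $(1+r)^{2N-4}\se{N+2}(r)\le\delta$), not from Proposition \ref{decaylmc}, so Proposition \ref{Dglec} genuinely does not depend on Proposition \ref{decaylmc} and the two can be applied in sequence as you conclude.
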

\begin{proof}
The conclusion follows directly from the definition of $\mathcal{G}_{2N}$ and Propositions
\ref{Dglec}--\ref{decaylmc}.
\end{proof}

\section{Incompressible MHD system}\label{sec incom}

In this section, we will derive the a priori estimates for the smooth solution $(\eta,u,p)$ to the incompressible MHD system \eqref{reformulationic}. We assume throughout the section that the solution obeys the estimate $\mathcal{G}_{2N}(T) \le \delta$ for sufficiently small $\delta>0$.

\subsection{Energy evolution}

In this subsection we derive energy evolution estimates for temporal and horizontal spatial derivatives by using the energy-dissipation structure of the system.

\subsubsection{Energy evolution of time derivatives}

For the temporal derivatives, it is a key to use the following geometric formulation. As well explained by \cite{GT_per} in the study of the incompressible viscous surface wave problem, the reason is that if we attempted to use the linear perturbed formulation \eqref{perturb}, we would be unable to control the interaction between $\dt^np$ and $\diverge \dt^n u$. Applying $\dt^j$ for $j=0,\dots,n$ to the system \eqref{reformulationic}, we find that
\begin{equation}\label{linear_geometric}
\begin{cases}
  \dt (\dt^j \eta) =\dt^j u  & \text{in } \Omega
\\  \rho_0\dt (\dt^j u) -\mu \da (\dt^j u)+\naba (\dt^jp)- \kappa\bar b^2\p_{3}^2(\dt^j \eta) = F^{1,j}  & \text{in } \Omega \\
 \diva (\dt^j u) = F^{2,j} & \text{in } \Omega\\
\dt^j u  =0 & \text{on } \p\Omega,
\end{cases}
\end{equation}
where for $i=1,2,3,$
\begin{equation}\label{F_def_start}
\begin{split}
 F_i^{1,j}  = \sum_{0 < \ell \le j}  C_j^\ell\left\{\mu
\mathcal{A}_{lk} \p_k ( \dt^\ell  \mathcal{A}_{lm} \dt^{j-\ell}\p_m u_i)
   +  \mu \dt^\ell \mathcal{A}_{lk}\dt^{j-\ell}\p_k ( \mathcal{A}_{lm}\p_m u_i) -  \dt^\ell  \mathcal{A}_{ik} \dt^{j-\ell}\p_k p\right\}.
 \end{split}
\end{equation}
Since we can not hope to get any estimates of $p$ ($i.e.$ $\partial_t^j p$) without spatial derivatives, we have to pay more attention on the expression of $F^{2,j}$.  We will need some structural
conditions on $F^{2,j}$ which allow us to integrate by parts in the interaction between $\partial_t^j p$ and $F^{2,j}$. Indeed, note that $ \diva u=\p_i(\a_{mi}u_m)=0$. Then we have
\begin{equation}\label{Qstr}
F^{2,j}= {\rm div} Q^{2,j}\text{ with }Q_i^{2,j}=
 -\sum_{0<\ell\le j}C_j^\ell\partial_t^\ell \mathcal{A}_{mi} \partial_t^{j-\ell}u_m,\ i=1,2,3.
 \end{equation}
Since $u=0$ on $\p\Omega$, we have
\begin{equation}\label{Qstr1}
  Q^{2,j}=0\hbox{ on }\p\Omega.\end{equation}
These facts are important for handling the pressure term.

We record the estimates of these nonlinear terms $F^{1,j}$ and $ Q^{2,j}$ in the following lemma.
\begin{lem}\label{p_F_estimates}
For $n=N+2$ or $n=2N$, it holds that
\begin{equation}\label{p_F_e_01}
 \ns{F^{1,j} }_{0}+\ns{    Q^{2,j} }_{0}  + \ns{ \dt  Q^{2,n} }_{0}\ls \se{N+2} \sd{n}
\end{equation}
and
\begin{equation}\label{p_F_e_02}
\norm{ Q^{2,n} }_0^2   \ls \se{N+2}\se{n} .
\end{equation}
\end{lem}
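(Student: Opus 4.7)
The strategy mirrors Lemma~\ref{p_F_estimatesc} (and ultimately Lemma~3.2 of the compressible case): every term appearing in $F^{1,j}$ and $Q^{2,j}$ is quadratic in the perturbation because the sum is restricted to $0<\ell\le j$, so at least one factor is a time-derivative of $\mathcal{A}$, which in turn reduces (via $\mathcal{A}=((I+\nabla\eta)^{-1})^T$ and $\partial_t\eta=u$) to an expression polynomial in $\nabla\eta$ and its time-derivatives of $\nabla u$. I would first expand $\partial_t^\ell\mathcal{A}_{mi}$ using the Faa di Bruno / matrix product rule into a finite sum of monomials whose factors are of the form $\nabla\partial_t^{a_1}u$, $\dots$, $\nabla\partial_t^{a_s}u$ (and possibly $\nabla\eta$), with $\sum a_r=\ell-1$ and $s\ge 1$; this reduces both estimates to bounding products $X_1\cdots X_s\cdot Y$ where $Y$ is either $\partial_t^{j-\ell}\nabla u$, $\partial_t^{j-\ell}p$, or $\partial_t^{j-\ell}u$.

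Next, in each such monomial I would single out the factor $Z$ carrying the largest number of derivatives (with the ``parabolic weight'' $|\alpha|=2\alpha_0+\alpha_1+\cdots$) and put it in $L^2$; the remaining factors, each carrying strictly fewer derivatives, go in $L^\infty$ via the Sobolev embedding $H^2(\Omega)\hookrightarrow L^\infty(\Omega)$. Since $N\ge 4$ one always has enough room: any factor of parabolic weight $\le n-1$ together with two additional spatial derivatives fits inside $\se{N+2}$ (because $(n-1)+2\le 2(N+2)$ in both cases $n=N+2,2N$). Thus $\|X_r\|_{L^\infty}^2\ls \se{N+2}$, while the $L^2$ norm of $Z$ is absorbed into $\sd{n}$ (it carries either a derivative of $u$, a factor of $\nabla p$, or at least one time derivative of $u$, all of which are controlled by $\sd{n}$). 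This yields $\|F^{1,j}\|_0^2+\|Q^{2,j}\|_0^2\ls \se{N+2}\sd{n}$.

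For $\partial_t Q^{2,n}$ the Leibniz rule generates two sub-cases: either the extra time derivative lands on the low-derivative factor (which still satisfies the $H^2\hookrightarrow L^\infty$ estimate since its parabolic weight only grows by $2$ and stays $\le 2(N+2)$), or it lands on the high-derivative factor $Z$, raising its parabolic weight by $2$ from at most $2n-1$ to at most $2n+1$; in the latter case $Z$ becomes $\partial_t^{n-\ell+1}u$ or $\partial_t^{n-\ell}\nabla u$ at weight $\le 2n+1$, which is still controlled in $L^2$ by $\sd{n}$ (indeed $\sd{n}\supset \sum_{j=1}^n\|\partial_t^j u\|_{2n-2j+1}^2+\|u\|_{1,2n}^2$). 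Hence $\|\partial_t Q^{2,n}\|_0^2\ls \se{N+2}\sd{n}$ as well.

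Finally, the second estimate $\|Q^{2,n}\|_0^2\ls \se{N+2}\se{n}$ follows by exactly the same decomposition, except the high-derivative factor $Z$ is now only required to be controlled in $L^2$ by $\se{n}$ rather than $\sd{n}$; since $Z$ is $\partial_t^{n-\ell}u$ with parabolic weight $\le 2n$, the definition \eqref{p_energy_def} gives $\|Z\|_0^2\ls \se{n}$, and the low-derivative factors are still in $L^\infty$ via $\se{N+2}$. The only mild subtlety—which I expect to be the main bookkeeping obstacle—is to verify in each Leibniz monomial that the chosen splitting genuinely leaves the $L^\infty$ factors with parabolic weight at most $2(N+2)-2=2N+2$ and thus bounded through $H^2\hookrightarrow L^\infty$ by $\se{N+2}$; this is where the choice $N\ge 4$ is used.
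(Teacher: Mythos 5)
Your proposal is correct and follows essentially the same route as the paper's proof: the paper likewise observes that every term of $F^{1,j}$, $Q^{2,j}$, and $\dt Q^{2,n}$ is at least quadratic, writes each product as $XY$ with $X$ the lower-derivative factor, and bounds $\norm{X}_{L^\infty}^2\ls \se{N+2}$ and $\norm{Y}_0^2\ls\sd{n}$ (respectively $\se{n}$ for \eqref{p_F_e_02}). Your version merely makes the Leibniz/Fa\`a di Bruno expansion of $\dt^\ell\mathcal{A}$ and the parabolic-weight bookkeeping explicit; the only slip is the stated weight bound ``$\le n-1$'' for the $L^\infty$ factors (it can reach about $n+1$ for the $F^{1,j}$ terms), but the conclusion $n+3\le 2(N+2)$ still holds, so nothing breaks.
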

\begin{proof}
Note that all terms in the definitions of $F^{1,j}$ and $ Q^{2,j}$ (and so $\dt  Q^{2,j}$) are at least quadratic; each term can be written in the form $X Y$, where $X$ involves fewer derivative counts than $Y$. We may use the usual Sobolev embeddings along with the definitions of $\se{n}$ and $\sd{n} $ to estimate $\norm{X}_{L^\infty}^2\ls  \se{N+2} $ and $\norm{Y}_{0}^2\ls   \sd{n} $.
Then $\norm{XY}_0^2\le \norm{X}_{L^\infty}^2\norm{Y}_{0}^2\ls   \se{N+2} \sd{n} $, and the estimate \eqref{p_F_e_01} follows. The estimate \eqref{p_F_e_02} follows similarly.
\end{proof}

For a generic integer $n\ge 3$, we define the temporal energy by
\begin{equation}
 \bar{\mathcal{E}}^{t}_{n} =
 \sum_{j=0}^{n} \left( \norm{\sqrt{\rho_0}\dt^j u}_0^2+\kappa\bar b^2 \norm{\p_3\dt^j\eta}_0^2
 \right)
\end{equation}
and the temporal dissipation by
\begin{equation}
 \bar{\mathcal{D}}_n^{t} = \sum_{j=0}^{ n}  \norm{\dt^j u}_1^2.
\end{equation}
Then we have the following energy evolution.
\begin{prop}\label{i_temporal_evolution  N}
For $n=N+2$ or $n=2N$, it holds that
\begin{equation} \label{i_te_0}
  \frac{d}{dt}\left(\bar{\mathcal{E}}_{n}^{t}+\int_\Omega  \nabla \dt^{n-1} p   Q^{2,n}\right)
+ \bar{\mathcal{D}}_{n}^{t}
  \ls   \sqrt{\se{N+2} } \sd{ n}.
\end{equation}
\end{prop}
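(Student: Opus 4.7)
The plan is to apply $\dt^j$ to the geometric formulation \eqref{linear_geometric} for each $0\le j\le n$, take the $L^2$ inner product of the momentum equation with $\dt^j u$, and sum over $j$. As explained in the passage preceding \eqref{linear_geometric}, the geometric formulation is preferred over the linear perturbed one precisely because only here do we retain the cancellation $\int_\Omega \naba\dt^j p\cdot\dt^j u=-\int_\Omega\dt^j p\,\diva\dt^j u$ at the top temporal level.

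Term by term, the computation proceeds as in the compressible case of Proposition \ref{i_temporal_evolution  Nc}. The time derivative gives $\tfrac12\dtt\norm{\sqrt{\rho_0}\dt^j u}_0^2$. The viscosity term, integrated by parts using $u=0$ on $\p\Omega$ and the geometric identity $\p_j\a_{ij}=0$ (a consequence of $J\equiv 1$, guaranteed by \eqref{imp2}), yields $\mu\int_\Omega|\naba\dt^j u|^2$, which I then replace by $\mu\int_\Omega|\nabla\dt^j u|^2$ up to an error of size $\sqrt{\se{N+2}}\bar{\mathcal{D}}_n^t$, exactly as in \eqref{i_te_8c}. The magnetic term $-\kappa\bar b^2\int_\Omega\p_3^2\dt^j\eta\cdot\dt^j u$, using $\eta=0$ on $\p\Omega$ together with $\dt\eta=u$, produces $\tfrac{\kappa\bar b^2}{2}\dtt\norm{\p_3\dt^j\eta}_0^2$. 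Finally the pressure term, after integration by parts and the same $\p_j\a_{ij}=0$ identity, becomes $-\int_\Omega\dt^j p\cdot F^{2,j}=\int_\Omega\nabla\dt^j p\cdot Q^{2,j}$, where the last step integrates the structure $F^{2,j}=\diverge Q^{2,j}$ by parts using the crucial boundary condition $Q^{2,j}=0$ on $\p\Omega$ from \eqref{Qstr1}. The nonlinear forcing $\int_\Omega F^{1,j}\cdot\dt^j u$ is dominated by $\sqrt{\sd{n}}\sqrt{\se{N+2}\sd{n}}$ thanks to \eqref{p_F_e_01}.

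The hard part — and the motivation for the auxiliary correction $\int_\Omega\nabla\dt^{n-1}p\cdot Q^{2,n}$ in the modified energy — is the pressure term at the top level $j=n$. For $0\le j\le n-1$, the definition of $\sd{n}$ already controls $\ns{\nabla\dt^j p}_0$, so $|\int_\Omega\nabla\dt^j p\cdot Q^{2,j}|\ls\sqrt{\se{N+2}}\sd{n}$ follows at once from \eqref{p_F_e_01}. At $j=n$, however, $\nabla\dt^n p$ is not controlled by $\sd{n}$, so I instead integrate by parts in time:
\begin{equation*}
\int_\Omega \nabla\dt^n p\cdot Q^{2,n}=\dtt\int_\Omega\nabla\dt^{n-1}p\cdot Q^{2,n}-\int_\Omega\nabla\dt^{n-1}p\cdot\dt Q^{2,n}.
\end{equation*}
The first term is absorbed into the modified energy on the left; the remainder is bounded by $\sqrt{\sd{n}}\sqrt{\se{N+2}\sd{n}}=\sqrt{\se{N+2}}\sd{n}$ by combining $\ns{\nabla\dt^{n-1}p}_0\ls\sd{n}$ with the estimate $\ns{\dt Q^{2,n}}_0\ls\se{N+2}\sd{n}$ from \eqref{p_F_e_01}.

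Summing the $j=0,\dots,n$ identities, invoking Poincar\'e's inequality on $\norm{\dt^j u}_0$ once the full Dirichlet form $\mu\int_\Omega|\nabla\dt^j u|^2$ has been produced, and collecting all error terms on the right yields \eqref{i_te_0}. The only genuinely subtle point throughout is the pressure at the top time-derivative level, for which the extra boundary integral in the energy functional is tailored.
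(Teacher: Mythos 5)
Your proposal is correct and follows essentially the same route as the paper's proof: the geometric formulation, the cancellation producing $-\int_\Omega \dt^j p\, F^{2,j}$, the structure $F^{2,j}=\diverge Q^{2,j}$ with $Q^{2,j}=0$ on $\p\Omega$ to integrate by parts in space, the temporal integration by parts at $j=n$ that creates the corrector $\int_\Omega\nabla\dt^{n-1}p\, Q^{2,n}$, the replacement of $\naba$ by $\nabla$ up to $\sqrt{\se{N+2}}$-errors, and the bounds from Lemma \ref{p_F_estimates}. No gaps.
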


\begin{proof}
We let $n$ denote either $2N$ or $N+2$ throughout the proof. Taking the dot product of the second equation of $\eqref{linear_geometric}$ with $\dt^j u$, $j=0,\dots,n,$ and then integrating by parts, using the third and fourth equations, we obtain
\begin{align}\label{i_ge_ev_0} 
&\hal  \frac{d}{dt} \int_\Omega  \rho_0 \abs{\dt^j u}^2
+  \mu \int_\Omega \abs{ \naba\dt^j u}^2+\kappa\bar b^2\int_\Omega  \p_{3} \dt^j\eta\cdot\p_{3} \dt^j u\nonumber
\\&\quad= \int_\Omega   (  \dt^j u\cdot F^{1,j}+  \dt^j p  \diverge_\a (\dt^j u))
 = \int_\Omega   (   \dt^j u\cdot F^{1,j}+  \dt^j p  F^{2,j}). 
\end{align}
By the first equation, we get
\begin{equation}\label{i_te_2}
 \int_\Omega  \p_3 \dt^j\eta\cdot \p_3 \dt^j u =  \int_\Omega  \p_3 \dt^j\eta\cdot \p_3 \dt^{j+1} \eta=\hal\frac{d}{dt}\int_\Omega \abs{\p_3\dt^j\eta}^2.
\end{equation}

We now estimate the right hand side of \eqref{i_ge_ev_0}. For the $F^{1,j}$ term, by \eqref{p_F_e_01}, we may bound
\begin{equation}
\int_\Omega  \dt^j u\cdot F^{1,j} \le   \norm{\dt^j u}_{0}   \norm{F^{1,j}}_0 \ls  \sqrt{\sd{n} } \sqrt{\se{N+2} \sd{n}}
 .
\end{equation}
For the $F^{2,j}$ term, we need much more care. First, since we can not get any estimates of $\partial_t^j p$ without spatial derivatives, we need to use the structure of $F^{2,j}$ and employ an integration by parts in space. Indeed, by \eqref{Qstr} and \eqref{Qstr1}, we deduce
\begin{equation}
\int_\Omega  \dt^j p  F^{2,j} =\int_\Omega  \dt^j p  {\rm div} Q^{2,j}
=-\int_\Omega  \nabla \dt^j p    Q^{2,j}
\end{equation}
Second, we need to consider the case $j< n$ and $j= n$ separately. For $j< n$, by \eqref{p_F_e_01} we have
\begin{equation}\label{i_te_4}
-\int_\Omega  \nabla \dt^j p   Q^{2,j}   \le   \norm{\nabla \dt^j p}_{0}   \norm{ Q^{2,j} }_0 \ls  \sqrt{\sd{n}} \sqrt{\se{N+2} \sd{n}}.
\end{equation}
The case $j= n$ is much more involved since we can not control $\nabla\dt^{ n}p$. We are then forced to integrate by parts in time:
\begin{equation}
 -\int_\Omega   \nabla \dt^n p   Q^{2,n}= -\frac{d}{dt}\int_\Omega  \nabla \dt^{n-1} p   Q^{2,n}+\int_\Omega     \nabla \dt^{n-1} p \dt  Q^{2,n}   .
\end{equation}
By \eqref{p_F_e_01}, we may bound
\begin{equation}\label{i_te_5}
 \int_\Omega  \nabla \dt^{n-1} p \dt  Q^{2,n} \ls   \norm{\nabla \dt^{n-1} p }_{0} \norm{\dt    Q^{2,n}}_{0} \ls   \sqrt{\sd{n}} \sqrt{\se{N+2} \sd{n}} .
\end{equation}

Now we combine \eqref{i_te_2}--\eqref{i_te_5} to deduce from \eqref{i_ge_ev_0} that, summing over $j$,
\begin{equation} \label{iiii}
  \frac{d}{dt}\left(\bar{\mathcal{E}}_{n}^{t}+\int_\Omega  \nabla \dt^{n-1} p   Q^{2,n}\right)
+  \sum_{j=0}^{ n}  \int_\Omega \mu  \abs{ \naba \dt^j u}^2
  \ls   \sqrt{\se{N+2} } \sd{ n}.
\end{equation}
We then seek to replace $ \abs{ \nabla_{\mathcal{A}} \dt^{j} u}^2$ with $\abs{\nabla \dt^{j} u}^2$
in \eqref{iiii}.  To this end we write
\begin{equation}\label{i_te_8}
 \abs{ \naba \dt^{j} u}^2 = \abs{\nabla \dt^{j} u}^2 +   \left(\naba \dt^{j} u + \nabla\dt^{j} u\right): \left(\naba \dt^{j} u - \nabla \dt^{j} u\right)
\end{equation}
and note that
\begin{equation}\label{i_te_9}
 \int_\Omega \left(\naba \dt^{j} u + \nabla\dt^{j} u\right): \left(\naba \dt^{j} u - \nabla \dt^{j} u\right)
\ls  (1 + \sqrt{\se{N+2}})\sqrt{\se{N+2}} \int_\Omega \abs{\nab \dt^{j} u}^2  \ls  \sqrt{\se{N+2}} \sd{n}.
\end{equation}
We may then use \eqref{i_te_8}--\eqref{i_te_9} to replace in \eqref{iiii} and derive \eqref{i_te_0}, by Poincar\'e's inequality.
\end{proof}

\subsubsection{Energy evolution of horizontal derivatives}

For the horizontal spatial derivatives, we shall use the following linear perturbed formulation
\begin{equation}\label{perturb}
\begin{cases}
 \dt\eta=u&\hbox{in }\Omega
\\\rho_0\partial_t u-\mu\Delta u+\nabla p- \kappa\bar b^2 \p_3^2\eta=G^1\quad&\hbox{in }\Omega
\\ \diverge u=G^2&\hbox{in }\Omega
\\ u  =0 &\hbox{on }\p\Omega,
\end{cases}
\end{equation}
where for $i=1,2,3$,
\begin{equation}\label{G1_def}
 G^1_i=   \mu\p_k((\a_{jk}\a_{jl}-\delta_{jk}\delta_{jl})\p_l u_i)-(\a_{ik}-\delta_{ik})\p_kp
 \end{equation}
 and
\begin{equation}
G^2=(\a_{jk}-\delta_{jk})\p_k u_j.\label{G2}
 \end{equation}

We record the estimates of the nonlinear terms $G^1$ and $G^2$ in the following lemma.
\begin{lem}\label{p_G_estimates}
For $n=N+2$ or $n=2N$, it holds that
 \begin{equation}\label{p_G_e_0}
  \ns{ \bar{\nab}_0^{2n-2} G^1}_{0}  +  \ns{ \bar{\nab}_0^{2n-2}  G^2}_{1}     \ls  \se{N+2}\se{n} .
 \end{equation}
We also have
\begin{equation}\label{p_G_e_001}
\ns{ \bar{\nab}_0^{ 4N-1} G^1}_{0} +  \ns{ \bar{\nab}_0^{ 4N-1}  G^2}_{1}      \ls  \se{N+2}(\sd{2N}+\mathcal{J}_{2N}+ \f )
\end{equation}
and
\begin{equation}\label{p_G_e_002}
\ns{ \bar{\nab}_0^{ 2(N+2)-1} G^1}_{1} +  \ns{ \bar{\nab}_0^{ 2(N+2)-1}  G^2}_{2}  \ls \se{2N}\sd{N+2} .
\end{equation}
\end{lem}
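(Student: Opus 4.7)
The proof follows the same skeleton as Lemma~3.2 in the compressible case: every term in $G^1$ and $G^2$ is at least quadratic in $(\eta,u,p)$, and after applying $\partial^\alpha$ (with the parabolic counting $|\alpha|$) and expanding by the Leibniz rule, each summand takes the form $X\cdot Y$ where $X$ collects the derivatives falling on the $\a$-type factor (which is analytic in $\nabla\eta$, so that $X$ is effectively a spatial derivative of $\eta$) and $Y$ collects the derivatives falling on $\nabla u$ or $\nabla p$. I always arrange matters so that $X$ carries strictly fewer derivative counts than $Y$, and then combine the product inequality $\norm{XY}_0\le \norm{X}_{L^\infty}\norm{Y}_0$ with the Sobolev embedding $H^2\hookrightarrow L^\infty$. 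For the $H^1$-norm of $G^2$ appearing in \eqref{p_G_e_0}--\eqref{p_G_e_001} and the $H^2$-norm in \eqref{p_G_e_002}, I apply one, respectively two, additional spatial derivatives before performing the Leibniz split; the structural argument is otherwise identical.

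For \eqref{p_G_e_0}, the total derivative count on the product is at most $2n$, so $X$ carries at most roughly $n-1$ derivatives while $Y$ carries at most $n+1$. Since $n\ge 3$ and $N+2\ge 6$, the budget for $X$ stays well within $2(N+2)-2$ derivatives, yielding $\norm{X}_{L^\infty}^2\lesssim \se{N+2}$, while $Y$ sits in $\se{n}$; multiplying gives the claim. For \eqref{p_G_e_002}, the total count is $2(N+2)+1$, which is $\le 4N$ for $N\ge 4$, and here I flip the split: $X$ carries up to $2N+1$ derivatives and contributes $\norm{X}_{L^\infty}^2\lesssim \se{2N}$, while $Y$ lies in $\sd{N+2}$ since $\sd{N+2}$ controls $u$, $\nabla u$ and $\nabla p$ at the necessary order.

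The delicate case is \eqref{p_G_e_001}, where the total parabolic count on $\partial^\alpha G^i$ reaches $4N$, pushing $Y$ to up to $4N+1$ derivatives on $u$ or on $\eta$ in the worst split, which is one higher than what $\sd{2N}$ or $\se{2N}$ alone is able to bound. In the generic allocation, $X$ is low-order so that $\norm{X}_{L^\infty}^2\lesssim \se{N+2}$ and $Y$ fits inside $\sd{2N}$. In the borderline allocation, $Y$ is forced to the maximal derivative count: when the top derivative lands on $\eta$, I absorb $\norm{Y}_0^2$ into $\f=\ns{\eta}_{4N+1}$; when it lands on $u$ or $\nabla p$, I absorb it into $\mathcal{J}_{2N}=\ns{u}_{4N+1}+\ns{\nabla p}_{4N-1}$. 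This is exactly the role for which $\f$ and $\mathcal{J}_{2N}$ are introduced in $\mathcal{G}_{2N}$, and it explains the appearance of the $\se{N+2}(\sd{2N}+\mathcal{J}_{2N}+\f)$ bound rather than the naive $\se{N+2}\sd{2N}$.

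The main obstacle is the bookkeeping for this top-order case: the functionals $\se{N+2}$, $\se{2N}$, $\sd{2N}$, $\f$, $\mathcal{J}_{2N}$ control $\eta$, $u$, and $\nabla p$ at asymmetric spatial orders, so each possible derivative split has to be matched to the correct norm, and one must verify that the remaining cofactor always fits into $H^2$ so that the Sobolev embedding on $X$ is legitimate. Once the matching is fixed, each individual summand is controlled by a direct H\"older--Sobolev product inequality, and collecting all contributions yields the three stated bounds.
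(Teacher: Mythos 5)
Your overall strategy --- Leibniz expansion of $\partial^\alpha G^i$ into at-least-quadratic products $XY$, followed by H\"older with one factor in $L^\infty$ via Sobolev embedding and the other in $L^2$, with each Leibniz split matched to the functional that controls the relevant derivative order --- is exactly the paper's argument. Your treatment of \eqref{p_G_e_0}, and of the borderline allocations in \eqref{p_G_e_001} (top derivative on $\eta$ absorbed into $\f$, top derivative on $u$ or $\nabla p$ into $\mathcal{J}_{2N}$, low-order cofactor into $\se{N+2}$), coincides with the paper's proof.

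However, the explicit allocation you give for \eqref{p_G_e_002} is reversed and fails on the extremal Leibniz splits. You place the factor bounded by $\se{2N}$ in $L^\infty$ and the remaining factor in $L^2$ bounded by $\sd{N+2}$. But $\ns{\bar{\nab}_0^{2(N+2)-1}G^1}_1$ contains, from the term $\mu\p_k((\a_{jk}\a_{jl}-\delta_{jk}\delta_{jl})\p_lu_i)$, the split in which all $2N+5$ available derivatives fall on $\p_l u_i$, producing a term of the form $O(\nabla\eta)\,\nabla^{2N+6}u$; its $L^2$ factor requires control of $\ns{u}_{2N+6}$, whereas $\sd{N+2}$ only controls $u$ isotropically up to $H^{2N+4}$. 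The symmetric split $O(\nabla^{2N+6}\eta)\,\nabla u$ causes the same problem. The correct matching --- and the one the paper uses --- is the opposite of what you wrote: the factor carrying the larger derivative count goes in $L^2$ and is absorbed into $\se{2N}$ (which controls $u$ and $\eta$ up to order $4N\ge 2N+6$), while the low-order cofactor goes in $L^\infty$ and is absorbed into $\sd{N+2}$ (e.g.\ $\norm{\nabla\eta}_{L^\infty}^2\ls\ns{\eta}_{3}\ls\sd{N+2}$). Your closing remark that each split must be matched to the correct norm points in the right direction, but the only matching you actually specify for \eqref{p_G_e_002} is the one that does not close; with the roles of $\se{2N}$ and $\sd{N+2}$ exchanged (split by split, according to which factor is of higher order), the estimate goes through.
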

\begin{proof}
Note that all terms in the definitions of $G^i$ are at least quadratic. We apply these
space-time differential operators to $G^i$ and then expand using the Leibniz rule; each product in
the resulting sum is also at least quadratic. Then the estimate \eqref{p_G_e_0} follows in the same way as Lemma \ref{p_F_estimates}.

The last two terms in the right hand side of \eqref{p_G_e_001} is due to the control of the highest spatial derivatives in some products, which is not controlled by $\sd{2N}$ but $\mathcal{J}_{2N}+ \f$. It is crucial that the other factors in such products are of low derivatives and hence can be easily controlled by $\se{N+2}$. Then the estimate \eqref{p_G_e_001} follows.

The proof of the estimate \eqref{p_G_e_002} is somewhat easier. Indeed, we may write each term in the form $X Y$, where $X$ involves fewer derivative counts than $Y$; then we simply bound the various norms of $Y$ by $ \se{2N} $ and bound the various norms of $X$ by $\sd{N+2}$. Then the estimate \eqref{p_G_e_002} follows.
\end{proof}

For a generic integer $n\ge 3$, we define the  horizontal energy by
\begin{equation}
 \bar{\mathcal{E}}^{\ast}_{n} =   \norm{\sqrt{\rho_0 }\nabla_\ast  u }_{0,2n-1}^2  + \kappa\bar b^2\norm{\nabla_\ast\p_3  \eta }_{0,2n-1}^2
\end{equation}
and the horizontal dissipation by
\begin{equation}
 \bar{\mathcal{D}}_n^{\ast} = \norm{ \nabla_\ast  u }_{1,2n-1}^2 .
\end{equation}
Then we have the following energy evolution.

\begin{prop}\label{p_upper_evolution  N12}
It holds that
\begin{equation}\label{p_u_e_00}
\frac{d}{dt}\bar{\mathcal{E}}^{\ast}_{2N}+\bar{\mathcal{D}}_{2N}^{\ast}\ls\sqrt{ \se{N+2}  } ( \sd{2N}+\mathcal{J}_{2N} +\f)+\norm{\dt u}_{4N-2}^2
\end{equation}
and
\begin{equation}\label{p_u_e_00''}
\frac{d}{dt}\bar{\mathcal{E}}^\ast_{N+2}+\bar{\mathcal{D}}_{N+2}^\ast\ls\sqrt{ \se{2N}   } \sd{N+2}+\norm{\dt u}_{2(N+2)-2}^2  .
\end{equation}
\end{prop}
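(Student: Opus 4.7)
The plan is to mirror the energy argument of Proposition \ref{p_upper_evolution  N12c} for the compressible system, with the central new ingredient being the treatment of the pressure through the near-divergence constraint $\diverge u = G^2$ in \eqref{perturb}. Fix $n\in\{N+2,\, 2N\}$ and $\al\in\mathbb{N}^{1+2}$ with $\al_1+\al_2\ge 1$ and $|\al|\le 2n$ (parabolic counting); apply $\p^\al$ to the momentum equation of \eqref{perturb}, dot with $\p^\al u$, and integrate by parts in space using $u=0$ on $\p\Omega$. Invoking $\p_t\eta=u$ one rewrites
\begin{equation*}
-\kappa\bar b^2\int_\Omega \p^\al\p_3^2\eta\cdot\p^\al u = \kappa\bar b^2\int_\Omega \p^\al \p_3\eta\cdot\p_3 \p^\al u = \hal\kappa\bar b^2\dtt \|\p^\al \p_3\eta\|_0^2,
\end{equation*}
so that the basic identity reads
\begin{equation*}
\hal\dtt\!\left(\|\sqrt{\rho_0}\,\p^\al u\|_0^2 + \kappa\bar b^2\|\p^\al\p_3\eta\|_0^2\right) + \mu\|\nabla \p^\al u\|_0^2 = \int_\Omega \p^\al u\cdot \p^\al G^1 - \int_\Omega \nabla\p^\al p\cdot \p^\al u.
\end{equation*}
Summing over admissible $\al$ and using Poincar\'e's inequality recovers $\dtt\bar{\mathcal{E}}^{\ast}_n+\bar{\mathcal{D}}_n^{\ast}$ on the left.

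For the pressure integral I integrate by parts in space and use the constraint $\diverge \p^\al u = \p^\al G^2$ coming from \eqref{G2} to obtain $-\int_\Omega \nabla\p^\al p\cdot \p^\al u = \int_\Omega \p^\al p\,\p^\al G^2$. Because $\al_1+\al_2\ge 1$, one can write $\al=\gamma+(\al-\gamma)$ with $\gamma\in\mathbb{N}^2$ and $|\gamma|=1$, and a second IBP in $\gamma$ transfers one derivative to yield a term of the schematic form $\pm\int \p^{\al-\gamma} p\,\p^{\al+\gamma} G^2$ (or symmetrically $\pm\int \p^{\al+\gamma} p\,\p^{\al-\gamma} G^2$, depending on which factor is reduced). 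The $\p^\al G^1$ integral is treated analogously by a horizontal IBP to lower the derivative count on $u$. Cauchy--Schwarz together with Lemma \ref{p_G_estimates} then closes the estimate: for $n=2N$, \eqref{p_G_e_001} contributes $\sqrt{\se{N+2}}(\sd{2N}+\mathcal{J}_{2N}+\f)$; for $n=N+2$, \eqref{p_G_e_002} contributes $\sqrt{\se{2N}}\sd{N+2}$.

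The residual term $\|\p_t u\|_{4N-2}^2$ (respectively $\|\p_t u\|_{2(N+2)-2}^2$) arises in the borderline case where $\al$ carries its maximal temporal weight together with a horizontal component: pushing a derivative onto $G^2$ via the horizontal IBP would exceed the regularity provided by Lemma \ref{p_G_estimates}, so instead one integrates by parts in time on the pressure factor (in the spirit of the $Q^{2,n}$ trick of Proposition \ref{i_temporal_evolution  N}), producing a boundary-in-time piece that folds into the energy plus an explicit $\p_t u$ remainder of strictly lower spatial order. This remainder is not absorbable into $\bar{\mathcal{D}}_n^{\ast}$ and must be retained as a separate additive summand, as displayed in the statement.

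The principal technical obstacle is the top-order pressure control: at $|\al|=4N$ one is forced to bound a quantity of order $4N+1$ either on $p$ (equivalently $\nabla p$ of order $4N-1$) or on $G^2$, which lies beyond the reach of $\sd{2N}$ alone; this is precisely what necessitates introducing the auxiliary quantities $\mathcal{J}_{2N}$ and $\f$ on the right-hand side of \eqref{p_u_e_00}. These are not absorbable at the present step but become tolerable downstream via the time-weighted analysis of $\f$ sketched in the introduction, and their appearance with the small quadratic multiplier $\sqrt{\se{N+2}}$ is compatible with the two-tier closing scheme only after a careful Leibniz accounting --- exactly the one carried out in the derivation of \eqref{p_G_e_001} --- which guarantees that the low-regularity factor in each product in $G^1, G^2$ is bounded in $L^\infty$ by $\se{N+2}$ under the smallness hypothesis $\mathcal{G}_{2N}(T)\le\delta$.
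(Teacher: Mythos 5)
Your overall scaffolding (test the $\p^\al$-differentiated momentum equation against $\p^\al u$, convert the pressure integral into $\int\p^\al p\,\p^\al G^2$ via the constraint, and invoke Lemma \ref{p_G_estimates}) matches the paper, but two of your key steps are wrong, and one of them is precisely the step that produces the remainder term $\norm{\dt u}_{2n-2}^2$ in the statement.

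First, your ``basic identity'' silently commutes $\p^\al$ past the variable density: $\p^\al(\rho_0\dt u)\ne\rho_0\,\dt\p^\al u$, and the commutator
\begin{equation*}
-\sum_{0<\beta\le\al}C_\al^\beta\int_\Omega \p^{\beta}\rho_0\,\p^{\al-\beta}\dt u\cdot\p^\al u
\end{equation*}
is exactly where the term $\norm{\dt u}_{4N-2}^2$ (resp.\ $\norm{\dt u}_{2(N+2)-2}^2$) comes from: the paper estimates it, after suitable integrations by parts using $\nabla\rho_0\in H^{4N-1}$, by $\norm{\dt u}_{2n-2}\norm{\p^\al u}_1$. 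Your proposed alternative source for this term --- a temporal integration by parts on the pressure in a ``borderline case where $\al$ carries its maximal temporal weight'' --- does not apply: in the paper's proof the multi-indices $\al$ are purely horizontal spatial ($\al\in\mathbb{N}^2$, consistent with the definition of $\bar{\mathcal{E}}^\ast_n$ through the anisotropic norm $\norm{\cdot}_{0,2n-1}$), there is no temporal weight, and no analogue of the $Q^{2,n}$ trick is needed here. As written, your argument neither justifies the appearance of the $\norm{\dt u}^2$ remainder nor handles the nonconstant $\rho_0$ at all.

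Second, your treatment of the top-order pressure term for $n=2N$ is the one move the paper explicitly rules out: at $|\al|=4N$ a further integration by parts places $4N+1$ derivatives on either $p$ or $G^2$, and neither is controlled --- $\mathcal{J}_{2N}$ only bounds $\nabla p$ in $H^{4N-1}$ (so $p$ up to order $4N$), and \eqref{p_G_e_001} only bounds $G^2$ up to order $4N$. The correct estimate does \emph{not} integrate by parts: since $|\al|\ge1$ one writes $\p^\al p=\p^{\al-\gamma}(\p^\gamma p)$ with $|\gamma|=1$ horizontal, so that $\norm{\p^\al p}_0\le\norm{\nabla_\ast p}_{4N-1}\ls\sqrt{\mathcal{J}_{2N}}$ pairs with $\norm{\p^\al G^2}_0\le\norm{G^2}_{4N}$, both factors staying at order $\le 4N$. (Relatedly, for the $G^1$ integral the derivative must be moved \emph{onto} $u$, giving $\norm{\p^\al u}_1\norm{G^1}_{4N-1}$; ``lowering the derivative count on $u$'' goes the wrong way.) These are not cosmetic issues: they are the two places where the specific structure of the right-hand side of \eqref{p_u_e_00} is actually generated.
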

\begin{proof}
We let $n$ denote either $2N$ or $N+2$ throughout the proof. We take $\alpha\in \mathbb{N}^{2}$ so that $1\le |\al|\le 2n$. Applying $\p^\al $ to the second equation of \eqref{perturb} and then taking the dot product with $\p^\al u$, using the other equations as in Proposition \ref{i_temporal_evolution  N}, we find that
\begin{align} \label{p_u_e_1110} 
 &\hal \frac{d}{dt}\left(  \int_\Omega \rho_0 \abs{\p^\al   u }^2  +\kappa\bar b^2 \abs{\p_3\p^\al   \eta }^2\right)+  \mu\int_\Omega  \abs{\nabla \p^\al u }^2\nonumber
 \\&\quad=-\sum_{\beta<\alpha}C_\alpha^\beta\int_\Omega \p^{\al-\beta} \rho_0 \p^{\beta}\partial_t  u \cdot\p^\al   u+ \int_\Omega   \p^\al  u  \cdot  \p^\al  G^1 +\int_\Omega   \p^\al  p  \p^\al  G^2. 
\end{align}

We first deal with the summation term. If $\beta=0$, then the term in the sum is
\begin{equation}
-\int_\Omega \p^{\al} \rho_0 \partial_t  u \cdot\p^\al   u.
\end{equation}
For $\abs{\alpha}=1$, we bound
\begin{equation}
-\int_\Omega \p^{\al} \rho_0 \partial_t  u \cdot\p^\al   u
 \ls\norm{\nabla \rho_0}_{L^\infty}  \norm{ \dt u}_{L^2}\norm{\p^{\al}  u}_{L^2}
 \ls  \norm{ \dt u}_{0}\norm{ \p^\al   u}_{0} .
\end{equation}
For $2\le\abs{\alpha}\le 2n$, then we write $\al=\gamma+\alpha-\gamma$ for some $\gamma\in \mathbb{N}^2$ with $\abs{\gamma}=1$; we can then integrate by parts to have
\begin{align} 
&
-\int_\Omega \p^{\al} \rho_0 \partial_t  u \cdot\p^\al   u=\int_\Omega \p^{\al-\gamma} \rho_0\left( \p^{\gamma}\dt u\cdot\p^\al   u+  \dt u\cdot\p^{\al+\gamma}   u\right)\nonumber
\\&\quad
\ls\norm{ \p^{\al-\gamma} \rho_0}_{L^2} \left( \norm{ \p^{\gamma}\dt u}_{L^4}\norm{\p^\al   u}_{L^4}+   \norm{\dt u}_{L^\infty}\norm{\p^{\al+\gamma}   u}_{L^2}\right)
\ls \norm{ \dt u}_{2}\norm{\p^\al   u}_{1}. 
\end{align}
In conclusion, we have
\begin{equation}\label{111}
-\int_\Omega \p^{\al} \rho_0 \partial_t  u \cdot\p^\al  u
\ls \norm{ \dt u}_{2}\norm{\p^\al   u}_{1}.
\end{equation}
Now if $1\le \abs{\beta}\le2$, then we have
\begin{equation}
-\int_\Omega \p^{\al-\beta} \rho_0 \p^{\beta}\partial_t  u \cdot\p^\al   u
  \ls\norm{ \p^{\al-\beta}\rho_0}_{L^2}   \norm{ \p^{\beta}\dt u}_{L^4}\norm{\p^\al  u}_{L^4}
  \ls \norm{ \dt u}_{3}\norm{\p^\al   u}_{1}.
\end{equation}
If $3\le \abs{\beta}\le \abs{\alpha}-1\le 2n-1$, then we write $\beta=\gamma+\beta-\gamma$ for some $\gamma\in \mathbb{N}^2$ with $\abs{\gamma}=1$. We can then integrate by parts to have
\begin{align}\label{222} 
 &- \int_\Omega \p^{\al-\beta} \rho_0 \p^{\beta}\partial_t u \cdot\p^\al  u=\int_\Omega \p^{\al-\beta+\gamma} \rho_0 \p^{\beta-\gamma}\partial_t u \cdot\p^\al  u
  +\p^{\al-\beta} \rho_0 \p^{\beta-\gamma}\partial_t u \cdot\p^{\al+\gamma}  u\nonumber
 \\&\quad\ls \norm{ \p^{\beta-\gamma}\dt u}_{L^2}\left(\norm{ \p^{\al-\beta+\gamma}\rho_0}_{L^4}  \norm{\p^\al   u}_{L^4}+\norm{ \p^{\al-\beta}\rho_0}_{L^\infty}  \norm{\p^{\al+\gamma}   u}_{L^2}\right)\nonumber
 \\ &\quad\ls\norm{ \nabla \rho_0}_{2n-2}  \norm{ \dt u}_{0,2n-2}\norm{\p^\al   u}_{1}\ls  \norm{ \dt u}_{2n-2}\norm{\p^\al   u}_{1}. 
\end{align}
Hence, in light of \eqref{111}--\eqref{222}, we deduce from \eqref{p_u_e_1110} that
\begin{equation}\label{i_de_142}
-\sum_{\beta<\alpha}C_\alpha^\beta\int_\Omega \p^{\al-\beta} \rho_0 \p^{\beta}\partial_t  u \cdot\p^\al   u\ls \norm{ \dt u}_{2n-2}\norm{\p^\al   u}_{1}.
\end{equation}

We now estimate the remaining two terms on the right hand side of \eqref{p_u_e_1110}. Since $\abs{\al}\ge 1$, we may write $\alpha = \gamma +(\alpha-\gamma)$ for some $\gamma \in \mathbb{N}^2$ with $\abs{\gamma}=1$. We first consider the case $n=2N$. We can then integrate by parts and use \eqref{p_G_e_001} to have
\begin{align}\label{i_de_4} 
  \int_\Omega     \p^\al   u \cdot   \p^\al  G^1  &=- \int_\Omega     \p^{\alpha+\gamma}   u \cdot   \p^{\alpha-\gamma}  G^1
 \le \norm{\p^{\alpha+\gamma}   u}_{0}  \norm{ \p^{\alpha-\gamma}   G^1 }_{0}\nonumber \\&
\le \norm{\p^{\alpha}   u}_{1}  \norm{    G^1 }_{4N-1 }
\ls \sqrt{ \sd{2N} } \sqrt{ \se{N+2}(\sd{2N}+\mathcal{J}_{2N} +\f)  } .
\end{align}
For the $G^2$ term we do not need to (and we can not) integrate by parts:
\begin{equation}\label{i_de_5}
\begin{split}
  \int_\Omega  \p^\al   p \p^\al  G^2
&\le \norm{\p^{\alpha-\gamma}\p^\gamma  p}_{0}  \norm{ \p^{\alpha} G^2 }_{0}
\le \norm{\p^{\alpha-\gamma}\p^\gamma p}_{0}  \norm{    G^2 }_{4N}
  \\&\ls \sqrt{\mathcal{J}_{2N} }\sqrt{ \se{N+2}(\sd{2N}+\mathcal{J}_{2N} +\f)  } .
\end{split}
\end{equation}
Hence, by \eqref{i_de_142}--\eqref{i_de_5}, we deduce from \eqref{p_u_e_1110} that for all $1\le \abs{\alpha} \le 4N$,
\begin{align} \label{7878} 
 &\hal \frac{d}{dt}\left(  \int_\Omega \rho_0 \abs{\p^\al  u }^2  + \kappa\bar b^2\abs{\p_3\partial^\alpha  \eta }^2\right)+   \mu\int_\Omega  \abs{\nabla \p^\al u }^2\nonumber
 \\&\quad\ls  \sqrt{ \se{N+2}  } ( \sd{2N}+\mathcal{J}_{2N} +\f)+\norm{\dt u}_{4N-2}\norm{\p^\al   u}_{1}. 
\end{align}
The estimate \eqref{p_u_e_00} then follows from \eqref{7878} by summing over such $\alpha$ and using Poincar\'e's and Cauchy's inequalities.

We now consider the case $n=N+2$. By \eqref{p_G_e_002}, we have
\begin{equation} \label{i_de_4'}
\begin{split}
 & \int_\Omega   \p^\al u  \cdot  \p^\al G^1 \le \norm{\p^\al u}_0\norm{G^1}_{2(N+2)}
  \ls \sqrt{ \sd{N+2} } \sqrt{ \se{2N}\sd{N+2} } .
\end{split}
\end{equation}
For the $G^2$ term we need a bit more care. If $1\le|\al|\le 2(N+2) -1$, then
\begin{align}\label{i_de_5'0} 
  \int_\Omega  \p^\al   p \p^\al   G^2
&\le \norm{\p^{\alpha-\gamma}\p^\gamma   p}_{0}  \norm{ \p^{\alpha}  G^2 }_{0}
\le \norm{\nabla_\ast  p}_{2(N+2) -2}  \norm{    G^2 }_{ 2(N+2)}\nonumber
  \\&\ls \sqrt{ \sd{N+2} }\sqrt{ \se{2N}\sd{N+2}}  . 
\end{align}
If $|\al|= 2(N+2) $,  then we may write $\alpha = \beta +\gamma+(\alpha-\beta-\gamma)$ for some $\beta,\gamma \in \mathbb{N}^2$ with $\abs{\beta}=\abs{\gamma}=1$. Then we integrate by parts to have
\begin{align}\label{i_de_5'} 
  \int_\Omega  \p^\al p \p^\al  G^2&=-\int_\Omega  \p^{\al-\beta-\gamma}\p^\gamma  p \p^{\al+\beta}  G^2
\le \norm{\nabla_\ast  p}_{2(N+2) -2}  \norm{  G^2 }_{2(N+2)+1 }\nonumber
  \\&\ls \sqrt{ \sd{N+2} }\sqrt{ \se{2N}\sd{N+2}}  . 
\end{align}
Hence, by \eqref{i_de_142} and \eqref{i_de_4'}--\eqref{i_de_5'}, we deduce from \eqref{p_u_e_1110} that for all $1\le\abs{\alpha} \le 2(N+2)$,
\begin{equation} \label{7878'}
\hal \frac{d}{dt}\left(  \int_\Omega \rho_0 \abs{\p^\al   u }^2  + \kappa\bar b^2\abs{\p_3\partial^\alpha   \eta }^2\right)+  \mu \int_\Omega  \abs{\nabla \p^\al  u }^2
\ls  \sqrt{ \se{2N}   } \sd{N+2}+\norm{\dt u}_{2(N+2)-2}\norm{\p^\al   u}_{1}.
\end{equation}
The estimate \eqref{p_u_e_00''} then follows from \eqref{7878'}.
\end{proof}

\subsubsection{Energy evolution controlling $\eta$}

Note that the dissipation estimates in $\bar{\mathcal{D}}_n^t$ of Proposition \ref{i_temporal_evolution  N} and $\bar{\mathcal{D}}_n^\ast$ of Proposition \ref{p_upper_evolution  N12} only contain $u$, in this subsubsection we will recover certain dissipation estimates of $\eta$. Since $\dt \eta =u$, we then only need to estimate for $\eta$ without time derivatives. The estimates result from testing the linear perturbed formulation \eqref{perturb} by $\eta$. Note that the boundary condition $\eta=0$ on $\p\Omega$ guarantees this. Moreover, the Jacobian identity $J=1$ gives the control of ${\rm div}\eta$;
indeed,
\begin{equation}\label{phhicon}
{\rm div}\eta=\Phi,\quad \Phi=-({\rm det} (I+\nabla\eta)-1-{\rm div}\eta)=O(\nabla\eta\nabla^2\eta).
\end{equation}
Again, as for $F^{2,j}$, we will need some structural
conditions on $\Phi$ which allow us to integrate by parts in the interaction between $p$ and $\Phi$ (without spatial derivatives). This is not apparent, we need to do some lengthy but straightforward computations. Indeed, we expand the expression of $\Phi$ to conclude that
 \begin{equation}\label{Qstr2}
\Phi= {\rm div}\Psi \text{ with }
\Psi= \left(\begin{array}{ccc}\eta_1\p_2\eta_2+\eta_1\p_3\eta_3+\eta_1(\p_2\eta_2\p_3\eta_3-\p_3\eta_2\p_2\eta_3)
\\-\eta_1\p_1\eta_2+\eta_2\p_3\eta_3+\eta_1(\p_3\eta_2\p_1\eta_3-\p_1\eta_2\p_3\eta_3)
\\-\eta_1\p_1\eta_3-\eta_2\p_2\eta_3+\eta_1(\p_1\eta_2\p_2\eta_3-\p_2\eta_2\p_1\eta_3)
\end{array}\right).
 \end{equation}
Moreover, since $\eta=0$ on $\p\Omega$, we have
\begin{equation}\label{Qstr111}
 \Psi=0\hbox{ on }\p\Omega.
\end{equation}

We record some estimates of $\Phi$ and $\Psi$ in the following lemma.
\begin{lem}\label{p_G_estimates''}
It holds that
\begin{equation}\label{p_G_e_001''}
 \ns{ \Phi}_{4N} \ls  \se{N+2}(\sd{2N}+\mathcal{J}_{2N}+ \f ),
\end{equation}
\begin{equation}\label{p_G_e_002''}
 \ns{\Phi}_{2(N+2)+1} \ls \se{2N}\sd{N+2}
\end{equation}
and
\begin{equation}\label{Phe_0}
  \ns{ \Psi}_0 \ls \mathcal{E}_{3} \mathcal{D}_{3} .
\end{equation}
\end{lem}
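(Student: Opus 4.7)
The plan is to prove the three bounds by combining the explicit polynomial structure of $\Phi$ (quadratic plus cubic in $\nabla\eta$) and $\Psi$ (cubic plus quartic in $\eta,\nabla\eta$) with Leibniz expansion and interpolation between $\se{N+2}$, $\se{2N}$, $\sd{2N}$, $\sd{N+2}$, $\f$, and Sobolev embedding. Since the proof of \eqref{Phe_0} is the simplest, I will start with it. Using \eqref{Qstr2} one sees that each component of $\Psi$ is a sum of terms of the form $\eta_i \p_j\eta_k$ or $\eta_i(\p_j\eta_k)(\p_\ell\eta_m)$. For the quadratic terms, I put the $\eta_i$ factor in $L^\infty$ via Sobolev embedding $H^2\hookrightarrow L^\infty$ in three dimensions, bounded by $\norm{\eta}_2\ls\sqrt{\mathcal{E}_3}$, and the first-derivative factor in $L^2$, bounded by $\norm{\nabla\eta}_0\ls\sqrt{\mathcal{D}_3}$ (using the Poincaré inequality, available since $\eta=0$ on $\p\Omega$, together with $\norm{\eta}_{2\cdot 3}\ls \sqrt{\mathcal{D}_3}$). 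The cubic terms get one extra $L^\infty$ factor, which yields a further $\sqrt{\mathcal{E}_3}$ that is absorbed by the smallness assumption $\mathcal{G}_{2N}(T)\le\delta$.

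For the high-regularity estimates \eqref{p_G_e_001''} and \eqref{p_G_e_002''}, the strategy is the same as in Lemma \ref{p_G_estimates}: apply $\p^\alpha$ with $|\alpha|\le 4N$ (respectively $|\alpha|\le 2(N+2)+1$) to $\Phi$, expand each resulting product via Leibniz, and write every term schematically as $XY$ (or $XYZ$ for the cubic piece $\det(\nabla\eta)$) where $X$ carries the largest derivative count and $Y$, $Z$ the smaller ones. Then I put $X$ in $L^2$ and the other factors in $L^\infty$ via Sobolev embedding. For \eqref{p_G_e_001''}, when the derivatives all pile onto a single $\nabla\eta$ factor the resulting top norm $\norm{\nabla^{4N+1}\eta}_0$ is exactly what $\f$ controls, while intermediate distributions give $\nabla^{k}\eta$ with $k\le 4N$ controlled by $\sqrt{\sd{2N}}$; the remaining low-derivative factors are controlled in $L^\infty$ by $\sqrt{\se{N+2}}$ (using $H^2\hookrightarrow L^\infty$ applied to derivatives of order $\le N+2$, well within the range where $\se{N+2}$ gives $L^\infty$ control). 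The $\mathcal{J}_{2N}$ term in the bound does not actually appear from $\Phi$ (which involves only $\eta$) but is harmlessly included. For \eqref{p_G_e_002''}, one distributes the $2N+5$ derivatives so that one factor carries at most $4N-1$ derivatives (controlled by $\sqrt{\se{2N}}$ via $\norm{\eta}_{4N}$) and the remaining factors have at most $2N+2$ derivatives placed in $L^\infty$, which is absorbed by $\sqrt{\sd{N+2}}$ via $\norm{\eta}_{2(N+2)}$ and Sobolev embedding; for $N\ge 4$ both derivative budgets are respected.

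The main bookkeeping step, and therefore the main obstacle, is the bound \eqref{p_G_e_001''}: one must verify that in every Leibniz term arising from either the quadratic or the cubic piece of $\Phi$, the highest derivative factor can be matched with a norm inside $\sd{2N}+\f$, and simultaneously the complementary factors are of low enough order to be absorbed by $\se{N+2}$ via Sobolev. The extreme case where all $4N$ derivatives land on a single $\nabla\eta$ factor forces the introduction of $\f=\norm{\eta}_{4N+1}^2$ into the bound, while any other distribution stays within $\sd{2N}$; for the cubic part $\det(\nabla\eta)$ the pigeonhole principle ensures that at most one factor reaches order $4N+1$ while the other two have order at most $\lfloor 4N/2\rfloor+1$, which is well within the Sobolev regime controlled by $\sqrt{\se{N+2}}$. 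Once the cases are enumerated and estimated, each term yields the required product bound, and summing over $\alpha$ with $|\alpha|\le 4N$ completes the proof of \eqref{p_G_e_001''}. The estimate \eqref{p_G_e_002''} follows by the same enumeration with smaller derivative orders and the roles of $\se{2N}$ and $\sd{N+2}$ exchanged as described above.
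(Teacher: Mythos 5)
Your proposal is correct and follows exactly the route the paper takes: the paper's own proof of this lemma is a one-line deferral to the method of Lemma \ref{p_G_estimates}, namely Leibniz expansion of the (quadratic plus cubic in $\nabla\eta$) polynomial $\Phi$ and of $\Psi$, placing the highest-derivative factor in $L^2$ (matched to $\sd{2N}$, $\f$, $\se{2N}$, or $\mathcal{D}_3$ as appropriate) and the remaining factors in $L^\infty$ via Sobolev embedding (matched to $\se{N+2}$, $\sd{N+2}$, or $\mathcal{E}_3$), with extra cubic factors absorbed by smallness. Your case enumeration, including the extreme case forcing $\f$ into \eqref{p_G_e_001''} and the observation that $\mathcal{J}_{2N}$ is not actually needed, is consistent with the definitions of the functionals and supplies more detail than the paper does.
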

\begin{proof}
The proof proceeds similarly as Lemma \ref{p_G_estimates}.
\end{proof}

For a generic integer $n\ge 3$, we define the recovering energy by
\begin{equation}
 \bar{\mathcal{E}}^\sharp_{n} = \mu  \norm{\nabla     \eta}_{0,2n}^2
\end{equation}
and the corresponding dissipation by
\begin{equation}
 \bar{\mathcal{D}}_n^\sharp =\norm{  \p_3 \eta}_{0,2n}^2+\norm{      \eta}_{0,2n}^2.
\end{equation}
Then we have the following energy evolution.

\begin{prop}\label{p_upper_evolution  N'132}
It holds that
\begin{align}\label{p_u_e_00'132} 
&\frac{d}{dt}\left(\bar{\mathcal{E}}^\sharp_{2N}+2\sum_{\substack{\al\in \mathbb{N}^2\\  |\al|\le 4N}}\int_\Omega \partial^\alpha ( \rho_0   u )\cdot \partial^\alpha  \eta\right)+\bar{\mathcal{D}}_{2N}^\sharp\nonumber
\\&\quad\ls \sqrt{ \se{N+2}  }( \sd{2N} + \mathcal{J}_{2N} +\f)+\bar{\mathcal{D}}_{2N}^t+\bar{\mathcal{D}}_{2N}^{\ast}+\norm{  u}_{4N-1}^2 
\end{align}
and
\begin{align}\label{p_u_e_00'12132} 
&\frac{d}{dt}\left(\bar{\mathcal{E}}^\sharp_{N+2}+2\sum_{\substack{\al\in \mathbb{N}^2\\  |\al|\le 2(N+2)}}\int_\Omega \partial^\alpha ( \rho_0   u )\cdot \partial^\alpha  \eta\right)+\bar{\mathcal{D}}_{N+2}^\sharp\nonumber
\\&\quad\ls\sqrt{ \se{2N}   } \sd{N+2} +\bar{\mathcal{D}}_{N+2}^t +\bar{\mathcal{D}}_{N+2}^{\ast}+\norm{  u}_{2(N+2)-1}^2. 
\end{align}
\end{prop}
\begin{proof}
We let $n$ denote either $2N$ or $N+2$ throughout the proof.
Applying $\partial^\alpha$ with $\al\in \mathbb{N}^{2}$ so that $ |\al|\le 2n$ to the second equation of \eqref{perturb} and then taking the dot product with $\p^\al \eta$, since $\eta=0$ on $\p\Omega$, by \eqref{phhicon}, we find that
\begin{align} \label{p_u_e_111'} 
 &\int_\Omega \partial^\alpha ( \rho_0 \dt    u) \cdot \partial^\alpha \eta
  +\frac{1}{2}\frac{d}{dt} \int_\Omega  \mu    \abs{\nabla  \p^\al  \eta}^2  +\kappa\bar b^2\int_{\Omega}  \abs{\p_3\partial^\alpha  \eta }^2\nonumber
 \\&\quad=  \int_\Omega   \p^\al \eta  \cdot  \p^\al G^1 +\int_\Omega   \p^\al p  \diverge \p^\al \eta=  \int_\Omega   \p^\al \eta  \cdot  \p^\al G^1 +\int_\Omega   \p^\al p  \p^\al \Phi. 
\end{align}
For the first term on the left hand side of \eqref{p_u_e_111'}, we integrate by parts in time to have
\begin{align}\label{ttte} 
\int_\Omega  \partial^\alpha ( \rho_0 \dt    u) \cdot \partial^\alpha \eta&=\frac{d}{dt}\int_\Omega  \partial^\alpha ( \rho_0      u) \cdot   \partial^\alpha  \eta-\int_\Omega   \partial^\alpha ( \rho_0      u) \cdot \dt \partial^\alpha  \eta\nonumber\\&=\frac{d}{dt}\int_\Omega  \partial^\alpha ( \rho_0      u) \cdot   \partial^\alpha  \eta-\int_\Omega   \partial^\alpha ( \rho_0      u) \cdot   \partial^\alpha  u. 
\end{align}
We estimate the last term in \eqref{ttte}. If $0\le \abs{\alpha}\le1$,  it is easy to bound that
\begin{equation}\label{eeee}
 -\int_\Omega  \partial^\alpha ( \rho_0      u) \cdot   \partial^\alpha  u\ls \norm{ u}_{0,1}^2.
\end{equation}
If $2\le\abs{\alpha}\le 2n$, we may write $\alpha = \beta +(\alpha-\beta)$ for some $\beta \in \mathbb{N}^2$ with $\abs{\beta}=1$; we can then integrate by parts and expand to have
\begin{align} 
 -\int_\Omega   \partial^\alpha ( \rho_0      u) \cdot   \partial^\alpha  u
& =\int_\Omega   \partial^{\alpha-\beta} ( \rho_0      u) \cdot   \partial^{\alpha+\beta}  u\nonumber
\\& =\sum_{\gamma\le \alpha-\beta}\int_\Omega C_{\alpha-\beta}^\gamma    \p^\gamma \rho_0     \partial^{\alpha-\beta-\gamma} u  \cdot   \partial^{\alpha+\beta}  u. 
\end{align}
For $0\le \abs{\gamma}\le 1$, then
\begin{equation}
\int_\Omega    \p^\gamma \rho_0     \partial^{\alpha-\beta-\gamma} u  \cdot   \partial^{\alpha+\beta}  u
\le \norm{\p^\gamma\rho_0}_{L^\infty}\norm{ \partial^{\alpha-\beta-\gamma} u}_{L^2} \norm{\partial^{\alpha+\beta}  u}_{L^2}\ls \norm{u}_{2n-1}\norm{u}_{0,2n+1}.
\end{equation}
For $2\le \abs{\gamma}\le \abs{\alpha-\beta}\le 2n-1$, then
\begin{equation}\label{ffff}
\int_\Omega    \p^\gamma \rho_0     \partial^{\alpha-\beta-\gamma} u  \cdot   \partial^{\alpha+\beta}  u
\le \norm{\p^\gamma\rho_0}_{L^2}\norm{ \partial^{\alpha-\beta-\gamma} u}_{L^\infty} \norm{\partial^{\alpha+\beta}  u}_{L^2}\ls \norm{u}_{2n-1}\norm{u}_{0,2n+1}.
\end{equation}
Hence, in light of \eqref{eeee}--\eqref{ffff}, we deduce that
\begin{equation}
 -\int_\Omega   \partial^\alpha ( \rho_0      u) \cdot   \partial^\alpha  u\ls\norm{u}_{2n-1}\norm{u}_{0,2n+1}.
\end{equation}

We now estimate the terms on the right hand side of \eqref{p_u_e_111'}. For $\al=0$, we easily have
\begin{equation}
   \int_\Omega  \eta  \cdot   G^1
    \ls \sqrt{ \sd{3} } \sqrt{ \se{3}\sd{3}  } .
\end{equation}
The pressure term is needed much more care; by \eqref{Qstr2}, \eqref{Qstr111} and \eqref{Phe_0}, we obtain
\begin{equation}
\begin{split}
 & \int_\Omega     p   \Phi= \int_\Omega     p   {\rm div}\Psi= -\int_\Omega  \nabla  p   \Psi\le \norm{\nabla p}_0\norm{\Psi}_0\ls \sqrt{ \sd{3} } \sqrt{ \se{3}\sd{3}  } .
\end{split}
\end{equation}
We then turn to the case $\al\neq 0$. We first consider the case $n=2N$. Similarly as \eqref{i_de_4},  we have
\begin{equation}
\int_\Omega   \p^\al \eta \cdot  \p^\al G^1    \ls  \norm{\p^{\alpha}  \eta}_{1}  \norm{   G^1 }_{4N-1}
 \ls \sqrt{ \f } \sqrt{ \se{N+2}(\sd{2N}+\mathcal{J}_{2N} +\f)  }.
\end{equation}
Similarly as \eqref{i_de_5}, by using instead \eqref{p_G_e_001''},
\begin{equation}
  \int_\Omega  \p^\al p \p^\al \Phi
\le \norm{\nabla_\ast p}_{4N-1}  \norm{    \Phi }_{4N}
 \ls \sqrt{ \mathcal{J}_{2N} }\sqrt{ \se{N+2}(\sd{2N}+\mathcal{J}_{2N} +\f)  } .
\end{equation}

We now consider the case $n=N+2$. By \eqref{p_G_e_002}, we have
\begin{equation}
\int_\Omega   \p^\al \eta \cdot  \p^\al G^1    \ls  \norm{\p^{\alpha}  \eta}_{0}  \norm{   G^1 }_{2(N+2)}
 \ls \sqrt{ \sd{N+2}} \sqrt{ \se{2N}\sd{N+2} }.
\end{equation}
For the pressure term, similarly as \eqref{i_de_5'0}--\eqref{i_de_5'}, by using instead \eqref{p_G_e_002''}, we obtain
\begin{equation}
  \int_\Omega  \p^\al p \p^\al G^2
\ls \norm{ \nabla_\ast p}_{2(N+2)-2}  \norm{\Phi }_{2(N+2)+1}
 \ls \sqrt{ \sd{N+2} }\sqrt{ \se{2N}\sd{N+2}}  .
\end{equation}

Consequently, the estimates \eqref{p_u_e_00'132} and \eqref{p_u_e_00'12132} follow by collecting the estimates, summing over such $\alpha$ and using Poincar\'e's and Cauchy's inequalities.
\end{proof}

\subsection{Estimates via Stokes regularity}

We now apply the elliptic regularity theory of certain Stokes problems to improve the energy-dissipation estimate with the energy evolution in hand.

\subsubsection{Dissipation improvement}

We first consider the improvement of the dissipation estimates; the energy estimates of $\eta$ will be improved along the way.
\begin{prop}\label{p_upper_evolution  N'}
For $n\ge 3$, there exists an energy $\mathfrak{E}_n $ which is equivalent to $\norm{\eta}_{2n}^2$ such that
\begin{equation}\label{d2n}
\frac{d}{dt}\mathfrak{E}_{2N}+\mathcal{D}_{2N}\ls  { \se{N+2}  }(\sd{2N} +  \mathcal{J}_{2N} +\f)+\bar{\mathcal{D}}_{2N}^t+\bar{\mathcal{D}}_{2N}^\ast+\bar{\mathcal{D}}_{2N}^\sharp
\end{equation}
and
\begin{equation}\label{d2n+2}
\frac{d}{dt}\mathfrak{E}_{N+2} +\mathcal{D}_{N+2}\ls { \se{2N}   } \sd{N+2}+\bar{\mathcal{D}}_{N+2}^t+\bar{\mathcal{D}}_{N+2}^\ast+\bar{\mathcal{D}}_{N+2}^\sharp.
\end{equation}
\end{prop}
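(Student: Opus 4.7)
Following the strategy sketched in the introduction, my plan is to realize the dissipation upgrade through Stokes elliptic regularity applied to the auxiliary unknown $w := u + (\kappa\bar b^2/\mu)\eta$. Writing $\partial_3^2\eta = \Delta\eta - \Delta_\ast\eta$ in the linear perturbed formulation \eqref{perturb} and using that both $u$ and $\eta$ vanish on $\partial\Omega$, the pair $(w,p)$ solves the Stokes problem
\begin{equation*}
\begin{cases}
-\mu\Delta w+\nabla p = -\rho_0\partial_t u-\kappa\bar b^2\Delta_\ast\eta+G^1 & \text{in }\Omega,\\
\diverge w = G^2+(\kappa\bar b^2/\mu)\Phi & \text{in }\Omega,\\
w=0 & \text{on }\partial\Omega.
\end{cases}
\end{equation*}
The point of this rewrite is that the right-hand side contains only a time derivative of $u$, a horizontal Laplacian of $\eta$ (already controlled by $\bar{\mathcal{D}}_n^\sharp$), and the nonlinearities $G^1,G^2,\Phi$ whose norms are supplied by Lemma \ref{p_G_estimates} and Lemma \ref{p_G_estimates''}.

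First I would invoke classical Stokes elliptic regularity at level $r=2n-2$ to bound $\norm{w}_{2n}^2+\norm{\nabla p}_{2n-2}^2$; since $u = w-(\kappa\bar b^2/\mu)\eta$ this immediately yields a bound on $\norm{u}_{2n}^2$ modulo a full Sobolev control of $\eta$. The latter I would recover by the vertical decomposition
\begin{equation*}
\norm{\eta}_{2n}^2\lesssim \norm{\eta}_{0,2n}^2+\norm{\partial_3\eta}_{0,2n-1}^2+\norm{\partial_3^2\eta}_{2n-2}^2,
\end{equation*}
in which the first two terms sit inside $\bar{\mathcal{D}}_n^\sharp$ and the last is read off the momentum equation via $\kappa\bar b^2\partial_3^2\eta=\rho_0\partial_t u-\mu\Delta u+\nabla p-G^1$, so that $\norm{\partial_3^2\eta}_{2n-2}^2$ reduces to quantities already controlled either by $\bar{\mathcal{D}}_n^t$, by the Stokes estimate itself, or by the nonlinear lemmas.

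For the time-derivative content of $\mathcal{D}_n$ I would apply $\partial_t^j$ to the Stokes problem for $(w,p)$ and invoke elliptic regularity at level $r=2n-2j-1$ for $j=1,\dots,n-1$ (the top case $j=n$ is absorbed into $\bar{\mathcal{D}}_n^t$). Since $\partial_t^j w = \partial_t^j u+(\kappa\bar b^2/\mu)\partial_t^{j-1}u$ for $j\ge 1$, each step produces
\begin{equation*}
\norm{\partial_t^j u}_{2n-2j+1}^2+\norm{\nabla\partial_t^j p}_{2n-2j-1}^2\lesssim\norm{\partial_t^{j+1}u}_{2n-2j-1}^2+\norm{\partial_t^{j-1}u}_{2n-2j+1}^2+\text{nonlinear},
\end{equation*}
and a forward induction on $j$, combined with Sobolev interpolation and Young's inequality to soak up lower-order intermediates, produces the remaining time-derivative dissipation. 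The energy $\mathfrak{E}_n$ would be built as $\norm{\eta}_{2n}^2$ plus a small $\epsilon$-weighted correction analogous to the $\int\partial^\alpha(\rho_0 u)\cdot\partial^\alpha\eta$ coupling used in Proposition \ref{p_upper_evolution  N'132}; smallness of $\epsilon$ preserves equivalence with $\norm{\eta}_{2n}^2$, while $\tfrac{d}{dt}\mathfrak{E}_n$ only generates lower-order remainders that fit inside $\bar{\mathcal{D}}_n^t+\bar{\mathcal{D}}_n^\ast+\bar{\mathcal{D}}_n^\sharp$.

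The bookkeeping of the nonlinear pieces distinguishes the two cases: for $n=2N$ the refined estimates \eqref{p_G_e_001} and \eqref{p_G_e_001''} funnel the top-derivative terms into $\sqrt{\se{N+2}}(\sd{2N}+\mathcal{J}_{2N}+\f)$, while for $n=N+2$ the bounds \eqref{p_G_e_002} and \eqref{p_G_e_002''} give $\sqrt{\se{2N}}\sd{N+2}$. I expect the main obstacle to be the circular appearance of $\norm{\eta}_{2n}^2$: because $c=\kappa\bar b^2/\mu$ need not be small, the naive bound $\norm{u}_{2n}^2\lesssim\norm{w}_{2n}^2+c^2\norm{\eta}_{2n}^2$ does \emph{not} absorb the $c^2\norm{\eta}_{2n}^2$ by smallness. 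It is precisely the $\partial_3^2$-splitting above—converting the unwanted $\norm{\eta}_{2n}^2$ into horizontal pieces of $\bar{\mathcal{D}}_n^\sharp$ together with a $\norm{\partial_3^2\eta}_{2n-2}^2$ that the momentum equation trades for quantities already in the Stokes bound—that defeats this obstacle and closes the estimate.
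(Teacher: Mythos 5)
You have the right auxiliary unknown $w=u+(\kappa\bar b^2/\mu)\eta$ and the right Stokes problem, but the way you propose to extract $\norm{u}_{2n}^2+\norm{\eta}_{2n}^2+\norm{\nabla p}_{2n-2}^2$ from it does not close, for two reasons. First, invoking Stokes regularity at the full level $r=2n$ requires the forcing $\Delta_\ast\eta$ in $H^{2n-2}$, and $\norm{\Delta_\ast\eta}_{2n-2}^2$ contains mixed derivatives such as $\p_3^{2n-2}\Delta_\ast\eta$ that are controlled neither by $\bar{\mathcal{D}}_n^\sharp$ (which involves only horizontal derivatives of $\eta$ and of $\p_3\eta$) nor by anything else available at that stage, so even the starting bound on $\norm{w}_{2n}^2$ is out of reach. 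Second, and more seriously, your proposed escape from the circularity --- reading $\kappa\bar b^2\p_3^2\eta=\rho_0\dt u-\mu\Delta u+\nabla p-G^1$ off the momentum equation --- reintroduces $\norm{u}_{2n}^2$ and $\norm{\nabla p}_{2n-2}^2$, which you can only reach through $\norm{w}_{2n}^2\ls\norm{\eta}_{2n}^2+\cdots$ and $\norm{u}_{2n}^2\ls\norm{w}_{2n}^2+c^2\norm{\eta}_{2n}^2$. The resulting chain is $\norm{\eta}_{2n}^2\le C\norm{\eta}_{2n}^2+\cdots$ with a constant $C$ that is not small (as you note yourself, $c=\kappa\bar b^2/\mu$ is not small, and the elliptic constants are not $1$), so nothing is absorbed and the argument is genuinely circular.

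The paper breaks the circle with two devices that are absent from your proposal. (i) It never applies full elliptic regularity of order $2n$ at once: it applies only horizontal derivatives $\p^\al$, $\al\in\mathbb{N}^2$, $|\al|\le 2n-2j-2$, to the $w$-problem and uses elliptic regularity of order $2j+2$, so that the forcing is measured by $\norm{\Delta_\ast\eta}_{2j,2n-2j-2}^2\le\norm{\eta}_{2j,2n-2j}^2$, i.e. two fewer vertical derivatives than the output $\norm{\eta}_{2j+2,2n-2j-2}^2$; this is exactly the quantity produced at step $j-1$, with base case $\norm{\eta}_{0,2n}^2\ls\bar{\mathcal{D}}_n^\sharp$, and the recursion in $j$ trades horizontal for vertical regularity two orders at a time. (ii) The cross term in $\norm{w}_{2j+2,2n-2j-2}^2=\norm{u}_{2j+2,2n-2j-2}^2+c^2\norm{\eta}_{2j+2,2n-2j-2}^2+2c\,(u,\eta)_{2j+2,2n-2j-2}$ is an exact time derivative because $u=\dt\eta$, namely a constant multiple of $\frac{d}{dt}\norm{\eta}_{2j+2,2n-2j-2}^2$. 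Hence the dangerous $c^2\norm{\eta}_{2j+2,2n-2j-2}^2$ sits on the good side of the inequality as part of the dissipation, the time derivative defines $\mathfrak{E}_n=\sum_{j}\lambda_j\norm{\eta}_{2j+2,2n-2j-2}^2$ (no cross-term correction of the kind you suggest is needed, and $\mathfrak{E}_n$ is manifestly equivalent to $\norm{\eta}_{2n}^2$), and nothing has to be absorbed. Without (i) and (ii) --- in particular without the observation that $u=\dt\eta$ converts the cross term into $\frac{d}{dt}$ of the energy --- I do not see how your version of the argument can be completed. The time-derivative part of your proposal (induction on $j$ for $\dt^j u$, $\dt^j p$) is essentially the paper's and is fine, modulo the slip $r=2n-2j-1$ versus the $r=2n-2j+1$ implicit in your displayed inequality.
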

\begin{proof}
We let $n$ denote either $2N$ or $N+2$ throughout the proof, and we compactly write
\begin{equation}\label{n1}
 \mathcal{Y}_n =  \ns{ \bar{\nab}^{2n-1} G^1}_{0}  +  \ns{ \bar{\nab}^{2n-1}  G^2}_{1} +  \ns{   \Phi}_{2n-1}  .
\end{equation}

We divide the proof into several steps.

{\bf Control terms with time derivatives}

Applying the time derivatives $\dt^{j},\ j=1,\dots,n-1$ to the equations \eqref{perturb}, we find that
\begin{equation}\label{stokesp1}
\begin{cases}
-\mu\Delta \dt^{j} u+\nabla \dt^{j} p =-\rho_0\dt^{j+1} u+\kappa\bar b^2 \p_{3}^2 \dt^{j}\eta+\dt^{j} G^1& \text{in }
\Omega
\\ \diverge \dt^{j} u=\dt^{j} G^2  & \text{in }
\Omega
\\ \dt^{j} u=0 &\text{on }\p\Omega.
\end{cases}
\end{equation}
Applying the elliptic estimates \eqref{stokes es} of Lemma \ref{i_linear_elliptic2} with $r=2n-2j+1\ge 3$ to the problem \eqref{stokesp1} and using the notation \eqref{n1}, we obtain
\begin{align}\label{ffes1} 
 &\norm{\dt^j  u  }_{2n-2j+1}^2 + \norm{\nab \dt^j  p  }_{2n-2j-1}^2\nonumber
 \\  &\quad\ls
\norm{\rho_0\dt^{j+1} u   }_{2n-2j-1 }^2+\norm{ \p_{3}^2 \dt^{j }\eta}_{2n-2j-1 }^2+ \norm{ \dt^j G^1   }_{2n-2j-1}^2
+ \norm{\dt^j  G^2  }_{2n-2j}^2+ \norm{\dt^j  u  }_{0}^2\nonumber
\\&\quad\ls   \norm{\dt^{j+1} u   }_{2n-2(j+1)+1 }^2 + \norm{\dt^{j }\eta  }_{2n-2 j +1 }^2+\bar{\mathcal{D}}_n^t+  \y_n. 
\end{align}
A simple induction on \eqref{ffes1} yields, since $\dt \eta=u$
\begin{align}\label{eses11} 
  \sum_{j=1}^{n}\norm{ \dt^j u  }_{2n-2j+1}^2+  \sum_{j=1}^{n-1}\norm{\nab \dt^j  p  }_{2n-2j-1}^2
 & \ls \ns{\dt^{n} u}_{1}+ \sum_{j=1}^{n-1}\norm{\dt^{j }\eta  }_{2n-2 j +1 }^2 +\bar{\mathcal{D}}_n^t+\y_n\nonumber
\\&   \ls   \sum_{j=1}^{n-1}\norm{\dt^{j-1} u   }_{2n-2(j-1)-1 }^2 + \bar{\mathcal{D}}_n^t+\y_n\nonumber
\\&  = \sum_{j=0}^{n-2}\norm{\dt^{j } u   }_{2n-2j-1 }^2   +\bar{\mathcal{D}}_n^t+ \y_n. 
\end{align}

{\bf Control terms without time derivatives}

Note that we can not use the Stokes problem \eqref{stokesp1} with $j=0$ as above since we have not controlled $\p_3^2\eta$ yet. But notice that we have certain control of the horizontal derivatives of $\eta$ in $\bar{\mathcal{D}}_n^\sharp$. This motivates us to introduce the quantity $w=  u+  \kappa\bar b^2/\mu \eta$ and we find that
\begin{equation}\label{stokesp2}
\begin{cases}
-\mu\Delta w+\nabla p=\kappa\bar b^2 \Delta_\ast\eta-\rho_0\partial_t u+G^1& \text{in }
\Omega
\\ \displaystyle\diverge w=  G^2+ \frac{ \kappa\bar b^2}{\mu}  \Phi& \text{in }
\Omega
\\  w=0 &\text{on }\p\Omega.
\end{cases}
\end{equation}
Fix $j=0,1,\dots,n-1$. Applying $\p^\al$ with $\al\in \mathbb{N}^2$ so that $|\al|\le 2n-2j-2$ to the problem \eqref{stokesp2} and then using the elliptic estimates of Lemma \ref{i_linear_elliptic2} with $2j+2\ge 2$ to obtain, summing over such $\al$,
\begin{align}\label{hihoh} 
&\norm{w}_{2j+2,2n-2j-2}^2+\norm{\nabla p}_{2j,2n-2j-2}^2\nonumber
\\&\quad\ls \norm{\Delta_\ast\eta}_{2j,2n-2j-2}^2+\norm{\p_t u}_{2j,2n-2j-2}^2+\norm{G^1}_{2j,2n-2j-2}^2+\norm{G^2}_{2j+1,2n-2j-2}^2\nonumber
\\&\qquad+\norm{\Phi}_{2j+1,2n-2j-2}^2+\ns{w}_{0,2n-2j} \nonumber
\\&\quad\ls \norm{ \eta}_{2j,2n-2j}^2+\norm{\p_t u}_{2n-2}^2+ \bar{\mathcal{D}}_n^t+\bar{\mathcal{D}}_n^\ast+\bar{\mathcal{D}}_n^\sharp  +\y_n. 
\end{align}
It is a key to note that
\begin{align} 
&\norm{w}_{2j+2,2n-2j-2}^2 = \norm{     u+     \frac{ \kappa\bar b^2}{\mu} \eta  }_{2j+2,2n-2j-2}^2\nonumber
\\&\quad= \norm{      u  }_{2j+2,2n-2j-2}^2+ \frac{ \kappa^2\bar b^4}{\mu^2}\norm{       \eta  }_{2j+2,2n-2j-2}^2+ \frac{ \kappa\bar b^2}{2\mu} \frac{d}{dt}\norm{     \eta  }_{2j+2,2n-2j-2}^2.
\end{align}
Therefore, we deduce that for $j=0,\dots, n-1$,
\begin{align} 
& \frac{d}{dt}\norm{     \eta  }_{2j+2,2n-2j-2}^2+\norm{      u  }_{2j+2,2n-2j-2}^2+\norm{        \eta  }_{2j+2,2n-2j-2}^2+\norm{\nabla p}_{2j,2n-2j-2}^2\nonumber
\\&\quad\ls \norm{ \eta}_{2j,2n-2j}^2+\norm{\p_t u}_{2n-2}^2  + \bar{\mathcal{D}}_n^t+\bar{\mathcal{D}}_n^\ast+\bar{\mathcal{D}}_n^\sharp +\y_n. 
\end{align}
By this recursive inequality on $j$, we conclude that there exist constants $\lambda_j>0,\ j=0,\dots, n-1$ such that
\begin{align}\label{q1345} 
& \frac{d}{dt}\sum_{j=0}^{n-1}\lambda_j\norm{     \eta  }_{2j+2,2n-2j-2}^2+\sum_{j=0}^{n-1}\left(\norm{      u  }_{2j+2,2n-2j-2}^2+\norm{        \eta  }_{2j+2,2n-2j-2}^2+\norm{\nabla p}_{2j,2n-2j-2}^2\right)\nonumber
\\&\quad\ls \norm{ \eta}_{0,2n}^2+\norm{\p_t u}_{2n-2}^2  + \bar{\mathcal{D}}_n^t+\bar{\mathcal{D}}_n^\ast+\bar{\mathcal{D}}_n^\sharp  +\y_n\nonumber
\\&\quad\ls  \norm{\p_t u}_{2n-2}^2  + \bar{\mathcal{D}}_n^t+\bar{\mathcal{D}}_n^\ast+\bar{\mathcal{D}}_n^\sharp  +\y_n. 
\end{align}
Hence if we define
\begin{equation}
\mathfrak{E}_n:=\sum_{j=0}^{n-1}\lambda_j\norm{     \eta  }_{2j+2,2n-2j-2}^2,
\end{equation}
then $\mathfrak{E}_n$ is equivalent to $\norm{\eta}_{2n}^2$. \eqref{q1345} implies in particular that
\begin{align} \label{eses22} 
& \frac{d}{dt}\mathfrak{E}_n
+ \norm{    u  }_{2n}^2+\norm{    \eta  }_{2n}^2+\norm{\nabla p}_{2n-2}^2 \ls  \norm{\p_t u}_{2n-2}^2 + \bar{\mathcal{D}}_n^t+\bar{\mathcal{D}}_n^\ast+\bar{\mathcal{D}}_n^\sharp   +\y_n. 
\end{align}

{\bf Combined estimates}

We may combine the estimates \eqref{eses11} and \eqref{eses22} to get
\begin{align}\label{ffes2} 
&
\frac{d}{dt}\mathfrak{E}_n
+   \norm{    u  }_{2n}^2+\norm{    \eta  }_{2n}^2+\norm{\nabla p}_{2n-2}^2
+ \sum_{j=1}^{n}\norm{ \dt^j u  }_{2n-2j+1}^2+  \sum_{j=1}^{n-1}\norm{\nab \dt^j  p  }_{2n-2j-1}^2\nonumber
  \\&\quad\ls  \sum_{j=0}^{n-2}\norm{\dt^{j } u   }_{2n-2j-1 }^2 +\norm{\p_t u}_{2n-2}^2 + \bar{\mathcal{D}}_n^t+\bar{\mathcal{D}}_n^\ast+\bar{\mathcal{D}}_n^\sharp +\y_n. 
\end{align}
Using the Sobolev interpolation and Young's inequality, we can improve \eqref{ffes2} to be
\begin{align}\label{claim2} 
&
\frac{d}{dt}\mathfrak{E}_n
+   \norm{    u  }_{2n}^2+\norm{    \eta  }_{2n}^2+\norm{\nabla p}_{2n-2}^2
 + \sum_{j=1}^{n}\norm{ \dt^j u  }_{2n-2j+1}^2+  \sum_{j=1}^{n-1}\norm{\nab \dt^j  p  }_{2n-2j-1}^2\nonumber
 \\&\quad \ls  \sum_{j=0}^{n-2}\norm{\dt^{j } u   }_{0}^2 + \bar{\mathcal{D}}_n^t+\bar{\mathcal{D}}_n^\ast+\bar{\mathcal{D}}_n^\sharp +\y_n\ls  \bar{\mathcal{D}}_n^t+\bar{\mathcal{D}}_n^\ast+\bar{\mathcal{D}}_n^\sharp   +\y_n. 
\end{align}
Adding $\bar{\mathcal{D}}_n^\ast$ to both sides of \eqref{claim2} implies that
\begin{equation}\label{dth_7}
\frac{d}{dt}\mathfrak{E}_n
+  \mathcal{D}_n   \ls  \bar{\mathcal{D}}_n^t+\bar{\mathcal{D}}_n^\ast+\bar{\mathcal{D}}_n^\sharp  +\y_n.
\end{equation}
Using \eqref{p_G_e_001} and \eqref{p_G_e_001''} to estimate
$\mathcal{Y}_{2N}\lesssim  { \se{N+2}  }(\sd{2N} +  \mathcal{J}_{2N} +\f)$, we obtain \eqref{d2n} from \eqref{dth_7} with $n=2N$; using   \eqref{p_G_e_002} and \eqref{p_G_e_002''}   to estimate $\mathcal{Y}_{N+2}\lesssim   {\mathcal{E}}_{2N}  {\mathcal{D}}_{N+2} $, we obtain \eqref{d2n+2} from \eqref{dth_7} with $n=N+2$.
\end{proof}

\subsubsection{Energy improvement}

Now we improve the energy estimates.

\begin{prop}\label{e2nic}
For $n=2N$ or $N+2$, it holds that
\begin{equation}\label{e2n}
{\mathcal{E}}_{n}  \lesssim
\bar{\mathcal{E}}_n^t   + \bar{\mathcal{E}}_n^\sharp   +\mathfrak{E}_{n}+ (\mathcal{E}_{n} )^{2}.
\end{equation}
\end{prop}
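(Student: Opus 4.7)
The plan is to mimic the strategy used in the compressible analog (Proposition \ref{e2ncprop}), but now employ the Stokes regularity theory rather than the Lam\'e regularity, since the incompressible system produces a new unknown pressure $p$ that must be estimated simultaneously with $u$. The target terms in $\mathcal{E}_n$ are the $u$-norms $\sum_{j=0}^n \ns{\dt^j u}_{2n-2j}$, the $p$-norms $\sum_{j=0}^{n-1}\ns{\nabla\dt^j p}_{2n-2j-2}$, and the $\eta$-norms $\ns{\eta}_{1,2n}+\ns{\eta}_{2n}$.

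First I would dispose of the $\eta$ parts. The equivalence between $\mathfrak{E}_n$ and $\ns{\eta}_{2n}$ (from Proposition \ref{p_upper_evolution  N'}) handles $\ns{\eta}_{2n}$ directly. For $\ns{\eta}_{1,2n}=\sum_{|\alpha_\ast|\le 2n}\ns{\p^{\alpha_\ast}\eta}_1$, since $\eta=0$ on $\p\Omega$, Poincar\'e's inequality gives
$\ns{\p^{\alpha_\ast}\eta}_1 \ls \ns{\nabla\p^{\alpha_\ast}\eta}_0$, so that $\ns{\eta}_{1,2n}\ls \ns{\nabla\eta}_{0,2n}^2\ls \bar{\mathcal{E}}_n^\sharp$.

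Next I would apply the Stokes system arising from differentiating \eqref{perturb} by $\dt^j$ for $j=0,\ldots,n-1$, which reads
\begin{equation}\nonumber
\begin{cases}
-\mu\Delta\dt^j u+\nabla\dt^j p=-\rho_0\dt^{j+1}u+\kappa\bar b^2\p_3^2\dt^j\eta+\dt^j G^1 & \text{in }\Omega,\\
\diverge\dt^j u=\dt^j G^2 & \text{in }\Omega,\\
\dt^j u=0 & \text{on }\p\Omega.
\end{cases}
\end{equation}
Applying the Stokes regularity estimate (Lemma \ref{i_linear_elliptic2}) with $r=2n-2j\ge 2$, setting $\mathcal{X}_n:=\ns{\bar\nab_0^{2n-2}G^1}_0+\ns{\bar\nab_0^{2n-2}G^2}_1$ (bounded by $(\se{n})^2$ via \eqref{p_G_e_0}), and using $\ns{\p_3^2\dt^j\eta}_{2n-2j-2}\ls \ns{\dt^j\eta}_{2n-2j}$, I obtain
\begin{equation}\nonumber
\ns{\dt^j u}_{2n-2j}+\ns{\nabla\dt^j p}_{2n-2j-2}\ls \ns{\dt^{j+1}u}_{2n-2(j+1)}+\ns{\dt^j\eta}_{2n-2j}+\mathcal{X}_n.
\end{equation}
For $j=0$ the $\ns{\eta}_{2n}$ term is absorbed by $\mathfrak{E}_n$, while for $j\ge 1$ the relation $\dt\eta=u$ converts $\ns{\dt^j\eta}_{2n-2j}=\ns{\dt^{j-1}u}_{2n-2j}$, a term of one order lower in time but the same spatial count.

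Then I would iterate this inequality from $j=0$ through $j=n-1$, reindex the $\eta$-contributions, and sum together with $\ns{\nabla\dt^jp}_{2n-2j-2}$ to arrive at
\begin{equation}\nonumber
\sum_{j=0}^n\ns{\dt^j u}_{2n-2j}+\sum_{j=0}^{n-1}\ns{\nabla\dt^j p}_{2n-2j-2}\ls \ns{\dt^n u}_0+\bar{\mathcal{E}}_n^\sharp+\mathfrak{E}_n+\sum_{j=0}^{n-2}\ns{\dt^j u}_{2n-2j-2}+\mathcal{X}_n.
\end{equation}
The last sum is a lower-order leftover of the induction; I would bound it by Sobolev interpolation $\ns{\dt^j u}_{2n-2j-2}\le \epsilon\ns{\dt^j u}_{2n-2j}+C_\epsilon\ns{\dt^j u}_0$, absorb the first piece on the left for small $\epsilon$, and use $\ns{\dt^j u}_0\ls \bar{\mathcal{E}}_n^t$. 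Combined with $\ns{\dt^n u}_0\ls \bar{\mathcal{E}}_n^t$ and the $\eta$-estimates above, and using $\mathcal{X}_n\ls(\se{n})^2$ from \eqref{p_G_e_0}, I obtain precisely \eqref{e2n}.

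The main subtlety (not really an obstacle, but worth noting) is the $j=0$ case: in the dissipation estimate of Proposition \ref{p_upper_evolution  N'} we could not use Stokes on $(u,p)$ directly because $\p_3^2\eta$ was not yet controlled, which forced the change of variables $w=u+\kappa\bar b^2\mu^{-1}\eta$; here, however, the full $\ns{\eta}_{2n}$ is already part of the target $\mathcal{E}_n$ (via $\mathfrak{E}_n$), so it is legitimate to absorb $\ns{\p_3^2\eta}_{2n-2}$ onto the right-hand side and apply Stokes directly to $(u,p)$ with $j=0$. Apart from this, the estimate is a routine Stokes–iteration–interpolation argument parallel to the compressible case.
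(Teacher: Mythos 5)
Your proposal is correct and follows essentially the same route as the paper: apply the Stokes regularity of Lemma \ref{i_linear_elliptic2} to the time-differentiated system for $j=0,\dots,n-1$ (with the $j=0$ term $\ns{\p_3^2\eta}_{2n-2}$ absorbed via $\mathfrak{E}_n$, exactly as you note), induct using $\dt\eta=u$, remove the lower-order leftovers by Sobolev interpolation, and bound $\mathcal{X}_n$ by $(\se{n})^2$ via \eqref{p_G_e_0}. Your explicit Poincar\'e argument for $\ns{\eta}_{1,2n}\ls\bar{\mathcal{E}}_n^\sharp$ and your remark contrasting this with the $w=u+\kappa\bar b^2\mu^{-1}\eta$ device needed in the dissipation estimate are both accurate and consistent with what the paper does implicitly.
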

\begin{proof}
We let $n$ denote either $2N$ or $N+2$ throughout the proof, and we compactly write
\begin{equation}\label{n112}
 \mathcal{X}_n =   \ns{ \bar{\nab}^{2n-2}_0  G^1}_{0} +\ns{ \bar{\nab}^{2n-2}_0  G^2}_{1}  .
\end{equation}

For $j=0,\dots,n-1$, applying the elliptic estimates \eqref{stokes es} of Lemma \ref{i_linear_elliptic2} with $r=2n-2j \ge 2$ to the problem \eqref{stokesp1}, we obtain
\begin{align}\label{ffes3} 
 &\norm{\dt^j  u  }_{2n-2j}^2 + \norm{\nabla \dt^j  p  }_{2n-2j-2}^2\nonumber
 \\  &\quad\ls
\norm{\rho_0\dt^{j+1} u   }_{2n-2j-2 }^2+\norm{ \p_{3}^2 \dt^{j} \eta}_{2n-2j-2 }^2+ \norm{ \dt^j G^1   }_{2n-2j-2}^2
+ \norm{\dt^j  G^2  }_{2n-2j-1}^2+ \norm{\dt^j  u  }_{0}^2\nonumber
  \\&\quad\ls   \norm{\dt^{j+1} u   }_{2n-2(j+1) }^2 +\norm{   \dt^{j} \eta}_{2n-2j  }^2  +\bar{\mathcal{E}}_n^t +  \x_n.
\end{align}
A simple induction on \eqref{ffes3} yields, since $\dt\eta=u$,
\begin{align}\label{ffes4} 
\sum_{j=0}^{n}\norm{ \dt^j u  }_{2n-2j }^2+ \norm{\nabla \dt^j  p  }_{2n-2j-2}^2\nonumber
 &  \ls \ns{\dt^{n} u}_{0} +\sum_{j=0}^{n-1}\norm{ \dt^j \eta  }_{2n-2j }^2 +\bar{\mathcal{E}}_n^t +  \x_n
\\&   \le \bar{\mathcal{E}}_n^t +\norm{  \eta  }_{2n  }^2 +\sum_{j=1}^{n-1}\norm{ \dt^{j-1} u  }_{2n-2j }^2   + \x_n\nonumber
\\&  \ls  \sum_{j=0}^{n-2}\norm{ \dt^{j} u  }_{2n-2j-2}^2     +\bar{\mathcal{E}}_n^t  +\mathfrak{E}_n +  \x_n. 
\end{align}
Using the Sobolev interpolation and Young's inequality, we can improve \eqref{ffes4} to be
\begin{align}\label{claim2'} 
&  \sum_{j=0}^{n}\norm{ \dt^j u  }_{2n-2j }^2+   \norm{\nabla \dt^j  p  }_{2n-2j-2}^2\nonumber
\\&\quad \ls  \sum_{j=0}^{n-2}\norm{ \dt^{j} u  }_{0}^2   + \bar{\mathcal{E}}_n^t +\mathfrak{E}_n +  \x_n
\ls   \bar{\mathcal{E}}_n^t  +\mathfrak{E}_n +  \x_n. 
\end{align}
Adding $\bar{\mathcal{E}}_n^\sharp$ to both sides of \eqref{claim2'} implies that
\begin{equation}\label{claim12}
{\mathcal{E}}_{n}  \lesssim
 \bar{\mathcal{E}}_{n}^t   +\bar{\mathcal{E}}_n^\sharp+ \mathfrak{E}_{n} +  \x_n.
\end{equation}
Using \eqref{p_G_e_0} to bound $\mathcal{X}_{n} \lesssim  (\mathcal{E}_{n} )^{2}$, we then conclude \eqref{e2n}.
\end{proof}

\subsubsection{Synthesis}

We now chain all the estimates derived previously to conclude the following.
\begin{prop}
For $n=N+2$ or $2N$, there exists an  energy $\tilde{ \mathcal{E}}_{n}$ which is equivalent to $\mathcal{E}_{n}$ such that
\begin{equation}\label{sys2n}
\frac{d}{dt}\tilde{ \mathcal{E}}_{2N}+ {\mathcal{D}}_{2N}  \ls     \sqrt{ \se{N+2}  }(  \mathcal{J}_{2N} +\f)
\end{equation}
and
\begin{equation}\label{sysn+2}
\frac{d}{dt}\tilde{ \mathcal{E}}_{N+2}+ {\mathcal{D}}_{N+2}  \le 0.
\end{equation}
\end{prop}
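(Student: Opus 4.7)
The plan is to mirror the synthesis already carried out in the compressible case, combining the four main pieces of information obtained for each $n\in\{N+2,2N\}$: the temporal evolution \eqref{i_te_0}, the horizontal evolution (\eqref{p_u_e_00} or \eqref{p_u_e_00''}), the recovery for $\eta$ (\eqref{p_u_e_00'132} or \eqref{p_u_e_00'12132}), and the Stokes dissipation improvement (\eqref{d2n} or \eqref{d2n+2}). Writing compactly
\[
\mathcal{Z}_{2N} := \sqrt{\se{N+2}}(\sd{2N}+\mathcal{J}_{2N}+\f), \qquad \mathcal{Z}_{N+2} := \sqrt{\se{2N}}\,\sd{N+2},
\]
the first step is to form, for fixed small $\epsilon>0$, the weighted combination
\[
\mathcal{A}_n := \bar{\mathcal{E}}_n^t + \bar{\mathcal{E}}_n^\ast + \int_\Omega \nabla\dt^{n-1}p\cdot Q^{2,n} + \epsilon\Bigl(\bar{\mathcal{E}}_n^\sharp + 2\!\!\sum_{\substack{\alpha\in\mathbb{N}^2\\|\alpha|\le 2n}}\!\!\int_\Omega \partial^\alpha(\rho_0 u)\cdot\partial^\alpha\eta\Bigr) + \epsilon^2\mathfrak{E}_n,
\]
so that adding the four inequalities with weights $1,1,\epsilon,\epsilon^2$ yields
\[
\frac{d}{dt}\mathcal{A}_n + \bar{\mathcal{D}}_n^t + \bar{\mathcal{D}}_n^\ast + \epsilon\bar{\mathcal{D}}_n^\sharp + \epsilon^2\sd{n}\ls \mathcal{Z}_n + \norm{\dt u}_{2n-2}^2 + \norm{u}_{2n-1}^2 + \epsilon(\bar{\mathcal{D}}_n^t+\bar{\mathcal{D}}_n^\ast) + \epsilon^2\bar{\mathcal{D}}_n^\sharp.
\]

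For $\epsilon$ sufficiently small the $\epsilon$-weighted dissipations on the right are absorbed into their counterparts on the left. The stray low-order terms $\norm{\dt u}_{2n-2}^2$ and $\norm{u}_{2n-1}^2$, arising from the rough control of the nonhomogeneous density factors in Propositions \ref{p_upper_evolution  N12} and \ref{p_upper_evolution  N'132}, sit strictly below the top slots of $\sd{n}$, so Sobolev interpolation together with Young's inequality absorbs them into a fraction of $\epsilon^2\sd{n}$ at the cost of $\norm{\dt u}_0^2+\norm{u}_0^2$, which in turn are controlled by $\bar{\mathcal{D}}_n^t$ via Poincar\'e's inequality. This produces
\[
\frac{d}{dt}\mathcal{A}_n + \sd{n} \ls \mathcal{Z}_n.
\]

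I would then set $\tilde{\mathcal{E}}_n := \mathcal{A}_n$ and verify the equivalence $\tilde{\mathcal{E}}_n\simeq \se{n}$. By \eqref{p_F_e_02} and Cauchy--Schwarz the pressure correction is bounded by
\[
\Bigl|\int_\Omega \nabla\dt^{n-1}p\cdot Q^{2,n}\Bigr|\ls \norm{\nabla\dt^{n-1}p}_0\norm{Q^{2,n}}_0 \ls \sqrt{\se{N+2}}\,\se{n}\le \sqrt{\delta}\,\se{n},
\]
while the $\sum\int \partial^\alpha(\rho_0 u)\cdot\partial^\alpha\eta$ piece is trivially $\ls \se{n}$ by Cauchy--Schwarz. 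Combining with the energy-improvement estimate \eqref{e2n} in the form $\se{n}\ls \bar{\mathcal{E}}_n^t + \bar{\mathcal{E}}_n^\sharp + \mathfrak{E}_n + (\se{n})^2$ and the smallness of $\delta$ yields, after possibly shrinking $\epsilon$ once more, the two-sided bound $\tilde{\mathcal{E}}_n\simeq \se{n}$. Finally, \eqref{sys2n} follows by absorbing the piece $\sqrt{\se{N+2}}\,\sd{2N}\le C\sqrt{\delta}\,\sd{2N}$ of $\mathcal{Z}_{2N}$ into the LHS, and \eqref{sysn+2} follows since the whole of $\mathcal{Z}_{N+2}\le C\sqrt{\delta}\,\sd{N+2}$ can be absorbed.

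The main obstacle, beyond the small-parameter bookkeeping already rehearsed in the compressible synthesis \eqref{i_te_02nc}--\eqref{i_te_02n1c}, is the two-sided handling of the pressure-correction term $\int \nabla\dt^{n-1}p\cdot Q^{2,n}$: it is forced on us by the geometric reformulation that underlies Proposition \ref{i_temporal_evolution  N} (needed to evade the intractable coupling between $\dt^n p$ and $\diverge \dt^n u$), and it must be strictly dominated by the positive pieces of $\mathcal{A}_n$ so that equivalence with $\se{n}$ is preserved, which makes the smallness of $\se{N+2}$ essential to the very definition of the modified energy.
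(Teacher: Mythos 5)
Your overall architecture is the paper's: combine the temporal, horizontal, $\eta$-recovery and Stokes-improvement inequalities with a hierarchy of weights, absorb the weaker dissipations, and then establish equivalence of the resulting energy with $\se{n}$ via \eqref{e2n} and the bound \eqref{p_F_e_02} on the pressure correction. The equivalence argument and the final absorption of $\sqrt{\se{N+2}}\,\sd{2N}$ are exactly as in the paper.

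However, there is a genuine gap in the absorption of the stray terms $\norm{\dt u}_{2n-2}^2$ and $\norm{u}_{2n-1}^2$. In your combination $\mathcal{A}_n$ the temporal block carries weight $1$, so after absorbing the $\epsilon$-weighted dissipations the left-hand side controls only $c_0\bar{\mathcal{D}}_n^t$ with a fixed universal $c_0$, while the full dissipation $\sd{n}$ (the only place where $\norm{\dt u}_{2n-1}^2$ and $\norm{u}_{2n}^2$ live) appears with the small weight $\epsilon^2$. Interpolating $\norm{\dt u}_{2n-2}^2\le C_\lambda\norm{\dt u}_0^2+\lambda\norm{\dt u}_{2n-1}^2$ therefore forces $\lambda\lesssim\epsilon^2$, and the cost $C_\lambda$ then blows up like a negative power of $\epsilon$; the resulting term $C_\lambda\norm{\dt u}_0^2\lesssim C_\lambda\bar{\mathcal{D}}_n^t$ cannot be absorbed by $c_0\bar{\mathcal{D}}_n^t$. (This is precisely the structural difference from the compressible synthesis you invoke: there the interpolation is performed inside \eqref{fes5}--\eqref{fes6}, where the absorbing top-order dissipation and the term being interpolated sit in the \emph{same} inequality with $O(1)$ coefficients, so no large weight is needed.) The paper's remedy is to multiply the temporal energy, the pressure correction $\int_\Omega\nabla\dt^{n-1}p\,Q^{2,n}$, and hence $\bar{\mathcal{D}}_n^t$ by a large constant $K=1/\epsilon^4$, chosen after the interpolation \eqref{epsionl} so that $K\bar{\mathcal{D}}_n^t$ dominates the interpolation cost while $\epsilon^3\mathcal{D}_n$ is still absorbed by $\epsilon^2\mathcal{D}_n$. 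Your argument closes once you insert this large weight $K$ on the temporal block (and check, as you essentially already did, that the $K$-weighted pressure correction $K\int_\Omega\nabla\dt^{n-1}p\,Q^{2,n}\lesssim K\sqrt{\delta}\,\se{n}$ is still dominated after shrinking $\delta$ relative to $\epsilon$, so that the equivalence $\tilde{\mathcal{E}}_n\simeq\se{n}$ survives).
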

\begin{proof}
We let $n$ denote either $2N$ or $N+2$ through the proof, and we use the compact notation
\begin{equation}
\mathcal{Z}_n \text{ with }\mathcal{Z}_{2N}:=\sqrt{ \se{N+2}  }(\sd{ 2N}+  \mathcal{J}_{2N} +\f)\text{ and }\mathcal{Z}_{N+2}:=\sqrt{ \se{2N}  }\sd{ N+2} .
\end{equation}
We then deduce from Propositions \ref{i_temporal_evolution  N}, \ref{p_upper_evolution  N12}, \ref{p_upper_evolution  N'132} and \ref{p_upper_evolution  N'} that for $K\gg1$ and $0<\epsilon\ll 1$,
\begin{align} \label{i_te_02n} 
 & \frac{d}{dt}\left(K\bar{\mathcal{E}}_{n}^t+K\int_\Omega  \nabla \dt^{n-1} p   Q^{2,n}+ \bar{\mathcal{E}}_{n}^*+\epsilon\left( \bar{\mathcal{E}}_{n}^\sharp+2\sum_{\substack{\al\in \mathbb{N}^2\\  |\al|\le 2n}}\int_\Omega \partial^\alpha ( \rho_0   u )\cdot \partial^\alpha  \eta\right)+\epsilon^2 \mathfrak{E}_{n}\right)\nonumber
 \\&\quad + K\bar{\mathcal{D}}_{n}^t+\bar{\mathcal{D}}_{n}^*+\epsilon \bar{\mathcal{D}}_{n}^\sharp+\epsilon^2  {\mathcal{D}}_{n}\nonumber
 \\&\quad \ls   K \mathcal{Z}_n+\norm{\dt u}_{2n-2}^2+\epsilon(\bar{\mathcal{D}}_{n}^t+\bar{\mathcal{D}}_{n}^{\ast}+\norm{  u}_{2n-1}^2)+\epsilon^2(\bar{\mathcal{D}}_{n}^t+\bar{\mathcal{D}}_{n}^*+ \bar{\mathcal{D}}_{n}^\sharp). 
\end{align}
Taking $\epsilon>0$ sufficiently small, we obtain
\begin{align} \label{i_te_02n1} 
 & \frac{d}{dt}\left(K\bar{\mathcal{E}}_{n}^t+K\int_\Omega  \nabla \dt^{n-1} p   Q^{2,n}+ \bar{\mathcal{E}}_{n}^*+\epsilon\left( \bar{\mathcal{E}}_{n}^\sharp+2\sum_{\substack{\al\in \mathbb{N}^2\\  |\al|\le 2n}}\int_\Omega \partial^\alpha ( \rho_0   u )\cdot \partial^\alpha  \eta\right)+\epsilon^2 \mathfrak{E}_{n}\right)\nonumber
 \\&\quad + K\bar{\mathcal{D}}_{n}^t+\bar{\mathcal{D}}_{n}^*+\epsilon \bar{\mathcal{D}}_{n}^\sharp+\epsilon^2  {\mathcal{D}}_{n}\nonumber\\&\quad \ls    K \mathcal{Z}_n+\norm{\dt u}_{2n-2}^2+ \norm{  u}_{2n-1}^2 .
\end{align}
By the Sobolev interpolation and Young's inequality, we have
\begin{equation} \label{epsionl}
\norm{\dt u}_{4N-2}^2 +\norm{  u}_{4N-1}^2 \ls \frac{1}{\epsilon^3}(\norm{\dt u}_{0}^2 +\norm{  u}_{0}^2)+{\epsilon^3}(\norm{\dt u}_{4N-1}^2 +\norm{  u}_{4N}^2)
 \ls \frac{1}{\epsilon^3}\bar{\mathcal{D}}_{n}^t +{\epsilon^3}{\mathcal{D}}_{n}.
\end{equation}
Plugging \eqref{epsionl} into \eqref{i_te_02n1} and taking $K=1/\epsilon^4$, we deduce that for sufficiently small $\epsilon>0$,
\begin{align} \label{i_te_02n2} 
 & \frac{d}{dt}\left(\frac{1}{\epsilon^4}\bar{\mathcal{E}}_{n}^t +\frac{1}{\epsilon^4}\int_\Omega  \nabla \dt^{n-1} p   Q^{2,n}+ \bar{\mathcal{E}}_{n}^*+\epsilon\left( \bar{\mathcal{E}}_{n}^\sharp+2\sum_{\substack{\al\in \mathbb{N}^2\\  |\al|\le 2n}}\int_\Omega \partial^\alpha ( \rho_0   u )\cdot \partial^\alpha  \eta\right)+\epsilon^2 \mathfrak{E}_{n}\right)\nonumber
 \\&\quad + K\bar{\mathcal{D}}_{n}^t+\bar{\mathcal{D}}_{n}^*+\epsilon \bar{\mathcal{D}}_{n}^\sharp+\epsilon^2  {\mathcal{D}}_{n} \ls \frac{1}{\epsilon^4}\mathcal{Z}_n. 
\end{align}

We now define
\begin{equation}
\tilde{ \mathcal{E}}_{n}:=\frac{1}{\epsilon^4}\bar{\mathcal{E}}_{n}^t +\frac{1}{\epsilon^4}\int_\Omega  \nabla \dt^{n-1} p   Q^{2,n}+ \bar{\mathcal{E}}_{n}^*+\epsilon\left( \bar{\mathcal{E}}_{n}^\sharp+2\sum_{\substack{\al\in \mathbb{N}^2\\  |\al|\le 2n}}\int_\Omega \partial^\alpha ( \rho_0   u )\cdot \partial^\alpha  \eta\right)+\epsilon^2 \mathfrak{E}_{n} .
\end{equation}
By \eqref{p_F_e_02},
\begin{equation}
\int_\Omega  \nabla \dt^{n-1} p   Q^{2,n}\ls \sqrt{\mathcal{E}_n}\sqrt{\mathcal{E}_{N+2}\mathcal{E}_n}=\sqrt{\mathcal{E}_{N+2}}\mathcal{E}_n,
\end{equation}
together with Proposition \ref{e2nic}, we know that for fixed sufficiently small $\epsilon>0$,
\begin{equation}
\mathcal{E}_{n}\ls \tilde{ \mathcal{E}}_{n}+(\mathcal{E}_{n} )^{2},
\end{equation}
which implies that $\tilde{ \mathcal{E}}_{n}$ is equivalent to $ \mathcal{E}_{n}$ since ${\mathcal{E}}_{2N} (T)\le  \delta$ is small. We thus deduce \eqref{sys2n} and \eqref{sysn+2} from \eqref{i_te_02n2} by recalling the notation $\mathcal{Z}_n$ and using again that ${\mathcal{E}}_{2N} (T)\le \delta$ is small.
\end{proof}

\subsection{Global energy estimates}

In this subsection, we shall conclude our global energy estimates of the solution to \eqref{reformulationic}.

We begin with the estimate of $\f $ and $\mathcal{J}_{2N}$.

\begin{prop}\label{p_f_bound}
There exists a universal constant $0<\delta<1$ so that if $\mathcal{G}_{2N}(T)\le\delta$, then
\begin{equation}\label{grow1}
\f(t)
  \ls  \f(0)+  \sup_{0\le r\le t}\mathcal{E}_{2N}(r)+ \int_0^t \mathcal{D}_{2N} \text{ for all
}0\le t\le T
\end{equation}
and for any $\vartheta>0$,
\begin{equation}\label{grow2}
  \int_0^t \frac{\f+\mathcal{J}_{2N}}{(1+r)^{1+\vartheta}}dr
  \ls  \f(0)+  \sup_{0\le r\le t}\mathcal{E}_{2N}(r)+ \int_0^t \mathcal{D}_{2N} \text{ for all
}0\le t\le T.
\end{equation}
\end{prop}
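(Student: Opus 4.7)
My plan is to (i) derive the differential inequality suggested in the introduction, namely $\dtt \tilde{\mathcal{F}} + \f + \mathcal{J}_{2N} \ls \se{2N} + \sd{2N}$ for some $\tilde{\mathcal{F}}$ equivalent to $\f$ (the target inequality \eqref{ine}), and (ii) extract \eqref{grow1} and \eqref{grow2} from it via a Gronwall-type ODE argument and a time-weighted integration, respectively.

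For step (i), I will apply Stokes regularity at the top derivative level $4N+1$ to the auxiliary variable $w := u + (\kappa\bar b^2/\mu)\eta$ from Proposition \ref{p_upper_evolution  N'}, which solves the Stokes system \eqref{stokesp2}. By Lemma \ref{i_linear_elliptic2} at order $4N+1$,
\[
\norm{w}_{4N+1}^2 + \norm{\nabla p}_{4N-1}^2 \ls \norm{\Delta_\ast\eta}_{4N-1}^2 + \norm{\dt u}_{4N-1}^2 + \norm{G^1}_{4N-1}^2 + \norm{G^2}_{4N}^2 + \norm{\Phi}_{4N}^2.
\]
The first term sits inside $\se{2N}$ thanks to $\ns{\eta}_{1,4N}\subset\se{2N}$, the second lies in $\sd{2N}$, and the nonlinear pieces are bounded by $\se{N+2}(\sd{2N}+\mathcal{J}_{2N}+\f)$ through \eqref{p_G_e_001} and \eqref{p_G_e_001''}. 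Expanding $\norm{w}_{4N+1}^2 = \norm{u}_{4N+1}^2 + (\kappa\bar b^2/\mu)^2 \f + (\kappa\bar b^2/\mu)\dtt \f$ (using $\dt\eta = u$) and absorbing the $\se{N+2}(\mathcal{J}_{2N}+\f)$ contribution by smallness of $\mathcal{G}_{2N}$ then yields the desired inequality with $\tilde{\mathcal{F}} := (\kappa\bar b^2/\mu)\f$.

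For \eqref{grow1}, since $\tilde{\mathcal{F}} \sim \f$ the inequality becomes $\dtt\tilde{\mathcal{F}} + c_0 \tilde{\mathcal{F}} + \mathcal{J}_{2N} \le C(\se{2N}+\sd{2N})$ for some $c_0 > 0$. Multiplying by the integrating factor $e^{c_0 t}$ and integrating in time yields $\tilde{\mathcal{F}}(t) \ls \tilde{\mathcal{F}}(0) + \int_0^t e^{-c_0(t-r)}\se{2N}(r)\,dr + \int_0^t e^{-c_0(t-r)}\sd{2N}(r)\,dr$; the first integral is dominated by $(\sup_{[0,t]}\se{2N})/c_0$ and the second by $\int_0^t\sd{2N}$, proving \eqref{grow1}. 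For \eqref{grow2}, I will multiply the differential inequality by $(1+t)^{-1-\vartheta}$ and integrate by parts, using $\frac{d}{dr}\bigl(\tilde{\mathcal{F}}(r)(1+r)^{-1-\vartheta}\bigr) = (1+r)^{-1-\vartheta}\dtt\tilde{\mathcal{F}} - (1+\vartheta)(1+r)^{-2-\vartheta}\tilde{\mathcal{F}}$; discarding the nonnegative terms on the left and using $\int_0^\infty(1+r)^{-1-\vartheta}\,dr < \infty$ (which forces $\vartheta > 0$), the $\se{2N}$ contribution is absorbed by $\sup_{[0,t]}\se{2N}$ and the $\sd{2N}$ contribution by $\int_0^t \sd{2N}$, giving \eqref{grow2}.

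The main obstacle is step (i). One cannot apply Stokes regularity directly to the $(u,p)$ system at level $4N+1$, because the magnetic forcing $\kappa\bar b^2\p_3^2\eta$ would demand control of $\norm{\p_3^2\eta}_{4N-1}$, which is simply unavailable from the energy-dissipation structure. The variable $w$ is engineered precisely so that the forcing collapses to the horizontal Laplacian $\Delta_\ast\eta$, already controlled by $\se{2N}$. Furthermore, the nonlinear terms at the top level are only barely controllable: it is crucial that the bad factor $\sd{2N}+\mathcal{J}_{2N}+\f$ in \eqref{p_G_e_001}--\eqref{p_G_e_001''} is paired with the small multiplier $\se{N+2}$, which allows the $\mathcal{J}_{2N}+\f$ piece to be absorbed into the left-hand side.
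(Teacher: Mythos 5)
Your overall architecture --- derive $\dtt\tilde{\mathcal{F}}_{2N} + \f + \mathcal{J}_{2N} \ls \se{2N} + \sd{2N}$ via Stokes regularity for $w = u + (\kappa\bar b^2/\mu)\eta$, absorb the $\se{N+2}(\mathcal{J}_{2N}+\f)$ contribution by smallness, then run a Gronwall argument for \eqref{grow1} and a $(1+t)^{-1-\vartheta}$-weighted integration for \eqref{grow2} --- is exactly the paper's, and your step (ii) is correct. The gap is in step (i): you cannot get the differential inequality from a \emph{single} application of Lemma \ref{i_linear_elliptic2} at order $4N+1$. That estimate puts $\norm{\Delta_\ast\eta}_{4N-1}^2$ on the right, and this is \emph{not} controlled by $\ns{\eta}_{1,4N}$: the norm $\norm{\Delta_\ast\eta}_{4N-1}$ contains, for instance, $\norm{\p_3^{4N-1}\p_1^2\eta}_0$, which carries $4N-1$ vertical derivatives, whereas $\norm{\eta}_{1,4N}$ allows only one unrestricted derivative on top of horizontal ones (and $\norm{\eta}_{4N}$ stops at total order $4N$). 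The only available bound is $\norm{\Delta_\ast\eta}_{4N-1}^2 \le \f$, with an $O(1)$ constant coming from $\kappa^2\bar b^4$ and the elliptic regularity constant, and there is no reason this can be absorbed into the $(\kappa\bar b^2/\mu)^2\f$ you extract from $\norm{w}_{4N+1}^2$: no smallness is present and the constants need not line up, so the resulting inequality may be vacuous.

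The repair is the recursive anisotropic scheme already used to prove \eqref{eses22}: apply only horizontal derivatives $\p^\al$, $\al\in\mathbb{N}^2$ with $\abs{\al}\le 4N-2j-2$, to the $w$-system and invoke the elliptic estimate at order $2j+3$ (i.e., replace $2j+2$ by $2j+3$ in \eqref{hihoh}). At step $j$ the troublesome term is then $\norm{\Delta_\ast\eta}_{2j+1,4N-2j-2}^2 \ls \norm{\eta}_{2j+1,4N-2j}^2$ --- two fewer vertical and two more horizontal derivatives --- which is exactly the quantity produced at step $j-1$; the base case requires only $\norm{\eta}_{1,4N}^2\ls\se{2N}$, which is why that term, rather than $\norm{\Delta_\ast\eta}_{4N-1}^2$, appears on the right of the paper's inequality. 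Each level contributes $\norm{\eta}_{2j+3,4N-2j-2}^2$ (total order $4N+1$), and a suitably weighted sum over $j=0,\dots,2N-1$ yields an energy $\tilde{\mathcal{F}}_{2N}$ equivalent to $\f$ together with the dissipation $\f+\mathcal{J}_{2N}$. With this replacement for step (i), the rest of your argument goes through verbatim.
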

\begin{proof}
Following the arguments lead to \eqref{eses22} (basically, start with replacing $2j+2$ with $2j+3$ in \eqref{hihoh}), we deduce that there exists an energy $\tilde{\mathcal{F}}_{2N}$ which is equivalent to $\f$ such that
\begin{equation}
 \dtt\tilde{\mathcal{F}}_{2N}
+  \f +\mathcal{J}_{2N}  \ls  \norm{\eta}_{1,4N}^2+ \norm{\p_t u}_{4N-1}^2
 + \norm{G^1}_{4N-1}^2+\norm{G^2}_{4N}^2+\norm{\Phi}_{4N}^2.
\end{equation}
We use \eqref{p_G_e_001} and \eqref{p_G_e_001''} to estimate
\begin{equation}
\norm{G^1}_{4N-1}^2+\norm{G^2}_{4N}^2+\norm{\Phi}_{4N}^2\lesssim  { \se{N+2}  }(\sd{2N} +  \mathcal{J}_{2N} +\f).
\end{equation}
Then we have
\begin{equation}\label{109}
 \dtt\tilde{\mathcal{F}}_{2N}
+  \f +\mathcal{J}_{2N}  \ls  \se{2N}+ \sd{2N}
 + { \se{N+2}  } \sd{2N}\ls  \se{2N}+ \sd{2N}.
\end{equation}
since $\se{N+2}(t) \le \delta$ is small.

We now employ the time weighted analysis on \eqref{109}. First, a Gronwall type analysis on \eqref{109} yields
\begin{align}\label{1001} 
  \f  & \ls  \f(0) e^{-t}+\int_0^t e^{-(t-r)}\left(  \se{2N}(r)+ \sd{2N}(r) \right)dr\nonumber
  \\& \ls \f(0) e^{-t}+\sup_{0\le r\le t}\mathcal{E}_{2N}(r) \int_0^t e^{-(t-r)}dr+  \int_0^t \sd{2N}(r)  dr, 
\end{align}
which in particular yields \eqref{grow1}.

On the other hand, multiplying \eqref{109} by $(1+t)^{-1-\vartheta}$ for any $\vartheta>0$, we obtain
\begin{equation}\label{1110}
 \dtt \left( \frac{\tilde{\mathcal{F}}_{2N}}{(1+t)^{1+\vartheta}}\right)+(1+\vartheta)\frac{\tilde{\mathcal{F}}_{2N}}{(1+t)^{2+\vartheta}}
+  \frac{\f+\mathcal{J}_{2N}}{(1+t)^{1+\vartheta}} \ls   \frac{\se{2N}}{(1+t)^{1+\vartheta}}+  \frac{\sd{2N}}{(1+t)^{1+\vartheta}} .
\end{equation}
 Integrating \eqref{1110} directly in time yields \eqref{grow2}.
\end{proof}

 Now we show the boundedness of $\mathcal{E}_{2N} +\int_0^t\mathcal{D}_{2N} $.

\begin{prop} \label{Dgle}
There exists a universal constant $0<\delta<1$ so that if $\mathcal{G}_{2N}(T)\le\delta$, then
\begin{equation}\label{Dg}
\mathcal{E}_{2N} (t)+\int_0^t\mathcal{D}_{2N} \lesssim
\mathcal{E}_{2N} (0) + \mathcal{F}_{2N}(0)  \text{ for all
}0\le t\le T.
\end{equation}
\end{prop}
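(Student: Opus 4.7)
The plan is to mimic the argument of Proposition \ref{Dglec} in the compressible case, with the key twist that the nonlinear term on the right of \eqref{sys2n} now involves $\mathcal{J}_{2N}+\f$ rather than $\mathcal{E}_{2N}$ itself; these higher-order quantities are not bounded by $\mathcal{D}_{2N}$ pointwise in time, so the time-weighted control in \eqref{grow2} of Proposition \ref{p_f_bound} must be used in an essential way.

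First, I would integrate the differential inequality \eqref{sys2n} directly in time to obtain
\begin{equation}\nonumber
\tilde{\mathcal{E}}_{2N}(t)+\int_0^t \mathcal{D}_{2N}(r)\,dr
\ls \tilde{\mathcal{E}}_{2N}(0)+\int_0^t \sqrt{\se{N+2}(r)}\,(\mathcal{J}_{2N}(r)+\f(r))\,dr.
\end{equation}
Since $\tilde{\mathcal{E}}_{2N}$ is equivalent to $\mathcal{E}_{2N}$, the left-hand side controls $\mathcal{E}_{2N}(t)+\int_0^t\mathcal{D}_{2N}$ up to a constant. The standing assumption $\mathcal{G}_{2N}(T)\le\delta$ includes $(1+r)^{2N-4}\se{N+2}(r)\le\delta$, which yields the pointwise decay
\begin{equation}\nonumber
\sqrt{\se{N+2}(r)}\le \sqrt{\delta}\,(1+r)^{-(N-2)}.
\end{equation}

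Second, since $N\ge 4$, I would pick the weight exponent $\vartheta=N-3$ (any $0<\vartheta\le N-3$ works), so that $1+\vartheta\le N-2$ and hence $(1+r)^{-(N-2)}\le (1+r)^{-(1+\vartheta)}$. This reduces the nonlinear integral to
\begin{equation}\nonumber
\int_0^t \sqrt{\se{N+2}}\,(\mathcal{J}_{2N}+\f)\,dr
\le \sqrt{\delta}\int_0^t \frac{\mathcal{J}_{2N}(r)+\f(r)}{(1+r)^{1+\vartheta}}\,dr,
\end{equation}
which, by \eqref{grow2} of Proposition \ref{p_f_bound}, is bounded by
$\sqrt{\delta}\bigl(\f(0)+\sup_{0\le r\le t}\mathcal{E}_{2N}(r)+\int_0^t\mathcal{D}_{2N}\bigr).$

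Third, I would collect the estimates, take the supremum over $[0,t]$ of the resulting inequality, and absorb the $\sqrt{\delta}\bigl(\sup_r\mathcal{E}_{2N}+\int_0^t\mathcal{D}_{2N}\bigr)$ term into the left-hand side by choosing $\delta$ universally small. Noting that $\tilde{\mathcal{E}}_{2N}(0)\ls \mathcal{E}_{2N}(0)$, this yields exactly \eqref{Dg}.

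The main subtlety, more than a real obstacle, is the fact that the nonlinear forcing in \eqref{sys2n} contains $\mathcal{J}_{2N}+\f$, which are not controlled by $\mathcal{D}_{2N}$; the whole mechanism therefore hinges on the weighted-in-time control \eqref{grow2} derived from Proposition \ref{p_f_bound}, together with the a priori decay rate of $\se{N+2}$ built into $\mathcal{G}_{2N}$. The choice $\vartheta=N-3$ is the unique place where the hypothesis $N\ge 4$ is used, since it guarantees that the decay rate $(1+r)^{-(N-2)}$ of $\sqrt{\se{N+2}}$ is at least as fast as an integrable weight $(1+r)^{-(1+\vartheta)}$ with $\vartheta>0$.
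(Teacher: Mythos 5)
Your proposal is correct and follows essentially the same route as the paper: integrate \eqref{sys2n} in time, use the decay $\sqrt{\se{N+2}(r)}\le\sqrt{\delta}(1+r)^{-N+2}$ together with $N-2\ge 1+\vartheta$ to reduce the nonlinear integral to the weighted integral controlled by \eqref{grow2}, and then absorb the resulting $\sqrt{\delta}$-terms. Your extra care in phrasing the argument through the equivalent energy $\tilde{\mathcal{E}}_{2N}$ and in taking the supremum before absorbing is a harmless refinement of what the paper does implicitly.
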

\begin{proof}
Integrating \eqref{sys2n} directly in time, we find that
\begin{equation}
  {\mathcal{E}}_{2N}
+ \int_0^t{\mathcal{D}}_{2N}
\ls {\mathcal{E}}_{2N}(0)+\int_0^t \sqrt{ \se{N+2}  }(  \mathcal{J}_{2N} +\f)
\end{equation}
By the estimates \eqref{grow2} of Proposition \ref{p_f_bound}, we deduce
\begin{align} 
  {\mathcal{E}}_{2N}
+ \int_0^t{\mathcal{D}}_{2N}
&\ls {\mathcal{E}}_{2N}(0)+ \int_0^t \sqrt{\delta}(1+r)^{-N+2}(  \mathcal{J}_{2N} +\f)dr\nonumber
\\& \ls {\mathcal{E}}_{2N}(0)+ \sqrt{ \delta } \left(\f(0)+  \sup_{0\le r\le t}\mathcal{E}_{2N}(r)+ \int_0^t \mathcal{D}_{2N}\right)
. 
\end{align}
Here we have used the fact that $N-2\ge 1+\vartheta$. This proves the estimate \eqref{Dg} since $\delta$ is small.
\end{proof}

It remains to show the decay estimates of
$\mathcal{E}_{N+2}$.

\begin{prop} \label{decaylm}
There exists a universal constant $0<\delta<1$ so that if $\mathcal{G}_{2N}(T)\le\delta$, then
\begin{equation}\label{n+2}
(1+t)^{2N-4} \mathcal{E}_{N+2} (t)\lesssim
\mathcal{E}_{2N} (0)+ \mathcal{F}_{2N}(0) \ \text{for all
}0\le t\le T.
\end{equation}
\end{prop}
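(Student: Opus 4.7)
The plan is to mimic the argument used for the compressible analogue Proposition \ref{decaylmc}. The starting point is the differential inequality \eqref{sysn+2}, $\frac{d}{dt}\tilde{\mathcal{E}}_{N+2} + \mathcal{D}_{N+2} \le 0$, so the main task is to bound $\mathcal{E}_{N+2}$ by $\mathcal{D}_{N+2}$ to a power $\theta < 1$, up to a factor that stays uniformly bounded in time, and then close a Bernoulli-type ODE for $\tilde{\mathcal{E}}_{N+2}$.

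First I would compare $\mathcal{E}_{N+2}$ and $\mathcal{D}_{N+2}$ term by term. For $j\ge 1$, we have $\norm{\dt^j u}_{2(N+2)-2j}^2 \le \norm{\dt^j u}_{2(N+2)-2j+1}^2$ and $\norm{\nabla\dt^j p}_{2(N+2)-2j-2}^2 \le \norm{\nabla\dt^j p}_{2(N+2)-2j-1}^2$, both of which sit inside $\mathcal{D}_{N+2}$. The remaining pieces $\norm{u}_{2(N+2)}^2$, $\norm{\nabla p}_{2(N+2)-2}^2$, $\norm{\eta}_{2(N+2)}^2$, and the $\partial_3$-part of $\norm{\eta}_{1,2(N+2)}^2$ appear directly in $\mathcal{D}_{N+2}$ (the last as $\norm{\partial_3 \eta}_{0,2(N+2)}^2$). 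The only component of $\mathcal{E}_{N+2}$ that is not immediately controlled is the pure horizontal top-order contribution $\norm{\nabla_\ast^{2(N+2)+1}\eta}_0^2$ inside $\norm{\eta}_{1,2(N+2)}^2$. I would handle this lone term by horizontal Sobolev interpolation, exactly as in \eqref{intep0c}:
\[
\norm{\nabla_\ast^{2N+5}\eta}_0 \le \norm{\nabla_\ast^{2N+4}\eta}_0^{\theta}\norm{\nabla_\ast^{4N+1}\eta}_0^{1-\theta}, \qquad \theta = \frac{2N-4}{2N-3}.
\]
The first factor is controlled by $(\mathcal{D}_{N+2})^{1/2}$ (via $\norm{\eta}_{2(N+2)}^2 \le \mathcal{D}_{N+2}$) and the second by $(\mathcal{F}_{2N})^{1/2}$. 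Combining with the direct controls on all other pieces yields
\[
\mathcal{E}_{N+2} \le C (\mathcal{D}_{N+2})^{\theta}(\mathcal{E}_{2N} + \mathcal{F}_{2N})^{1-\theta}.
\]

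Next, from Proposition \ref{Dgle} together with the inequality \eqref{grow1} of Proposition \ref{p_f_bound}, I would deduce the uniform bound $\sup_{0\le r\le t}(\mathcal{E}_{2N} + \mathcal{F}_{2N})(r) \lesssim \mathcal{M}_0 := \mathcal{E}_{2N}(0) + \mathcal{F}_{2N}(0)$. Substituting into \eqref{sysn+2} and using the equivalence $\tilde{\mathcal{E}}_{N+2} \sim \mathcal{E}_{N+2}$ gives
\[
\frac{d}{dt}\tilde{\mathcal{E}}_{N+2} + \frac{C}{\mathcal{M}_0^{s}}(\tilde{\mathcal{E}}_{N+2})^{1+s} \le 0, \qquad s = \frac{1}{\theta}-1 = \frac{1}{2N-4}.
\]
Solving this differential inequality directly and invoking $\tilde{\mathcal{E}}_{N+2}(0) \lesssim \mathcal{M}_0$ yields $\tilde{\mathcal{E}}_{N+2}(t) \lesssim \mathcal{M}_0/(1+t)^{1/s} = \mathcal{M}_0/(1+t)^{2N-4}$, which is exactly \eqref{n+2}. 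The only novelty compared to the compressible case is that the interpolation must be closed with $\mathcal{F}_{2N}$, not $\mathcal{E}_{2N}$, at the very top horizontal order of $\eta$; this is precisely why Proposition \ref{p_f_bound} was engineered to control $\sup_t \mathcal{F}_{2N}$ in terms of the initial data, so no additional obstacle arises.
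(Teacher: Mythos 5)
Your proposal is correct and follows essentially the same route as the paper, whose proof of this proposition simply defers to the compressible argument of Proposition \ref{decaylmc}: you correctly isolate $\ns{\nabla_\ast^{2(N+2)+1}\eta}_{0}$ as the only piece of $\se{N+2}$ not absorbed by $\sd{N+2}$, interpolate it with the exponent $\theta=(2N-4)/(2N-3)$, and close the Bernoulli-type ODE using the uniform bound $\sup_t(\se{2N}+\f)\ls \se{2N}(0)+\f(0)$ from Propositions \ref{Dgle} and \ref{p_f_bound}. (The only cosmetic difference is that one could take the high-order interpolation factor from $\se{2N}$ directly via $\norm{\nabla_\ast^{4N+1}\eta}_0^2\ls\ns{\eta}_{1,4N}$ rather than from $\f$, but this changes nothing in the conclusion.)
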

\begin{proof}
The proposition follows essentially in the same way as Proposition \ref{decaylmc}.
\end{proof}

Now we can arrive at our ultimate energy estimates for
$\mathcal{G}_{2N} $.
\begin{thm}\label{Ap}
There exists a universal $0 < \delta < 1$ so that if $
\mathcal{G}_{2N} (T) \le \delta$, then
\begin{equation}\label{Apriori}
 \mathcal{G}_{2N} (t) \ls\mathcal{E}_{2N} (0)+ \mathcal{F}_{2N}(0) \text{ for all }0 \le t \le
 T.
\end{equation}
\end{thm}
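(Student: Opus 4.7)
The plan is to obtain Theorem \ref{Ap} as a straightforward bookkeeping synthesis of the three preceding propositions (Proposition \ref{p_f_bound}, Proposition \ref{Dgle} and Proposition \ref{decaylm}), with no new analytic input. Recalling the definition \eqref{G_def}, the quantity $\mathcal{G}_{2N}(t)$ is the sum of five pieces:
\begin{equation*}
\sup_{0\le r\le t}\mathcal{E}_{2N}(r),\quad \int_0^t \mathcal{D}_{2N}(r)\,dr,\quad \sup_{0\le r\le t}(1+r)^{2N-4}\mathcal{E}_{N+2}(r),\quad \sup_{0\le r\le t}\mathcal{F}(r),\quad \int_0^t \frac{\mathcal{J}_{2N}(r)}{(1+r)^{1+\vartheta}}\,dr,
\end{equation*}
and each piece is already individually controlled by one of the three propositions.

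First I would invoke Proposition \ref{Dgle} to bound the first two pieces directly by $\mathcal{E}_{2N}(0)+\mathcal{F}_{2N}(0)$ via \eqref{Dg}. Second, Proposition \ref{decaylm} gives $\sup_{0\le r\le t}(1+r)^{2N-4}\mathcal{E}_{N+2}(r)\lesssim \mathcal{E}_{2N}(0)+\mathcal{F}_{2N}(0)$ for the third piece. Third, for the remaining two pieces I would feed \eqref{Dg} into the right-hand sides of \eqref{grow1} and \eqref{grow2} of Proposition \ref{p_f_bound}; this replaces the auxiliary quantities $\sup_{0\le r\le t}\mathcal{E}_{2N}(r) + \int_0^t \mathcal{D}_{2N}$ appearing there with $\mathcal{E}_{2N}(0)+\mathcal{F}_{2N}(0)$, yielding the desired universal bounds on $\sup_{0\le r\le t}\mathcal{F}(r)$ and $\int_0^t \mathcal{J}_{2N}(r)/(1+r)^{1+\vartheta}\,dr$. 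Summing the five resulting estimates produces \eqref{Apriori}.

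The only thing that needs a moment's verification is that no circularity sneaks in. Proposition \ref{Dgle} estimates $\mathcal{E}_{2N}$ by absorbing $\sqrt{\delta}$ times $\sup_{0\le r\le t}\mathcal{E}_{2N}(r)+\int_0^t\mathcal{D}_{2N}$ into the left-hand side; this step uses Proposition \ref{decaylm} to integrate the factor $\sqrt{\mathcal{E}_{N+2}}\lesssim\sqrt{\delta}(1+r)^{-N+2}$ against $\mathcal{J}_{2N}+\mathcal{F}$, which is integrable thanks to the choice $0<\vartheta\le N-3$ with $N\ge 4$. Therefore \eqref{Dg} is self-contained modulo smallness of $\delta$, and the substitution described above does not reintroduce the unknowns. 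Accordingly, the theorem follows by simply adding the bounds; there is no genuine obstacle at this final stage, as all difficulties have been dispatched upstream.
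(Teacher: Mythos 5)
Your proof is correct and takes essentially the same route as the paper, which likewise obtains Theorem \ref{Ap} by summing the conclusions of Propositions \ref{p_f_bound}--\ref{decaylm} over the five pieces of $\mathcal{G}_{2N}$ in \eqref{G_def}, feeding \eqref{Dg} into \eqref{grow1} and \eqref{grow2}. (One small note on your circularity check: the decay factor $\sqrt{\se{N+2}}\ls\sqrt{\delta}(1+r)^{-N+2}$ used inside the proof of Proposition \ref{Dgle} comes from the bootstrap hypothesis $\mathcal{G}_{2N}(T)\le\delta$ rather than from Proposition \ref{decaylm}, which is why no circularity arises.)
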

\begin{proof}
The conclusion follows directly from the definition of
$\mathcal{G}_{2N} $ and Propositions
\ref{p_f_bound}--\ref{decaylm}.
\end{proof}

\appendix

\section{Elliptic regularity}\label{section_appendix}

We first recall the classical regularity theory for the Lam$\acute{e}$ system:
\begin{equation}\label{lame eq}
\begin{cases}
-\mu\Delta u -(\mu+\mu')\nabla \diverge u =f  \quad &\hbox{in }\Omega
\\u=0\quad &\hbox{on }\pa \Omega.
\end{cases}
\end{equation}
\begin{lemma}\label{i_linear_lame}
Let $r\ge 2$. If $f\in H^{r-2}(\Omega)$, then there exists unique $u\in H^r(\Omega)$ solving \eqref{lame eq}. Moreover,
\begin{equation}
\norm{u}_{r}\lesssim\norm{f}_{r-2}.
\end{equation}
\end{lemma}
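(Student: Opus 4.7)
The plan is to combine Lax--Milgram for existence and uniqueness with a bootstrap that exploits the flat-slab geometry of $\Omega=\mathbb{R}^2\times(0,1)$. On $H_0^1(\Omega)^3$ I would introduce the symmetric bilinear form
\begin{equation*}
B(u,v):=\mu\int_\Omega \nabla u:\nabla v\,dx+(\mu+\mu')\int_\Omega (\diverge u)(\diverge v)\,dx.
\end{equation*}
The physical hypothesis \eqref{visco} forces $\mu+\mu'\ge\frac{1}{3}\mu>0$, so $B(u,u)\ge\mu\norm{\nabla u}_0^2$; Poincar\'e's inequality on the slab (both walls of $\partial\Omega$ carry the homogeneous Dirichlet data) then makes $B$ coercive, and Lax--Milgram yields a unique weak solution $u\in H_0^1(\Omega)^3$ with $\norm{u}_1\ls\norm{f}_{-1}$, covering in particular the base case $f\in L^2$.

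Next, for $H^2$ regularity I would use the horizontal difference quotients $\tau_h^i u:=(u(\cdot+he_i)-u)/h$, $i=1,2$. These preserve the Dirichlet condition because $\partial\Omega$ is horizontal, commute with the constant-coefficient Lam\'e operator, and testing the shifted equation against $\tau_h^i u$ produces a bound uniform in $h$, yielding $\partial_i u\in H_0^1$ for $i=1,2$, i.e., $\norm{u}_{1,1}\ls\norm{f}_0$. The remaining second vertical derivative is recovered algebraically from the three scalar equations of the system. The third component reads
\begin{equation*}
-(2\mu+\mu')\partial_3^2 u_3=f_3+\mu\Delta_\ast u_3+(\mu+\mu')\partial_3\diverge_\ast u_\ast,
\end{equation*}
and because $2\mu+\mu'\ge\frac{4}{3}\mu>0$ I can solve for $\partial_3^2 u_3$ in terms of $f$ and quantities bounded by $\norm{u}_{1,1}$. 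The first two components similarly give $-\mu\partial_3^2 u_j=f_j+\mu\Delta_\ast u_j+(\mu+\mu')\partial_j(\diverge_\ast u_\ast+\partial_3 u_3)$ for $j=1,2$, with the same conclusion since $\mu>0$. Assembling, $\norm{u}_2\ls\norm{f}_0$.

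Higher regularity then follows by induction on $r$. For $f\in H^{r-2}$, applying the $H^2$ estimate to $\partial^\alpha u$ for every horizontal $\alpha\in\mathbb{N}^2$ with $|\alpha|\le r-2$ (legitimate since $\partial^\alpha$ preserves both the equation and the boundary condition) yields $\norm{u}_{2,r-2}\ls\norm{f}_{r-2}$, and the remaining derivatives of total order $\le r$ with three or more vertical factors are recovered by differentiating the two algebraic identities above in $\partial_3^k$, $1\le k\le r-2$, using the horizontal-plus-two-vertical estimates already in hand. The only mildly delicate point is a coefficient check: both leading vertical coefficients $2\mu+\mu'$ and $\mu$ must be strictly positive, which is exactly what the viscosity hypothesis \eqref{visco} guarantees and which underpins both the coercivity of $B$ and the algebraic recovery of $\partial_3^2 u$. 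Beyond this bookkeeping and a routine induction, the argument is standard flat-boundary elliptic theory for the Lam\'e system.
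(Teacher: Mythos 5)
Your argument is correct, but it is not the route the paper takes: the paper's entire ``proof'' of Lemma \ref{i_linear_lame} is a citation of Agmon--Douglis--Nirenberg \cite{ADN}, i.e.\ the general elliptic regularity theory for systems satisfying the complementing condition with Dirichlet data. What you do instead is a self-contained, hands-on proof exploiting the special geometry $\Omega=\mathbb{R}^2\times(0,1)$: Lax--Milgram with the coercivity $B(u,u)\ge\mu\norm{\nabla u}_0^2$ (valid since \eqref{visco} gives $\mu+\mu'\ge\tfrac13\mu>0$) plus Poincar\'e on the slab for existence and uniqueness; horizontal difference quotients, which are legitimate precisely because translations in $e_1,e_2$ preserve both the slab and the Dirichlet condition, to control all derivatives carrying at least one horizontal factor; and the algebraic identities $-(2\mu+\mu')\partial_3^2u_3=f_3+\mu\Delta_\ast u_3+(\mu+\mu')\partial_3\diverge_\ast u_\ast$ and $-\mu\partial_3^2u_j=f_j+\mu\Delta_\ast u_j+(\mu+\mu')\partial_j\diverge u$ to recover the pure vertical derivatives, with the leading coefficients $2\mu+\mu'\ge\tfrac43\mu$ and $\mu$ strictly positive again by \eqref{visco}; higher $r$ then follows by the induction you describe (tangential differentiation of the boundary value problem plus repeated $\partial_3$-differentiation of the two identities). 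Your approach buys a proof that avoids verifying the Lopatinskii--Shapiro condition and is genuinely adapted to the unbounded flat layer, where a naive appeal to ADN (usually stated for bounded domains) would in any case require a uniform localization argument; the paper's citation buys brevity and generality. The only points worth tightening are routine: the difference-quotient test function should be $\tau^i_{-h}\tau^i_h u$ rather than $\tau^i_h u$, and one should note that the $H^2$ estimate may be applied to $\partial^\alpha u$ because iterated difference quotients show a priori that $\partial^\alpha u\in H^1_0$. Neither affects the validity of the argument.
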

\begin{proof}
 See \cite{ADN}.
\end{proof}

We next recall the classical regularity theory for the Stokes system:
\begin{equation}\label{stokes eq}
\begin{cases}
-\mu\Delta u +\nabla p =f  \quad &\hbox{in }\Omega
\\\diverge{u} =g  \quad  &\hbox{in }\Omega
\\u=0\quad &\hbox{on }\pa \Omega.
\end{cases}
\end{equation}
\begin{lemma}\label{i_linear_elliptic2}
Let $r\ge 2$. If $f\in H^{r-2}(\Omega)$, $g\in H^{r-1}(\Omega)$
and  $(u,p) $ solves \eqref{stokes eq}, then
\begin{equation}\label{stokes es}
\norm{u}_{r}+\norm{\nabla p}_{r-2}\lesssim\norm{f}_{r-2}+\norm{g}_{r-1}+\norm{u}_0.
\end{equation}
\end{lemma}
\begin{proof}
See \cite{L,T}.
\end{proof}%
\begin{remark}
	Note that  to guarantee the existence of the unique solution to  \eqref{stokes eq} as stated in Lemma \ref{i_linear_elliptic2}, we may need to impose the following structure condition of $g$:
	\begin{equation}\label{structure}
	g=\diverge \varphi\text{ with }\varphi\in L^2(\Omega),\ \varphi_3=0\text{ on }\pa\Omega.
	\end{equation}
	Moreover, $\norm{u}_0$ in the right hand side of the estimate \eqref{stokes es} can then be replaced by $\norm{\varphi}_0$. Although the Stokes problems employed in this paper satisfy the structure condition \eqref{structure}, we do not pursue such sort of estimates since we have already controlled $\norm{u}_0$ in our applications.
\end{remark}

 \section*{Acknowledgements}

The authors are deeply grateful to the referees for the invaluable comments and suggestions.

    \end{document}